\theoremstyle{plain} 
\newtheorem{thm}{Theorem}[section]
\newtheorem{prop}[thm]{Proposition}
\newtheorem{lem}[thm]{Lemma}
\theoremstyle{definition}
\newtheorem{defn}[thm]{Definition}
\newtheorem{rem}[thm]{Remark}
\newtheorem{ex}[thm]{Example}
\numberwithin{equation}{section}
\theoremstyle{plain}
\newenvironment{customthm}[1]
  {\innercustomthm}
  {\endinnercustomthm}
\renewcommand{\theta}{\vartheta}
\renewcommand{\phi}{\varphi}
\renewcommand{\epsilon}{\varepsilon}
\renewcommand{\subset}{\subseteq}
\renewcommand{\supset}{\supseteq}
\newcommand{\ttimes}{\mathbin{\ooalign{$\times$\cr\kern.2ex\raise.2ex\hbox{$\times$}}}}
\newcommand{\timess}{\mathbin{\ooalign{\kern.2ex$\times$\cr\raise.2ex\hbox{$\times$}}}}
\newcommand{\N}{\mathbb N}
\newcommand{\Z}{\mathbb Z}
\newcommand{\C}{\mathbb C}
\DeclareMathOperator{\Mor}{Mor}
\DeclareMathOperator{\Rep}{Rep}
\DeclareMathOperator{\Reptil}{\widetilde{Rep}}
\DeclareMathOperator{\spanlin}{span}
\DeclareMathOperator{\Alt}{Alt}
\DeclareMathOperator{\rl}{rl}
\newcommand{\even}{_{\mathrm{even}}}
\newcommand{\twocol}{^{\wcol\bcol}}
\newcommand{\nlin}{_{N\mathchar `\-\mathrm{lin}}}
\newcommand{\nnnlin}{_{(N-1)\mathchar `\-\mathrm{lin}}}
\newcommand{\Lin}{\mathscr{L}}
\newcommand{\Cat}{\mathscr{C}}
\newcommand{\Kat}{\mathscr{K}}
\newcommand{\V}{\mathcal{V}}
\newcommand{\U}{\mathcal{U}}
\newcommand{\PP}{\mathcal{P}}
\newcommand{\Part}{\mathscr{P}}
\newcommand{\Alphabet}{\mathscr{A}}
\newcommand{\Blphabet}{\mathscr{B}}
\newcommand{\tiltimes}{\mathbin{\tilde\times}}
\newcommand{\tilstar}{\mathbin{\tilde *}}
\newcolumntype{P}[1]{>{\centering\arraybackslash}p{#1}}
\newcommand{\lincol}{\kern0.1ex%
	\pgfpicture
	\pgfsetlinewidth{\Pwidth em}
	\pgfpathmoveto{\pgfpointorigin}
	\pgfpathlineto{\pgfpoint{0em}{0.5em}}
	\pgfusepath{stroke}
	\endpgfpicture\kern0.1ex}
\newcommand{\Psa}{%
	\Partition{
	\Ut (1,1)
	\Psingletons0to0.3:1
	}}
\newcommand{\Pabcacb}{\Partition{
	\Pline (3,1) (1,0)
	\Pline (1,1) (2,0)
	\Pline (2,1) (3,0)
}}
\newcommand{\Pabcbac}{\Partition{
	\Pline (3,1) (2,0)
	\Pline (2,1) (1,0)
	\Pline (1,1) (3,0)
}}
\newcommand{\dotcol}{\kern0.1ex
	\pgfpicture
	\pgfsetlinewidth{\Pwidth em}
	\pgfpathmoveto{\pgfpointorigin}
	\pgfpathlineto{\pgfpoint{0em}{0.5em}}
	\pgfsetdash{{\pgflinewidth}{0.1em}}{0.1em}
	\pgfusepath{stroke}
	\endpgfpicture\kern0.1ex}
\def\PDline(#1,#2)#3(#4,#5){
\pgfpathmoveto{\pgfpointxy{#1}{#2}}
\pgfpathlineto{\pgfpointxy{#4}{#5}}
\pgfsetdash{{\pgflinewidth}{1pt}}{0pt}
\pgfusepath{stroke}
\pgfsetdash{}{0pt}
}
\newcommand{\PDabcb}{\Partition{
\Psingletons 0to0.3:1
\Psingletons 1to0.7:2
\PDline(2,0)(1,1)
}}
\newcommand{\PDabcdcb}{\Partition{
\Psingletons 0to0.3:1
\Psingletons 1to0.7:3
\PDline(2,0)(1,1)
\PDline(3,0)(2,1)
}}
\newcommand{\PDabcabc}{\Partition{
\PDline (1,0) (3, 1)
\PDline (2,0) (2, 1)
\PDline (3,0) (1, 1)
}}
\newcommand{\PDabcabd}{\Partition{
\PDline (1,0) (3, 1)
\PDline (2,0) (2, 1)
\Psingletons 0to0.3:3
\Psingletons 1to0.7:1
}}
\newcommand{\PDabcadc}{\Partition{
\PDline (1,0) (3, 1)
\PDline (3,0) (1, 1)
\Psingletons 0to0.2:2
\Psingletons 1to0.8:2
}}
\newcommand{\PDabcdbc}{\Partition{
\PDline (2,0) (2, 1)
\PDline (3,0) (1, 1)
\Psingletons 0to0.3:1
\Psingletons 1to0.7:3
}}
\newcommand{\Pabcdcb}{\Partition{
\Psingletons 0to0.3:1
\Psingletons 1to0.7:3
\Pline(2,0)(1,1)
\Pline(3,0)(2,1)
}}
\newcommand{\Pabcdce}{\Partition{
\Psingletons 0to0.3:1,3
\Psingletons 1to0.7:2,3
\Pline(2,0)(1,1)
}}
\newcommand{\Pabcdeb}{\Partition{
\Psingletons 0to0.3:1,2
\Psingletons 1to0.7:1,3
\Pline(3,0)(2,1)
}}
\begin{document}
\title{New products and $\Z_2$-extensions of compact matrix quantum groups}
\author{Daniel Gromada and Moritz Weber}
\address{Saarland University, Fachbereich Mathematik, Postfach 151150,
66041 Saarbr\"ucken, Germany}
\email{gromada@math.uni-sb.de}
\email{weber@math.uni-sb.de}
\date{\today}
\subjclass[2010]{20G42 (Primary); 05A18, 18D10 (Secondary)}
\keywords{quantum group product, two-colored partitions, category of partitions, compact quantum group, tensor category}
\thanks{Both authors were supported by the collaborative research centre SFB-TRR 195 ``Symbolic Tools in Mathematics and their Application''. The second author was also supported by the DFG project ``Quantenautomorphismen von Graphen''. The article is part of the first author's PhD thesis.}
\thanks{We thank Amaury Freslon for discussions on the topic, reading a draft of the article and providing helpful comments.}

\begin{abstract}
There are two very natural products of compact matrix quantum groups: the tensor product $G\times H$ and the free product $G*H$. We define a number of further products interpolating these two. We focus more in detail to the case where $G$ is an easy quantum group and $H=\hat\Z_2$, the dual of the cyclic group of order two. We study subgroups of $G*\hat\Z_2$ using categories of partitions with extra singletons. Closely related are many examples of non-easy bistochastic quantum groups.
\end{abstract}

\maketitle
\section*{Introduction}

Quantum groups are a generalization of the concept of a group in non-commutative geometry. In this work, we deal with compact quantum groups as defined by Woronowicz in \cite{Wor87}. One of the main motivations of Woronowicz was to describe the quantum deformation of the special unitary groups. Nevertheless, a lot of effort was put recently in study of quantum groups arising not by deforming commutation relations, but rather by removing or liberating them. First examples in this direction were the free orthogonal, free unitary and free symmetric groups defined by Wang \cite{Wan95free,Wan98}. A new approach for the construction of such quantum groups was introduced by Banica and Speicher in \cite{BS09} inventing the so-called {\em easy quantum groups}. We extend their work in the following sense.

A classical problem in many fields of algebra is the one of constructing extensions. In group theory, for example, the simplest construction considering two groups $G$ and $H$ is to construct their direct product $G\times H$. In the theory of quantum groups, we can repeat this construction and consider the so-called {\em tensor product} $G\times H$ \cite{Wan95tensor}, which is basically the same as the direct product of groups. Another possibility, however, is to liberate the C*-algebra multiplication and consider the {\em free product} $G*H$ \cite{Wan95free}. A natural question now is:
\begin{quote}
Are there any quantum groups between the tensor product and the free product?
\end{quote}

The difference between the quantum case and classical case is that we ``couple'' the quantum groups $G$ and $H$ not only using the group multiplication (here Woronowicz C*-algebra comultiplication) but also using the C*-algebra multiplication. In our work, we focus purely on non-commutative generalizations of the direct product. That is, we study products of quantum groups $G$ and $H$, where the factors $C(G)$ and $C(H)$ mutually cocommute, but not necessarily commute. So far the only results in this direction, which are known to us, are the above mentioned tensor and free products defined by Wang \cite{Wan95free,Wan95tensor}. Note that there are many results concerning generalizations of the group semidirect product \cite{Maj87,Maj89,VV03,BV05,MRW17} and wreath product \cite{Bic04,FS18}. Let us also mention here the glued product construction \cite{TW17,CW16}.

We focus in particular on extensions of orthogonal quantum groups $G\subset O_N^+$ by the group $\Z_2$. Our main tool for studying such extensions are categories of colored partitions. In the work of Banica and Speicher \cite{BS09}, certain combinatorial structures called categories of partitions are used to model the representation theory of a subclass of orthogonal quantum groups $S_N\subset G\subset O_N^+$, the so-called easy quantum groups. Using Woronowicz's Tannaka--Krein duality for compact quantum groups \cite{Wor88}, one is able to recover the compact matrix quantum group given the representation category of its fundamental representation. Thus, constructing examples of categories of partitions leads to examples of quantum groups. This approach was generalized by Freslon \cite{Fre17}, who introduced colored partitions to be able to describe more than one representation using partitions. We adapt this approach to the situation, where one of the representations is the one-dimensional representation of $\Z_2$. The structure we define in this way is called {\em category of partitions with extra singletons}. Examples of such structures induce examples of quantum groups $G$ such that $S_N\times E\subset G\subset O_N^+*\hat\Z_2$, where $E$ is the trivial group.

In Section \ref{sec.classification}, we reveal a correspondence between categories of partitions with extra singletons and two-colored categories of partitions that were used to describe unitary quantum groups \cite{TW17}.

\begin{customthm}{A}[Theorem \ref{T.F}]
\label{T.A}
The functor $F$ from Definition \ref{D.F} provides a one-to-one correspondence between categories of partitions with extra singletons of even length and categories of ``unitary'' two-colored partitions that are invariant with respect to the color inversions.
\end{customthm}

There are several classification results available for unitary two-colored partitions \cite{TW18,Gro18,MW18,MW19,MW19b}. Thanks to this correspondence, those classification results transfer to the case of partitions with extra singletons and hence provide many examples of $\Z_2$-extensions of orthogonal quantum groups. We summarize those classification results in Section \ref{sec.ex}. Closely related to our classification result is the work of Freslon \cite{Fre19}, see also Remark \ref{R.Fre}.

%In \cite{TW17,Gro18}, it was shown that using certain two-colored partitions we can construct the category corresponding to the free glued product or tensor glued product of a given orthogonal easy quantum group $G$ with $\hat\Z_k$. In Section \ref{sec.prods}, we use our functor $F$ to construct the corresponding partitions with extra singletons. The corresponding relations provide us a definition of quantum groups we denote $G\ttimes\hat\Z_2$, $G\times_k\hat\Z_2$, $G*_k\hat\Z_2$ that interpolate $G\times\hat\Z_2$ and $G*\hat\Z_2$.

In Section \ref{sec.prods}, we return to the general question on the existence of quantum groups interpolating the free and tensor product. We bring a very general construction in Definition \ref{D.prods}. Given quantum groups $G=(C(G),u)$ and $H=(C(H),v)$, we define the following quantum subgroups of $G*H$. The product $G\ttimes H$ is defined by the relations
$$ab^*x=xab^*,\qquad a^*bx=xa^*b$$
the product $G\timess H$ by the relation
$$ax^*y=x^*ya,\qquad axy^*=xy^*a$$
the product $G\times_0 H$ by the combination of the both relations and finally, given $k\in\N$, the product $G\times_{2k}H$ by the relation
$$a_1x_1\cdots a_kx_k=x_1a_1\cdots x_ka_k,$$
where $a,b,a_1,\dots,a_k\in\{u_{ij}\}$ and $x,y,x_1,\dots,x_k\in\{v_{ij}\}$.

The following theorem then proves that we indeed construct something new lying strictly between the free and the tensor product.
\begin{customthm}{B}[Theorem \ref{T.prods}]
\label{T.B}
Consider quantum groups $G,H$. We have the following inclusions
$$G*H\kern1ex\begin{matrix}\lower.3ex\hbox{\rotatebox{15}{$\supset$}} & G\ttimes H & \rotatebox{345}{$\supset$}\\\raise.3ex\hbox{\rotatebox{345}{$\supset$}}  & G\timess H&\rotatebox{15}{$\supset$}\end{matrix}\kern1ex G\times_0 H\supset G\times_{2k}G\supset G\times_{2l}H\supset G\times_2H= G\times H,$$
where we assume $k,l\in\N$ such that $l$ divides $k$. The last three inclusions are strict if and only if the degree of reflection of both $G$ and $H$ is different from one.
\end{customthm}

The rest of Section \ref{sec.prods} is devoted to studying those products in the special case when $G$ is an orthogonal easy quantum group and $H=\hat\Z_2$ linking it to the previously developed theory of partitions with extra singletons.

Finally, let us mention the special case of quantum groups $G$ between $O_N\times E$ and $O_N^+*\Z_2$, which is studied in Subsection \ref{secc.pair}. Note that the group $O_N\times E$ is similar to the bistochastic group $B_{N+1}$ and the quantum group $O_N^+*\Z_2$ is similar to $B_{N+1}^{\#+}$. Thus, such instances $G$ with $O_N\times E\subset G\subset O_N^+*\Z_2$ are similar to quantum groups $\tilde G$ with $B_{N+1}\subset\tilde G\subset B_{N+1}^{\#+}$. The quantum groups $B_{N+1}$ and $B_{N+1}^{\#+}$ are easy quantum groups and correspond to the categories $\langle\singleton,\crosspart\rangle$, resp. $\langle \singleton\otimes\singleton\rangle$. However, not every quantum group lying between $B_{N+1}$ and $B_{N+1}^{\#+}$ is described by a category of partitions, i.e.\ not all of them are easy quantum groups. To describe also non-easy quantum groups, we have to deal with linear combinations of partitions as explained in \cite{GW18}. In particular, to interpret the examples of quantum groups $O_N\times E\subset G\subset O_N^+*\Z_2$ arising from partitions with extra singletons as quantum groups $B_{N+1}\subset \tilde G\subset B_{N+1}^{\#+}$ using classical partitions, we need to use the linear combination approach. The correspondence between partitions with extra singletons and linear combinations of ordinary partitions is described explicitly by the following theorem; see Definition \ref{D.U} for a definition of the functor $\U_{(N,\pm)}$.
\begin{customthm}{C}[Proposition \ref{P.U}, Theorem \ref{T.U}]
\label{T.C}
The map $\U_{(N,\pm)}\colon\Part\nlin\to\Part^{\tcol}\nnnlin$ is a monoidal unitary functor. It holds that
$$T_{\U_{(N,\pm)}p}=U^{\otimes l}_{(N,\pm)}T_pU^{*\,\otimes k}_{(N,\pm)}$$
for any $p\in\Part\nlin(k,l)$, $k,l\in\N_0$. Thus, considering a linear category of partitions $\Kat\subset\Part\nlin$ containing $\singleton\otimes\singleton$ and the corresponding quantum group $G$, it holds that the linear category with extra singletons $\U_{(N,\pm)}\Kat$ corresponds to the quantum group $\tilde G:=U_{(N,\pm)}GU_{(N,\pm)}^*$.
\end{customthm}

This explains the isomorphism between the quantum groups $O_N\times E\subset G\subset O_N^+*\hat\Z_2$ and $B_{N+1}\subset\tilde G\subset B_{N+1}^{\#+}$. As an application of Theorems \ref{T.A} and \ref{T.C}, we can take the classification result \cite{MW18,MW19,MW19b}, apply Theorem \ref{T.A} to obtain many examples of quantum groups $O_N\times E\subset G\subset O_N^+*\hat\Z_2$ and then apply Theorem \ref{T.C} to obtain many examples of non-easy quantum groups $B_{N+1}\subset \tilde G\subset B_{N+1}^{\#+}$.

\section{Preliminaries}
\label{sec.prelim}

In this section we recall the basic notions of compact matrix quantum groups and Tannaka--Krein duality. For a more detailed introduction, we refer to the monographs \cite{Tim08,NT13}.

\subsection{Compact matrix quantum groups}
\label{secc.qgdef}
Let $A$ be a C*-algebra, $u_{ij}\in A$, where $i,j=1,\dots,N$ for some $N\in\N$. Denote $u:=(u_{ij})_{i,j=1}^N\in M_N(A)$. The pair $(A,u)$ is called a \emph{compact matrix quantum group} if
\begin{enumerate}
\item the elements $u_{ij}$ $i,j=1,\dots, N$ generate $A$,
\item the matrices $u$ and $u^t=(u_{ji})$ are invertible,
\item the map $\Delta\colon A\to A\otimes_{\rm min} A$ defined as $\Delta(u_{ij}):=\sum_{k=1}^N u_{ik}\otimes u_{kj}$ extends to a $*$-homomorphism.
\end{enumerate}

Compact matrix quantum groups are generalizations of compact matrix groups in the following sense. For $G\subseteq M_N(\C)$ we can take the algebra of continuous functions $A:=C(G)$. This algebra is generated by the functions $u_{ij}\in C(G)$ assigning to each matrix $g\in G$ its $(i,j)$-th element $g_{ij}$. The so-called \emph{comultiplication} $\Delta\colon C(G)\to C(G)\otimes C(G)\simeq C(G\times G)$ is connected with the matrix multiplication on $G$ by $\Delta(f)(g,h)=f(gh)$ for $f\in C(G)$ and $g,h\in G$.

Therefore, for a general compact matrix quantum group $G=(A,u)$, the algebra $A$ should be seen as an algebra of non-commutative functions defined on some non-commutative compact underlying space. For this reason, we often denote $A=C(G)$ even if $A$ is not commutative. The matrix $u$ is called the \emph{fundamental representation} of $G$. Let us note that compact matrix quantum groups are special cases of compact quantum groups, see \cite{NT13,Tim08} for details.

A compact matrix quantum group $H=(C(H),v)$ is a \emph{quantum subgroup} of $G=(C(G),u)$, denoted as $H\subseteq G$, if $u$ and $v$ have the same size and there is a surjective $*$-homomorphism $\phi\colon C(G)\to C(H)$ sending $u_{ij}\mapsto v_{ij}$. We say that $G$ and $H$ are \emph{identical} if there exists such $*$-isomorphism (i.e. if $G\subset H$ and $H\subset G$).% We say that $G$ and $H$ are \emph{similar} if there exists a regular matrix $T$ and a $*$-isomorphism $\phi\colon C(G)\to C(H)$ mapping $u_{ij}\mapsto [TvT^{-1}]_{ij}$. In this case, we write $G=THT^{-1}$.

One of the most important examples is the quantum generalization of the orthogonal group. The orthogonal group can be described as
$$O_N=\{U\in M_N(\C)\mid U_{ij}=\bar U_{ij},UU^t=U^tU=1_{\C^N}\}.$$
So, it can be treated also as a compact matrix quantum group $(C(O_N),u)$, where $C(O_N)$ can be described as a universal C*-algebra
$$C(O_N)=C^*(u_{ij},\;i,j=1,\dots,N\mid u_{ij}=u^*_{ij},uu^t=u^tu=1_{\C^N},u_{ij}u_{kl}=u_{kl}u_{ij}).$$

Such algebra can be quantized by dropping the commutativity relation. This was done by Wang in \cite{Wan95free} and the resulting algebra
$$C(O_N^+):=C^*(u_{ij},\;i,j=1,\dots,N\mid u_{ij}=u^*_{ij},uu^t=u^tu=1_{\C^N})$$
defines the \emph{free orthogonal quantum group}. In \cite{Wan95free} Wang also defined the \emph{free unitary quantum group} using the C*-algebra
$$C(U_N^+):=C^*(u_{ij},\;i,j=1,\dots,N\mid uu^*=u^*u=\bar uu^t=u^t\bar u=1_{\C^N}).$$

For a compact matrix quantum group $G=(C(G),u)$, we say that $v\in M_n(C(G))$ is a representation of $G$ if $\Delta(v_{ij})=\sum_{k}v_{ik}\otimes v_{kj}$, where $\Delta$ is the comultiplication. The representation $v$ is called \emph{unitary} if it is unitary as a matrix, i.e. $\sum_k v_{ik}v_{jk}^*=\sum_k v_{ki}^*v_{kj}=\delta_{ij}$. The fundamental representation is indeed a representation.

\subsection{The tensor and free product and their glued versions}
\label{secc.glued}

\begin{prop}[\cite{Wan95tensor}]
Let $G=(C(G),u)$ and $H=(C(H),v)$ be compact matrix quantum groups. Then $G\times H:=(C(G)\otimes_{\rm max}C(H),u\oplus v)$ is a compact matrix quantum group. For the comultiplication we have that
$$\Delta_\times(u_{ij}\otimes 1)=\Delta_G(u_{ij}),\quad\Delta_\times(1\otimes v_{kl})=\Delta_H(v_{kl}).$$
\end{prop}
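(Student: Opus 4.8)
The plan is to check, directly from the definition in Section~\ref{secc.qgdef}, the three axioms for the pair $(A,w)$ with $A:=C(G)\otimes_{\rm max}C(H)$ and $w:=u\oplus v$, and to extract the comultiplication formula as a by-product. Write $N,M$ for the sizes of $u,v$, so that $w$ is the $(N+M)\times(N+M)$ block-diagonal matrix whose non-zero entries are $w_{ij}=u_{ij}\otimes 1$ for $i,j\le N$ and $w_{(N+k)(N+l)}=1\otimes v_{kl}$ for $k,l\le M$. Axiom~(1) is immediate: the $u_{ij}$ generate $C(G)$ and the $v_{kl}$ generate $C(H)$, hence $u_{ij}\otimes 1$ and $1\otimes v_{kl}$ generate $A$, since every elementary tensor factors as $a\otimes b=(a\otimes 1)(1\otimes b)$. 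Axiom~(2) follows from the block-diagonal shape: the inverses $u^{-1},v^{-1}$ provided by $G,H$ assemble, after applying $a\mapsto a\otimes 1$ and $b\mapsto 1\otimes b$, into a block-diagonal inverse of $w$, and the same reasoning applied to $u^t,v^t$ inverts $w^t$.

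The heart of the matter is axiom~(3), which I would establish through the universal property of the maximal tensor product. Let $j_G\colon C(G)\to A$, $a\mapsto a\otimes 1$ and $j_H\colon C(H)\to A$, $b\mapsto 1\otimes b$ be the canonical unital embeddings, and set
$$\pi_G:=(j_G\otimes_{\rm min}j_G)\circ\Delta_G\colon C(G)\to A\otimes_{\rm min}A,\qquad \pi_H:=(j_H\otimes_{\rm min}j_H)\circ\Delta_H\colon C(H)\to A\otimes_{\rm min}A.$$
Both are $*$-homomorphisms: $\Delta_G,\Delta_H$ are $*$-homomorphisms by hypothesis, and $j_G\otimes_{\rm min}j_G$, $j_H\otimes_{\rm min}j_H$ are well defined by functoriality of the minimal tensor product. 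Their ranges commute, because $\pi_G(a)$ is a limit of sums of elements $(a'\otimes 1)\otimes(a''\otimes 1)$ while $\pi_H(b)$ is a limit of sums of $(1\otimes b')\otimes(1\otimes b'')$, and $a'\otimes 1$ commutes with $1\otimes b'$ inside $A$. The universal property of $\otimes_{\rm max}$ then yields a unique $*$-homomorphism $\Delta_\times:=\pi_G\cdot\pi_H\colon A\to A\otimes_{\rm min}A$ with $\Delta_\times(a\otimes b)=\pi_G(a)\pi_H(b)$. In particular $\Delta_\times(u_{ij}\otimes 1)=\pi_G(u_{ij})=\Delta_G(u_{ij})$ and $\Delta_\times(1\otimes v_{kl})=\Delta_H(v_{kl})$, viewed in $A\otimes A$ via $j_G\otimes j_G$, resp.\ $j_H\otimes j_H$; since the vanishing off-diagonal blocks of $w$ kill the cross terms, these coincide with $\sum_m w_{im}\otimes w_{mj}$, so $\Delta_\times$ is exactly the map required by axiom~(3) and the stated formula holds.

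The point that needs care is the interplay of the two tensor norms. The maximal norm is indispensable on the domain: its universal property is precisely what allows $\pi_G$ and $\pi_H$ to be glued into a single homomorphism from merely commuting ranges, whereas the codomain must be the minimal tensor product to meet the definition of a compact matrix quantum group. These are reconciled by two standard facts which I would invoke rather than reprove: that $a\mapsto a\otimes 1$ defines a (faithful) unital $*$-homomorphism into the maximal tensor product, and that the minimal tensor product is functorial, so that $j_G\otimes_{\rm min}j_G$ and $j_H\otimes_{\rm min}j_H$ are genuine $*$-homomorphisms. The commutation of the two ranges is itself norm-independent, being verified on elementary tensors, and hence passes to the completions without difficulty.
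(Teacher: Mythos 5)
Your proof is correct, and since the paper states this proposition without proof (citing \cite{Wan95tensor}), there is nothing to compare it against beyond the standard argument---which is exactly what you give: verify generation and block-diagonal invertibility of $u\oplus v$ directly, then obtain $\Delta_\times$ by gluing $\pi_G=(j_G\otimes_{\rm min}j_G)\circ\Delta_G$ and $\pi_H=(j_H\otimes_{\rm min}j_H)\circ\Delta_H$, whose ranges commute, via the universal property of $\otimes_{\rm max}$. You also correctly identify the one point where care is needed (maximal norm on the domain, minimal on the codomain) and handle it properly, so no gaps remain.
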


The algebra $C(G)\otimes_{\rm max}C(H)$ can be described as a universal C*-algebra generated by elements $u_{ij}$ and $v_{kl}$ such that every $u_{ij}$ commutes with every $v_{kl}$, the elements $u_{ij}$ satisfy the same relations as $u_{ij}\in C(G)$ and the elements $v_{kl}$ satisfy the same relations as $v_{kl}\in C(H)$. Thus, the matrix 
$$u\oplus v=\begin{pmatrix}u&0\\ 0&v\end{pmatrix}\in M_n(C(G)\otimes_{\rm max}C(H))$$
indeed consists of generators of the algebra $C(G)\otimes_{\rm max}C(H)$. From now on, we will use such interpretation of the maximal tensor product and we will write just $u_{ij}v_{kl}$ instead of $u_{ij}\otimes v_{kl}$ without the explicit tensor sign.

A similar construction can be defined using the free product.

\begin{prop}[\cite{Wan95free}]
Let $G=(C(G),u)$ and $H=(C(H),v)$ be compact matrix quantum groups. Then $G*H:=(C(G)*_\C C(H),u\oplus v)$ is a compact matrix quantum group. For the comultiplication we have that
$$\Delta_*(u_{ij})=\Delta_G(u_{ij}),\quad\Delta_*(v_{kl})=\Delta_H(v_{kl}).$$
\end{prop}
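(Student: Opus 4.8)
The plan is to verify directly the three defining conditions of a compact matrix quantum group from Subsection~\ref{secc.qgdef} for the pair $(C(G)*_\C C(H),\,u\oplus v)$, where $u$ is $N\times N$ and $v$ is $M\times M$, so that $u\oplus v$ is the block-diagonal matrix of size $N+M$. Throughout I would lean on two standard facts about the full (unital) free product: its universal property, and the injectivity of the canonical embeddings into it. For the generation condition~(1), I would simply note that the nonzero entries of $u\oplus v$ are exactly the $u_{ij}$ in the upper-left block and the $v_{kl}$ in the lower-right block. Since the $u_{ij}$ generate $C(G)$, the $v_{kl}$ generate $C(H)$, and $C(G)*_\C C(H)$ is by construction generated by the images of the two factors, the entries of $u\oplus v$ generate the free product.

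For the invertibility condition~(2), the key input is that the canonical unital $*$-homomorphisms $C(G)\inj C(G)*_\C C(H)$ and $C(H)\inj C(G)*_\C C(H)$ are injective, a standard property of full free products of unital C*-algebras. Hence $u$, invertible in $M_N(C(G))$, remains invertible in $M_N(C(G)*_\C C(H))$, and likewise $v$; since $u\oplus v$ and $(u\oplus v)^t=u^t\oplus v^t$ are block diagonal, their invertibility reduces to that of $u,v$ and of $u^t,v^t$.

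The comultiplication in~(3) is produced by the universal property. Composing $\Delta_G\colon C(G)\to C(G)\otimes_{\rm min}C(G)$ with the $*$-homomorphism $C(G)\otimes_{\rm min}C(G)\to(C(G)*_\C C(H))\otimes_{\rm min}(C(G)*_\C C(H))$ induced on minimal tensor products by the inclusions yields a $*$-homomorphism $C(G)\to(C(G)*_\C C(H))\otimes_{\rm min}(C(G)*_\C C(H))$, and analogously for $\Delta_H$. The universal property then gives a unique $*$-homomorphism $\Delta_*$ on $C(G)*_\C C(H)$ restricting to these two maps, which is exactly the asserted behaviour $\Delta_*(u_{ij})=\Delta_G(u_{ij})$ and $\Delta_*(v_{kl})=\Delta_H(v_{kl})$ (the right-hand sides read inside the tensor product of free products). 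It then remains to verify the representation identity $\Delta_*((u\oplus v)_{ab})=\sum_c(u\oplus v)_{ac}\otimes(u\oplus v)_{cb}$ for all $a,b\in\{1,\dots,N+M\}$: for $a,b\le N$ the block structure forces only $c\le N$ to contribute on the right, recovering $\sum_{c\le N}u_{ac}\otimes u_{cb}=\Delta_G(u_{ab})$; the symmetric computation handles $a,b>N$; and for mixed indices both sides vanish, the right-hand sum being empty since no single $c$ can meet the block constraints for both factors.

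The argument is essentially formal, and the single external ingredient—the only genuine obstacle—is the injectivity of the embeddings into the full free product, which I would cite rather than reprove, as it is what lets invertibility of $u$ and $v$ transfer upward. I would also remark that no separate check of coassociativity is needed for the stated definition, but that it does in any case hold on the generators by the usual relabelling $\sum_{c,d}(u\oplus v)_{ad}\otimes(u\oplus v)_{dc}\otimes(u\oplus v)_{cb}$ and hence on all of $C(G)*_\C C(H)$, since both composites are $*$-homomorphisms agreeing on generators.
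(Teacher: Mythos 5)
Your verification is correct, and there is in fact nothing in the paper to compare it against: the proposition is stated with a citation to Wang \cite{Wan95free} and no proof is given, so a direct check of the three defining conditions from Subsection~\ref{secc.qgdef} is exactly what is called for, and all three of your checks go through. One remark on economy, however: the injectivity of the canonical embeddings $C(G)\inj C(G)*_\C C(H)$, which you single out as the one genuine external ingredient, is not actually needed for condition~(2). Invertibility transfers along \emph{any} unital $*$-homomorphism: the entrywise image of $u^{-1}$ under the canonical map is an inverse of the image of $u$ in $M_N(C(G)*_\C C(H))$, and likewise for $u^t$, $v$, $v^t$; block-diagonality then finishes the argument as you describe. (Injectivity of the embeddings is true — e.g.\ compose with the commuting representations $\pi_G\otimes 1$ and $1\otimes\pi_H$ on a tensor product of Hilbert spaces carrying faithful representations of the factors — but it only becomes relevant when one wants $G$ and $H$ to sit inside $G*H$ as quantum subgroups, which this proposition does not assert.) Two trivial points of phrasing: in the mixed-index case the right-hand sum is not empty but rather has every term equal to zero, since for $a\le N<b$ either $c\le N$ kills $(u\oplus v)_{cb}$ or $c>N$ kills $(u\oplus v)_{ac}$; and the functoriality of $\otimes_{\rm min}$ for $*$-homomorphisms that you invoke in constructing $\Delta_*$ is standard and is all you need — no injectivity of the induced map on tensor products enters anywhere. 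Your closing observation that coassociativity is not part of the stated definition, yet holds on generators and hence everywhere since both composites are $*$-homomorphisms agreeing on a generating set, is also correct.
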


We will call the quantum groups $G\times H$ and $G*H$ the \emph{tensor product} and the \emph{free product} of $G$ and $H$ respectively.

%Actually, Wang defined those products in his articles in the general setting of compact quantum groups. However, in our article, we understand by the tensor and free product always this particular compact \emph{matrix} quantum group construction.
%
%For example, we will often use the following construction. Consider a compact matrix quantum group $G=(C(G),u)$ and denote $E:=(\C,1)$ the trivial compact matrix (quantum) group. Then we can construct $G\times E=(C(G),u\oplus 1)$, which is isomorphic to $G$ in the sense that there is a $*$-isomorphism mapping $C(G\times E)=C(G)\to C(G)$, but it is not identical with $G$ in the sense of definition formulated in Subsection \ref{secc.qgdef}.

\begin{prop}[\cite{TW17}] Let $G=(C(G),u)$ and $H=(C(H),v)$ be compact matrix quantum groups. Let $A$ be the C*-subalgebra of $C(G)\otimes_{\rm max} C(H)$ generated by the products $u_{ij}v_{kl}$, i.e. generated by the elements of the matrix $u\otimes v$. Then the \emph{glued tensor product} $G\tiltimes H:=(A,u\otimes v)$ is a compact matrix quantum group. For the comultiplication we have that
$$\Delta_{\tiltimes}(u_{ij}v_{kl})=\Delta_G(u_{ij})\Delta_H(v_{kl})$$
\end{prop}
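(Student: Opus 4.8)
The plan is to verify the three defining conditions of a compact matrix quantum group for the pair $(A,u\otimes v)$, where $A$ is the C*-subalgebra of $C(G)\otimes_{\rm max}C(H)$ generated by the entries of $u\otimes v$. Condition (1) is immediate by the very definition of $A$: the elements $(u\otimes v)_{(ik),(jl)}=u_{ij}v_{kl}$ generate $A$ by construction. For condition (2), I would observe that $u\otimes v$ is the Kronecker product of two invertible matrices (invertibility of $u$ and $v$ being part of the hypothesis that $G$ and $H$ are compact matrix quantum groups), so $u\otimes v$ is invertible with inverse $u^{-1}\otimes v^{-1}$; the same argument applied to the transpose, using $(u\otimes v)^t=u^t\otimes v^t$, gives invertibility of $(u\otimes v)^t$. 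One must check that the entries of these inverses again lie in $A$, which follows since they are polynomial expressions in the $u_{ij}$ and $v_{kl}$ living in the appropriate matrix-algebra closure.

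The heart of the proof is condition (3): constructing the comultiplication. First I would write down the candidate formula
$$\Delta_{\tiltimes}(u_{ij}v_{kl})=\Delta_G(u_{ij})\,\Delta_H(v_{kl})$$
and reinterpret it via the standard comultiplications on the tensor product. The cleanest route is to note that the comultiplication $\Delta_\times$ on $C(G)\otimes_{\rm max}C(H)$ already exists (from the tensor product proposition above), and that $\Delta_{\tiltimes}$ should be nothing but the restriction of $\Delta_\times$ to the subalgebra $A$. Concretely, I would compute
$$\Delta_\times(u_{ij}v_{kl})=\Delta_G(u_{ij})\Delta_H(v_{kl})=\Bigl(\sum_m u_{im}\otimes u_{mj}\Bigr)\Bigl(\sum_n v_{kn}\otimes v_{nl}\Bigr)=\sum_{m,n}(u_{im}v_{kn})\otimes(u_{mj}v_{nl}),$$
and the point is that each factor $u_{im}v_{kn}$ and $u_{mj}v_{nl}$ is again a generator of $A$. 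Hence $\Delta_\times$ maps $A$ into $A\otimes_{\rm min}A$, so its restriction defines the required $*$-homomorphism, and the displayed identity $\sum_{(mn)}(u\otimes v)_{(ik),(mn)}\otimes(u\otimes v)_{(mn),(jl)}$ is exactly the comultiplicativity condition for the fundamental representation $u\otimes v$.

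The main obstacle, and the step deserving the most care, is ensuring that $\Delta_\times$ genuinely \emph{corestricts} to $A\otimes_{\rm min}A$ rather than merely landing in $A\otimes_{\rm min}(C(G)\otimes_{\rm max}C(H))$ or in the larger algebra. This requires checking that every term $\sum_{m,n}(u_{im}v_{kn})\otimes(u_{mj}v_{nl})$ lies in the subalgebra $A\otimes_{\rm min}A$, which is precisely where one uses that both tensor legs are products of one $u$-entry with one $v$-entry, i.e.\ generators of $A$. I would verify this on generators and then extend by multiplicativity and continuity, invoking that a $*$-homomorphism is determined by its values on generators. One subtlety worth a remark is the interplay of the maximal tensor norm on $C(G)\otimes_{\rm max}C(H)$ with the minimal tensor norm appearing in the codomain of a comultiplication; since $A$ is a C*-subalgebra and restriction of a $*$-homomorphism to a C*-subalgebra is automatically a $*$-homomorphism, no new norm estimates are needed beyond what the ambient tensor-product proposition already supplies.
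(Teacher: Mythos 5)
Your argument is correct and is essentially the standard one: the paper itself gives no proof of this proposition (it is quoted from \cite{TW17}), and the proof there proceeds exactly as you do, by restricting the tensor-product comultiplication $\Delta_\times$ to the subalgebra $A$ and checking on the generators $u_{ij}v_{kl}$ that the image lies in $A\otimes_{\rm min}A$ (using injectivity of the minimal tensor product so that the closure of $A\odot A$ inside $C(G)\otimes_{\rm max}C(H)\otimes_{\rm min}C(G)\otimes_{\rm max}C(H)$ is indeed $A\otimes_{\rm min}A$). One local repair: your justification that the entries of $(u\otimes v)^{-1}=u^{-1}\otimes v^{-1}$ lie in $A$ ``since they are polynomial expressions in the $u_{ij}$ and $v_{kl}$'' does not work as stated. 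The entries of $u^{-1}$ are polynomials in the $u_{ij},u_{ij}^*$ (via the antipode), but products of a $u$-polynomial with a $v$-polynomial need not lie in $A$, since $A$ consists only of \emph{balanced} words (equally many $u$- and $v$-letters); an unbalanced product such as $u_{ij}$ alone is generally not in $A$. The clean argument is either unitarity -- for unitary $u,v$ one has $(u\otimes v)^{-1}=(u\otimes v)^*$, whose entries $(u_{ij}v_{kl})^*$ visibly lie in $A$, and $1=\sum_{j,l}(u_{ij}v_{kl})(u_{ij}v_{kl})^*\in A$ -- or, in general, spectral permanence: $u\otimes v$ has entries in the unital C*-subalgebra $A$, it is invertible in $M_{Nn}(C(G)\otimes_{\rm max}C(H))$, hence its inverse already lies in $M_{Nn}(A)$, and likewise for the transpose. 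With that substitution your proof is complete.
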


Similarly, one can define the \emph{glued free product} $G\tilstar H$, which we will not use in this article.

%As an example, let us mention that $S_N'=S_N\tiltimes\hat\Z_2$ and $B_N'=B_N\tiltimes\hat\Z_2$ \cite[Proposition 5.1]{Web13}.

\subsection{Monoidal involutive categories and Tannaka--Krein duality}
Let $R$ be a set of \emph{objects}. For every $r,s\in R$, let $\Mor(r,s)$ be a set of \emph{morphisms} between $r$ and $s$. Let us have associative binary operations $\otimes\colon R\times R\to R$ and $\otimes\colon\Mor(r,s)\times\Mor(r',s')\to\Mor(r\otimes r',s\otimes s')$. Let $\cdot:\Mor(r,s)\otimes\Mor(p,r)\to\Mor(p,s)$ be another associative binary operation. Finally, let $*$ be an involution mapping $\Mor(r,s)\to\Mor(s,r)$. Then the tuple $(R,\{\Mor(r,s)\}_{r,s\in R},\otimes,\cdot,*)$ forms a (small strict) \emph{monoidal involutive category} if the following additional conditions hold
\begin{itemize}
\item For every $r\in R$, there is an identity $1_r\in\Mor(r,r)$ satisfying $1\cdot T_1=T_1$ and $T_2\cdot 1=T_2$ for every $T_1\in\Mor(p,r)$ and every $T_2\in\Mor(r,s)$.
\item There is $1\in R$ such that, for every $r\in R$, $1\otimes r=r\otimes 1=r$.
\end{itemize}
If the sets of morphisms have the structure of a vector space and the corresponding maps of morphisms are linear (or antilinear in the case of involution), we call the structure a \emph{monoidal $*$-category}.

A monoidal $*$-category is called \emph{concrete} if the morphisms are realized by matrices. That is, there is a map $n\colon R\to\N_0$ such that $\Mor(r,s)\subset\Lin(\C^{n(r)},\C^{n(s)})$ and the operations are defined the standard way (the composition is realized by matrix multiplication, the tensor product by the Kronecker product, the involution by conjugate transpose, the identity morphism is the identity matrix).

%Let $R_1, R_2$ be monoidal involutive categories. A bijection $F\colon R_1\to R_2$ together with linear maps $F\colon\Mor(r,s)\to\Mor(F(r),F(s))$ are called a \emph{monoidal unitary functor} if they preserve the structure of a monoidal involutive category, i.e.
%\begin{itemize}
%\item $F(T_1\otimes T_2)=F(T_1)\otimes F(T_2)$,
%\item $F(T_2T_1)=F(T_2)F(T_1)$,
%\item $F(T^*)=F(T)^*$.
%\end{itemize}
%In our case, the functor will act on the objects as some obvious bijection. In this case, we will call the linear maps $F\colon\Mor(r,s)\to\Mor(F(r),F(s))$ as a \emph{monoidal $*$-homomorphism}. If those are also bijections (i.e. linear isomorphisms), we will call them \emph{monoidal $*$-isomorphism}.
%
%Sometimes we will refer to a \emph{monoidal involutive categories}, by which we will mean a monoidal $*$-category, where we drop the linear structure of the morphism spaces.

For given two objects $r,s$ of a monoidal category, we will say that they are \emph{dual} to each other (denoted $s=\bar r$ or $r=\bar s$) if there are morphisms $T_1\in\Mor(1,r\otimes s)$ and $T_2\in\Mor(1,s\otimes r)$ such that $(T_1^*\otimes 1_r)(1_r\otimes T_2)=1_r$ and $(T_2^*\otimes 1_s)(1_s\otimes T_1)=1_s$. A monoidal category, where all objects have their dual is called a monoidal category \emph{with duals}.

An important example of a monoidal $*$-category with duals is the set of all unitary representations $\Rep G$ of a given compact matrix quantum group $G$, where the set of morphisms between two representations $u$ and $v$ is the space of intertwiners $\Mor(u,v)$. A dual of a representation $u=(u_{ij})$ is simply its complex conjugate $\bar u=(u_{ij}^*)$. Such a category is, in addition, \emph{complete} in the sense that it is closed under taking equivalent objects, subobjects and direct sums of objects.

For every compact matrix quantum group $G=(C(G),u)$, it was shown \cite{Wor87} that all representations are direct sums of irreducible ones and that any irreducible representation $v$ is contained as a subrepresentation in a tensor product of sufficiently many copies of the fundamental representation $u$ and its complex conjugate $\bar u$. Thus, to describe the representation theory of a given quantum group $G$, it is enough to consider the category $\Reptil G$ of representations that are made as a tensor product of copies of $u$ and $\bar u$. The complete category $\Rep G$ can be computed as the natural completion of $\Reptil G$.

One of the most important results for compact quantum groups is the Tannaka--Krein duality that was proven by Woronowicz in \cite{Wor88}. It says that conversely given a concrete monoidal $*$-category $R$ generated by some object $r$ and its dual $\bar r$, there exists a compact matrix quantum group $G$ such that $\Rep G$ is the completion of $R$.

\subsection{Partitions}

Let $k,l\in\N_0$, by a \emph{partition} of $k$ upper and $l$ lower points we mean a partition of the set $\{1,\dots,k\}\sqcup\{1,\dots,l\}\approx\{1,\dots,k+l\}$. That is, a decomposition of the set of $k+l$ points into non-empty disjoint subsets called \emph{blocks}. The first $k$ points are called \emph{upper} and the last $l$ points are called \emph{lower}. The set of all partitions on $k$ upper and $l$ lower points is denoted $\Part(k,l)$. We denote the union $\Part:=\bigcup_{k,l\in\N_0}\Part(k,l)$. The number $\left| p\right|:=k+l$ for $p\in\Part(k,l)$ is called the \emph{length} of $p$.

We illustrate partitions graphically by putting $k$ points in one row and $l$ points in another row below and connecting by lines those points that are grouped in one block. All lines are drawn between those two rows.

Below, we give an example of two partitions $p\in \Part(3,4)$ and $q\in\Part(4,4)$ including their graphical representation. The first set of points is decomposed into three blocks, whereas the second one into four blocks. In addition, the first one is an example of a \emph{non-crossing} partition, i.e. a partition that can be drawn in a way that lines connecting different blocks do not intersect (following the rule that all lines are between the two rows of points). On the other hand, the second partition has one crossing.

\begin{equation}
\label{eq.pq}
p=
\BigPartition{
\Pblock 0 to 0.25:2,3
\Pblock 1 to 0.75:1,2,3
\Psingletons 0 to 0.25:1,4
\Pline (2.5,0.25) (2.5,0.75)
}
\qquad
q=
\BigPartition{
\Psingletons 0 to 0.25:1,4
\Psingletons 1 to 0.75:1,4
\Pline (2,0) (3,1)
\Pline (3,0) (2,1)
\Pline (2.75,0.25) (4,0.25)
}
\end{equation}

A block containing a single point is called a \emph{singleton}. In particular, the partitions containing only one point are called singletons and for the sake of clarity denoted by an arrow $\singleton\in\Part(0,1)$ and $\upsingleton\in\Part(1,0)$.

\subsection{Categories of partitions}
\label{secc.cat}

We define the following operations on $\Part$.
\begin{itemize}
\item  The \emph{tensor product} of two partitions $p\in\Part(k,l)$ and $q\in\Part(k',l')$ is the partition $p\otimes q\in \Part(k+k',l+l')$ obtained by writing the graphical representations of $p$ and $q$ ``side by side''
$$
\BigPartition{
\Pblock 0 to 0.25:2,3
\Pblock 1 to 0.75:1,2,3
\Psingletons 0 to 0.25:1,4
\Pline (2.5,0.25) (2.5,0.75)
}
\otimes
\BigPartition{
\Psingletons 0 to 0.25:1,4
\Psingletons 1 to 0.75:1,4
\Pline (2,0) (3,1)
\Pline (3,0) (2,1)
\Pline (2.75,0.25) (4,0.25)
}
=
\BigPartition{
\Pblock 0 to 0.25:2,3
\Pblock 1 to 0.75:1,2,3
\Psingletons 0 to 0.25:1,4,5,8
\Psingletons 1 to 0.75:5,8
\Pline (2.5,0.25) (2.5,0.75)
\Pline (6,0) (7,1)
\Pline (7,0) (6,1)
\Pline (6.75,0.25) (8,0.25)
}
$$

\item For $p\in\Part(k,l)$, $q\in\Part(l,m)$ we define their \emph{composition} $qp\in\Part(k,m)$ by putting the graphical representation of $q$ bellow $p$ identifying the lower row of $p$ with the upper row of $q$. The upper row of $p$ now represents the upper row of the composition and the lower row of $q$ represents the lower row of the composition. By the vertical concatenation, there may appear certain strings that are connected neither to one of the upper or the lower points of the result. Those are removed and the number of such \emph{loops} is denoted by $\rl(p,q)$.
$$
\BigPartition{
\Psingletons 0 to 0.25:1,4
\Psingletons 1 to 0.75:1,4
\Pline (2,0) (3,1)
\Pline (3,0) (2,1)
\Pline (2.75,0.25) (4,0.25)
}
\cdot
\BigPartition{
\Pblock 0 to 0.25:2,3
\Pblock 1 to 0.75:1,2,3
\Psingletons 0 to 0.25:1,4
\Pline (2.5,0.25) (2.5,0.75)
}
=
\BigPartition{
\Pblock 0.5 to 0.75:2,3
\Pblock 1.5 to 1.25:1,2,3
\Psingletons  0.5 to  0.75:1,4
\Pline (2.5,0.75) (2.5,1.25)
\Psingletons -0.5 to -0.25:1,4
\Psingletons  0.5 to  0.25:1,4
\Pline (2,-0.5) (3,0.5)
\Pline (3,-0.5) (2,0.5)
\Pline (2.75,-0.25) (4,-0.25)
}
=
\BigPartition{
\Pblock 0 to 0.25:2,3,4
\Pblock 1 to 0.75:1,2,3
\Psingletons 0 to 0.25:1
\Pline (2.5,0.25) (2.5,0.75)
}
$$

\item For $p\in\Part(k,l)$ we define its \emph{involution} $p^*\in\Part(l,k)$ by reversing its graphical representation with respect to the horizontal axis.
$$
\left(
\BigPartition{
\Pblock 0 to 0.25:2,3
\Pblock 1 to 0.75:1,2,3
\Psingletons 0 to 0.25:1,4
\Pline (2.5,0.25) (2.5,0.75)
}
\right)^*
=
\BigPartition{
\Pblock 1 to 0.75:2,3
\Pblock 0 to 0.25:1,2,3
\Psingletons 1 to 0.75:1,4
\Pline (2.5,0.25) (2.5,0.75)
}
$$
\end{itemize}

These operations are called the \emph{category operations} on partitions.

The set of all natural numbers with zero $\N_0$ as a set of objects together with the sets of partitions $\Part(k,l)$ as sets of morphisms between $k\in\N_0$ and $l\in\N_0$ with respect to those operations form a monoidal involutive category. All objects in the category are self-dual.

Any collection of subspaces $\Cat=\bigcup_{k,l\in\N_0}\Cat(k,l)$, where $\Cat(k,l)\subset\Part(k,l)$, containing the \emph{identity partition} $\idpart\in\Cat(1,1)$ and the \emph{pair partition} $\pairpart\in\Cat(0,2)$ and closed under the category operations is a monoidal involutive category with duals. We call it a \emph{category of partitions}. (In subsequent sections, we will sometimes refer to this object as an \emph{ordinary} or \emph{one-colored} category of partitions to distinguish it from some generalizations.)

For given $p_1,\dots,p_n\in\Part\nlin$ we denote by $\langle p_1,\dots,p_n\rangle$ the smallest category of partitions containing $p_1,\dots,p_n$. We say that $p_1,\dots,p_n$ generate $\langle p_1,\dots,p_n\rangle$. Note that the pair partitions are contained in the category by definition and hence will not be explicitly listed as generators.

\subsection{Linear maps associated to partitions}
\label{secc.Tp}
Now, consider a fixed natural number $N\in\N$. Given a partition $p\in\Part(k,l)$, we can define a linear map $T_p\colon(\C^N)^{\otimes k}\to(\C^N)^{\otimes l}$ via
\begin{equation}
\label{eq.Tp}
T_p(e_{i_1}\otimes\cdots\otimes e_{i_k})=\sum_{j_1,\dots,j_l=1}^N\delta_p(\mathbf{i},\mathbf{j})(e_{j_1}\otimes\cdots\otimes e_{j_l}),
\end{equation}
where $\mathbf{i}=(i_1,\dots,i_k)$, $\mathbf{j}=(j_1,\dots,j_l)$ and the symbol $\delta_p(\mathbf{i},\mathbf{j})$ is defined as follows. Let us assign to the $k$ points in the upper row of $p$ the numbers $i_1,\dots,i_k$ (from left to right) and to the $l$ points in the lower row $j_1,\dots,j_l$ (again from left to right). Then $\delta(\mathbf{i},\mathbf{j})=1$ if the points belonging to the same block are assigned the same numbers. Otherwise $\delta(\mathbf{i},\mathbf{j})=0$. 

As an example, we can express $\delta_p$ and $\delta_q$, where $p$ and $q$ come from Equation \eqref{eq.pq}, using the multivariate $\delta$ function as follows
$$\delta_p(\mathbf{i},\mathbf{j})=\delta_{i_1i_2i_3j_2j_3},\quad
\delta_q(\mathbf{i},\mathbf{j})=\delta_{i_2j_3j_4}\delta_{i_3j_2}.$$

Given a partition $p\in\Part(k,l)$, we can interpret the map $T_p$ as an intertwiner $T_pu^{\otimes k}=u^{\otimes l}T_p$ for some compact matrix quantum group $G$. Substituting the definition of $T_p$, this implies the following relations
\begin{equation}
\label{eq.Trel}
\sum_{t_1,\dots,t_k=1}^N\delta_p(\mathbf{t},\mathbf{s})u_{t_1i_1}\cdots u_{t_ki_k}=\sum_{j_1,\dots,j_l=1}^N\delta_p(\mathbf{i},\mathbf{j})u_{s_1j_1}\cdots u_{s_lj_l}
\end{equation}
for every $i_1,\dots,i_k,s_1,\dots,s_l\in\{1,\dots,N\}$.

For example, considering $p=\pairpart\in\Part(0,2)$, we have the relation $$\delta_{s_1s_2}=\sum_{j=1}^Nu_{s_1j}u_{s_2j}.$$ Thus, for any quantum group $G\subset O_N^+$, we have that $T_{\pairpart}\in\Mor(1,u\otimes u)$. Similarly, we also have $T_{\uppairpart}\in\Mor(u\otimes u,1)$ for any $G\subset O_N^+$.

The map $p\mapsto T_p$ is almost a monoidal unitary functor in the sense that we have
\begin{itemize}
\item $T_{p^*}=T_p^*$,
\item $T_{p\otimes q}=T_p\otimes T_q$ and
\item $T_{qp}=N^{-\rl(p,q)}T_qT_p$.
\end{itemize}
Note that we can make the map $T_\bullet$ an honest monoidal unitary functor if we define a linear structure on the sets of partitions and modify the multiplication by incorporating the factor $N^{\rl(p,q)}$. See Section \ref{sec.linear}.

As a consequence, given a category of partitions $\Cat$, the collection of sets $\spanlin\{T_p\mid p\in\Cat(k,l)\}$ for $k,l\in\N_0$ forms a concrete monoidal $*$-category. According to Tannaka--Krein duality, this means that it can be considered as a category of representations of some compact matrix quantum group $G$. A quantum group that arise by such a construction is called an (orthogonal) \emph{easy} quantum group \cite{BS09}.

We can express the corresponding compact matrix quantum group $G$ very concretely using universal C*-algebras as
$$C(G)=C^*(u_{ij},\;i,j=1,\dots,N\mid u=\bar u,\; T_pu^{\otimes k}=u^{\otimes l}T_p\;\forall p\in\Cat(k,l),\;k,l\in\N_0).$$
Suppose that a set $S$ generates $\Cat$. Then, thanks to the functorial property of $T$, we have
$$C(G)=C^*\left(u_{ij},\;i,j=1,\dots,N\mathbin\Big|\begin{matrix} u=\bar u,\; uu^t=u^tu=1_{\C^N},\\ T_pu^{\otimes k}=u^{\otimes l}T_p\;\forall p\in S(k,l),\;k,l\in\N_0\end{matrix}\right).$$
Recall that $T_pu^{\otimes k}=u^{\otimes l}T_p$ amounts to very concrete relations on the generators $u_{ij}$, see Equation \eqref{eq.Trel}.

\section{Colored categories of partitions}
\label{sec.col}
In \cite{Fre17}, Freslon introduced a formalism of coloured partitions to describe more general quantum groups. In the following subsection, we briefly summarize the general formalism. Then we describe concrete applications. Note that colored partitions were already used to describe unitary quantum groups \cite{TW17} or wreath product of quantum groups \cite{FS18}. However, we will use it in a bit more primitive way to describe quantum groups with fundamental representation in the form of a direct sum.

\subsection{Colored partitions}
\label{secc.col}
Let $\Alphabet$ be a finite alphabet equipped with an involution $x\mapsto\bar x$. An $\Alphabet$-colored partition is a triple $(p,w_1,w_2)$, where $p\in\Part(k,l)$, $w_1$ is a word of length $k$ and $w_2$ is a word of length $l$ over the alphabet $\Alphabet$. The word $w_1$ is called the \emph{upper color pattern} and $w_2$ is called the \emph{lower color pattern} of the partition. We denote $\Part^\Alphabet(w_1,w_2)$ the set of all $\Alphabet$-colored partitions with color patterns $w_1$ and $w_2$. Again, we denote $\Part^\Alphabet(k,l)=\bigcup_{\substack{|w_1|=k,\\|w_2|=l}}\Part^\Alphabet(w_1,w_2)$ and $\Part^\Alphabet=\bigcup_{k,l\in\N_0}\Part^{\Alphabet}(k,l)$.

We define the operations of tensor product, composition and involution the same way as for ordinary partitions (Subsection \ref{secc.cat}) except for the following. The composition $qp$ of $p\in\Part^\Alphabet(w_1,w_2)$ and $q\in\Part^\Alphabet(w_1',w_2')$ is defined only if $w_1'=w_2$.

A subset $\Cat\subset\Part^\Alphabet$ is called a \emph{category of $\Alphabet$-colored partitions} if it is closed under those operations and contains all the identity partitions $(\idpart,x,x)$ and all the pair partitions $(\pairpart,\emptyset,x\bar x)$ for all $x\in\Alphabet$. Any category of $\Alphabet$-colored partitions indeed forms a monoidal involutive category with duals.

%Let us fix a natural number $N$. Then the collection of spaces of morphisms $\Mor(u^{\otimes w_1},u^{\otimes w_2}):=\spanlin\{T_p\mid p\in\Cat(w_1,w_2)\}$ forms a monoidal $*$-category with duals. According to the Tannaka--Krein duality, there is a quantum group $G$ with representations $u^x\in M_N(C(G))$, $x\in\Alphabet$ such that $u^{\bar x}=\overline{u^x}$ and $\Mor(u^{\otimes w_1},u^{\otimes w_2})$ are the intertwiners of the representations $u^{\otimes w_1}$ and $u^{\otimes w_2}$. (Here, given a word $w=x_1\cdots x_k$, we denote by $u^{\otimes w}$ the $k$-fold tensor product $u^{x_1}\otimes\cdots\otimes u^{x_k}$).

%Any category of $\Alphabet$-colored partitions is also closed under the left and right rotation, which are defined the same way as in Subsection \ref{secc.plow} with the exception that moving a point from lower to upper line or vice versa changes its color $x$ to the dual one $\bar x$.
%
%Categories of $\Alphabet$-colored partitions are again equivalently described by a set of $\Alphabet$-colored partitions with lower points only, which is closed under tensor products, contractions, rotations and reflections. Those operations are defined as in Subsection \ref{secc.plow} with the following exceptions. A contraction can be performed on two neighboring points only if the color of one is dual to the color of the second. A reflection changes all colors to the dual ones.

\subsection{Two-colored ``unitary'' partitions}
We get an important instance of colored categories of partitions if $\Alphabet=\{\wcol,\bcol\}$ and $\bar\wcol=\bcol$. The set of all such two-colored partitions will be denoted $\Part\twocol$. Categories of such two-colored partitions correspond to unitary quantum groups in the following way.

Let us take a category $\Cat\subset\Part\twocol$ and a natural number $N\in\N$. We construct a quantum group $G=(C(G),u)$, where $S_N\subset G\subset U_N^+$ as follows. The color $\wcol$ corresponds to the fundamental representation $u$. The color $\bcol$ cooresponds to its complex conjugate $\bar u$. Thus any object in $\Cat$, i.e. a word $w=a_1\cdots a_k$ over the alphabet $\{\wcol,\bcol\}$ corresponds to a $k$-fold tensor product $u^{\otimes w}:=u^{a_1}\otimes\cdots\otimes u^{a_k}$, where $u^\wcol:=u$ and $u^\bcol:=\bar u$. Any partition $p\in\Cat(w_1,w_2)$ is assigned a linear map $T_p\colon (\C^N)^{\otimes |w_1|}\to(\C^N)^{\otimes |w_2|}$ given by Equation \eqref{eq.Tp}. Such a map should be interpreted as an intertwiner between $u^{\otimes w_1}$ and $u^{\otimes w_2}$.

To be more precise, any category $\Cat\subset\Part\twocol$ induces a concrete monoidal $*$-category, where objects are words over the alphabet $\{\wcol,\bcol\}$ and the morphism spaces are given by $\Mor(w_1,w_2)=\spanlin\{T_p\mid p\in\Cat(w_1,w_2)\}$. According to Tannaka--Krein duality there exists a quantum group $G=(C(G),u)$, where the intertwiner spaces are given by this category $\Mor(u^{\otimes w_1},u^{\otimes w_2})=\Mor(w_1,w_2)$. Such a quantum group $G$ is called a (unitary) \emph{easy} quantum group \cite{TW17} and can be constructed as a universal C*-algebra 
$$C(G)=C^*(u_{ij},\;i,j=1,\dots,N\mid T_pu^{\otimes w_1}=u^{\otimes w_2}T_p\;\forall p\in\Cat(w_1,w_2)).$$

For more information about this correspondence, see \cite{Fre17,TW17}. Note also some classification results of the unitary partition categories \cite{TW18,Gro18,MW18,MW19,MW19b}.

\subsection{Two independent colors}
\label{secc.indep}
Consider an alphabet $\Alphabet$ and a partition $p\in\Part^\Alphabet$. We say that two colors of $\Alphabet$ are \emph{independent} in $p$ if no block of $p$ contains both of these colors.

Consider for simplicity two self-dual colors $\Alphabet=\{\wcol,\lozenge\}$ and fix two natural numbers $N_\wcol,N_\lozenge\in\N$. Then any category $\Cat\subset\Part^\Alphabet$ containing only partitions where the two colors are independent can be assigned a quantum group $G=(C(G),u)$, where $S_{N_\wcol}\times S_{N_\lozenge}\subset G\subset O_{N_\wcol}^+* O_{N_\lozenge}^+$.

The color $\wcol$ corresponds to some representation $u^\wcol$ and the color $\lozenge$ corresponds to some representation $u^\lozenge$. Thus, the words $w$ over $\Alphabet$ correspond to tensor products $u^{\otimes w}$. A partition $p\in\Cat(w_1,w_2)$, where $w_1=a_1\cdots a_k$ and $w_2=b_1\cdots b_l$ then corresponds to a map $T_p\colon \C^{N_{a_1}}\otimes\cdots\otimes\C^{N_{a_k}}\to\C^{N_{b_1}}\otimes\cdots\otimes\C^{N_{a_l}}$ given again by Equation \ref{eq.Tp} (where the summation for each $j_n$ goes from 1 to $N_{a_n}$). Then, we can construct a quantum group $G=(C(G),u)$, where $u=u^\wcol\oplus u^\lozenge$ and
$$C(G)=C^*(u_{ij}^\wcol,u_{ij}^\lozenge\mid u^\wcol=\bar u^\wcol,\;u^\lozenge=\bar u^\lozenge,\; T_pu^{\otimes w_1}=u^{\otimes w_2}T_p\;\forall p\in\Cat(w_1,w_2)).$$

To assure that such quantum group indeed exists, we again use the Tannaka--Krein theorem for quantum groups. Let us denote $N:=N_\wcol+N_\lozenge$. Note that using the canonical projections $\C^N\to\C^{N_\wcol}$, $\C^N\to\C^{N_\lozenge}$ and the corresponding embeddings, we can interpret the maps $T_p$ as a mapping $(\C^N)^{\otimes k}\to (\C^N)^{\otimes l}$ and hence as intertwiners for $u=\bar u$.

Note that we have $G=O_{N_\wcol}^+*O_{N_\lozenge}^+$ for $\Cat$ consisting of all non-crossing pair partitions with $\wcol$ and $\lozenge$ being independent. Likewise, $G=S_{N_\wcol}\times S_{N_\lozenge}$ for $\Cat$ consisting of all partitions with $\wcol$ and $\lozenge$ independent.

\begin{rem}
In the special case $N:=N_\wcol=N_\lozenge$ we can consider really all partitions over $\Alphabet$ without assuming that the colors are independent. This allows us to somehow \emph{amalgamate} the two factors. The resulting quantum group $G$ will then sit between $S_N\subset G\subset O_N^+*O_N^+$, where $S_N$ is taken as a subgroup of $S_N\times S_N$ by identifying the two factors. This is the approach originally formulated by Freslon in \cite{Fre17}. Non-crossing categories on two self-dual colors were classified in \cite{Fre19}.
\end{rem}

\section{Partition quantum groups with one-dimensional factor}

\subsection{Partitions with extra singletons}
In this article, we are interested in describing quantum groups $G$, whose fundamental representation $u$ decomposes as a direct sum $v\oplus r$ of an $N$-dimensional representation $v\in M_N(A)$ and a one-dimensional representation $r\in A$. So, let us consider a two-letter alphabet $\Alphabet=\{\tcol,\lincol\}$. The triangle $\tcol$ corresponds to the representation $r$ and the line $\lincol$ corresponds to the representation $v$.

Since the representation $r$ is one-dimensional, one can see that the block structure of the color $\tcol$ is irrelevant, because it does not affect the corresponding map $T_p$ at all. This motivates the following definition.

\begin{defn}
Consider the alphabet $\Alphabet=\{\tcol,\lincol\}$ with the trivial involution $\tcol\mapsto\tcol$, $\lincol\mapsto\lincol$. A \emph{partition with extra singletons} is an $\Alphabet$-colored partition $p$, where all points of the color $\tcol$ are singletons. Any set $\Cat$ of partitions with extra singletons that is closed under the category operations and contains the partitions $\idpart$, $\idext$, $\pairpart$ and $\singext\otimes\singext$ is called a \emph{category of partitions with extra singletons}. The category of all partitions with extra singletons is denoted $\Part^{\tcol}$. The points with color $\tcol$ are called \emph{extra singletons}. The smallest category of partitions with extra singletons containing given $p_1,\dots,p_n\in\Part^{\tcol}$ is denoted by $\langle p_1,\dots,p_n\rangle^{\tcol}$.
\end{defn}

Recall from Subsection \ref{secc.col} that the composition $qp$ of two partitions $p$, $q$ with extra singletons is defined only if the upper color pattern of $q$ matches the lower color pattern of $p$, that is, extra singletons can be composed only with extra singletons.

A typical example of partition with extra singletons looks as follows
\begin{equation}
\label{eq.pext}
p=\BigPartition{
\Psingletons 0to0.25:5
\Psingletons 1to0.75:1
\Ppoint0 \Lt:1,4
\Ppoint1 \Ut:3,4
\Pblock 0to0.3:2,3
\Pblock 1to0.7:2,5
\Pline(2.5,0.3)(2.5,0.7)
}.
\end{equation}

%For any $N\in\N$ and a $p\in\Part^{\tcol}(w_1,w_2)$, we define $T_p\colon\C^{\otimes w_1}\to\C^{\otimes w_2}$, where $\C^{\tcol}=\C$ and $\C^{\lincol}=\C^N$, as
%$$T_p(e_{i_1}\otimes\cdots\otimes e_{i_{|w_1|}})=\sum_{j_1,\dots,j_{|w_2|}}\delta_p(\textbf{i},\textbf{j})(e_{j_1}\otimes\cdots\otimes e_{j_{|w_2|}}).$$
%
%The corresponding compact quantum group can be seen as a compact matrix quantum group with fundamental representation $v\oplus r$. It can be realized as
%$$C(G)=C^*(r,v_{ij}\mid r=r^*,\; v_{ij}=v_{ij}^*,\; T_pv^{\otimes w_1}=v^{\otimes w_2}T_p,p\in\Cat(w_1,w_2)).$$
%
%Since we have the pairs $\pairpart,\uppairpart\in\Cat$, the representation $v$ is orthogonal. The double-extra-singleton $\singext\otimes\singext\in\Cat$ implies $r^2=1$.

\subsection{The corresponding quantum groups}
Let us summarize here the meaning of such categories by applying the general considerations mentioned in Sections \ref{sec.prelim} and \ref{sec.col}. A category of partitions with extra singletons, being a colored category with independent colors, corresponds to some quantum group $G$ with $S_N\times S_{N'}\subset G\subset O_N^+*O_{N'}^+$. As we mentioned above, we will always assume $N'=1$, so actually we have
$$S_N\times E\subset G\subset O_N^+*\hat\Z_2,$$
where $E=(\C,1)$ is the trivial (compact matrix) quantum group. So, $G$ is a matrix quantum group with matrix of size $(N+1)\times(N+1)$ having a block structure with one block of size $N$ and second block of size one. We will usually denote the fundamental representation of $G$ by $u=v\oplus r$.

To define this quantum group, we have to first describe the $T_p$ maps. In our case, they can be defined as follows. Consider a partition with extra singletons $p\in\Part^{\tcol}$. Denote by $k'$ resp.\ $l'$ the number of upper resp.\ lower points of the color $\lincol$ (i.e.\ not being extra singletons). Then we define $T_p\colon(\C^N)^{\otimes k'}\to(\C^N)^{\otimes l'}$ by Equation \eqref{eq.Tp} ignoring all the extra singletons in $p$. The extra singletons become important when we interpret the partition $p$ as an intertwiner $T_pu^{\otimes w_1}=u^{\otimes w_2}T_p$, where $w_1$ and $w_2$ are the upper and the lower color pattern of $p$, respectively.

\begin{ex}
Given a partition $\positionerext$, it is associated a map $T_{\positionerext}\colon\C^N\to\C^N$, which coincides with the map associated to the identity partition $\idpart$. It is the identity $\C^N\to\C^N$. However, the interpretation of those partitions are different. While the identity partition gives us just the trivial relation
$$v=vT_{\idpart}=T_{\idpart}v=v,$$
the relation associated to the partition $\positionerext$ reads
$$vr=v\otimes r=T_{\positionerext}(v\otimes r)=(r\otimes v)T_{\positionerext}=r\otimes v=rv.$$

See Example \ref{ex.FTp} for another example of a $T_p$ map and a relation associated to a partition with extra singletons.
\end{ex}

Now a quantum group $G=(C(G),v\oplus r)$ corresponding to a given category of partitions with extra singletons $\Cat\subset\Part^{\tcol}$ can be defined by
$$C(G)=C^*(v_{ij},r\mid v_{ij}=v_{ij}^*,\;r=r^*,\; T_pu^{\otimes w_1}=u^{\otimes w_2}T_p\;\forall p\in\Cat(w_1,w_2)),$$
where $u^{\lincol}=v$ and $u^{\tcol}=r$.

As we mentioned at the end of Subsection \ref{secc.Tp}, we do not have to consider the relations corresponding to all the partitions in $\Cat$, but only to some generating set of $\Cat$. So, suppose $\Cat=\langle S\rangle^{\tcol}$, then we have
$$C(G)=C^*\left(v_{ij},r\mathbin\Big|\begin{matrix} v=\bar v,\; vv^t=v^tv=1_{\C^N},r=r^*,\;r^2=1\\ T_pu^{\otimes w_1}=u^{\otimes w_2}T_p\;\forall p\in S(w_1,w_2)\end{matrix}\right).$$
Note that the orthogonality relations $vv^t=v^tv=1$ and $r^2=1$ correspond to the partitions $\pairpart$ and $\singext\otimes\singext$, which are contained in any category by definition, but they are usually not explicitly listed as generators.

\subsection{The quantum group relations}
Let us now give a few examples of partitions with extra singletons and the corresponding quantum group relations. Recall that we must assume that all the generators $v_{ij}$ and $r$ are self-adjoint -- this does not follow from any partition relation. We have the following correspondences:
\begin{align*}
\pairpart                       & \qquad vv^t=1\\
\singext\otimes\singext         & \qquad r^2=1\\
\singext                        & \qquad r=1\\
\positionerext                  & \qquad v_{ij}r=rv_{ij}\\
\globcolext                     & \qquad v_{ij}v_{kl}r=rv_{ij}v_{kl}\\
\end{align*}

The first two partitions correspond to orthogonality of $u$ and $r$ and are by definition present in all categories with extra singletons. The following partitions allow us to construct the most basic instances of extra-singleton categories and the corresponding quantum groups.

\begin{prop}
\label{P.prods}
Let $\Cat\subset\Part$ be an ordinary category of partitions corresponding to a quantum group $H\subset O_N^+$. Then
\begin{enumerate}
\item $\langle\Cat\rangle^{\tcol}$ corresponds to $H*\hat\Z_2$,
\item $\langle\Cat,\positionerext\rangle^{\tcol}$ corresponds to $H\times\hat\Z_2$,
\item $\langle\Cat,\singext\rangle^{\tcol}$ corresponds to $H\times E$,
\end{enumerate}
where $E=(\C,1)$ is the trivial (quantum) group.
\end{prop}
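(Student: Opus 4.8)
The plan is to prove all three statements by directly comparing universal C*-algebra presentations, since both sides are compact matrix quantum groups specified by explicit generators and relations. The key preliminary observation is that each of the three categories is generated by a small explicit set: $\langle\Cat\rangle^{\tcol}$ is generated by $\Cat$ itself (viewed as partitions all of whose points carry the color $\lincol$), while $\langle\Cat,\positionerext\rangle^{\tcol}$ and $\langle\Cat,\singext\rangle^{\tcol}$ are generated by $\Cat$ together with one further partition. By the functoriality of $p\mapsto T_p$ recalled at the end of Subsection~\ref{secc.Tp}, the universal C*-algebra defining the associated quantum group may be written using only the relations coming from such a generating set, so the whole argument reduces to reading off those relations.

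First I would treat~(1). Every $p\in\Cat$ has monochromatic ($\lincol$) color patterns, so the relation $T_pu^{\otimes w_1}=u^{\otimes w_2}T_p$ involves only $v$ and is exactly a defining relation of $H\subset O_N^+$. Adjoining the relations $v_{ij}=v_{ij}^*$, $r=r^*$ (imposed by hand), $vv^t=v^tv=1$ (from $\pairpart$), and $r^2=1$ (from $\singext\otimes\singext$), the presentation reads
\[ C(G)=C^*\bigl(v_{ij},r\mid \text{relations of }H,\ r=r^*,\ r^2=1\bigr), \]
with no relation linking $v$ and $r$. By the universal property of the free product this is precisely $C(H)*_\C C^*(\Z_2)=C(H*\hat\Z_2)$, and the fundamental representation $u=v\oplus r$ matches; this gives~(1). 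For~(2) and~(3) only one generator is added, and its relation is read off from the table: $\positionerext$ contributes $v_{ij}r=rv_{ij}$, turning the presentation into $C^*(v_{ij},r\mid\text{relations of }H,\ r=r^*,\ r^2=1,\ [v_{ij},r]=0)$, which by the universal property of $\otimes_{\rm max}$ equals $C(H)\otimes_{\rm max}C^*(\Z_2)=C(H\times\hat\Z_2)$; while $\singext$ contributes $r=1$, collapsing the one-dimensional block and leaving $C^*(v_{ij}\mid\text{relations of }H)=C(H)$ with fundamental representation $v\oplus1$, namely $H\times E$.

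The one step that requires genuine care --- and the only place the argument could hide a gap --- is the passage to a generating set: I must be sure that $\langle\Cat\rangle^{\tcol}$ contains no color-mixing partition forcing a relation between $v$ and $r$ beyond those already listed. This is exactly what functoriality of $T_\bullet$ provides: since $T_\bullet$ is compatible with tensor product, composition and involution, the intertwiner relations coming from a generating set $S$ automatically imply those of the entire category $\langle S\rangle^{\tcol}$, so the universal algebra built from $S$ coincides with the one built from all of $\langle S\rangle^{\tcol}$. Consequently no hidden coupling relations can arise, and the three presentations are literally the free product, the maximal tensor product, and the trivial one-dimensional extension, as claimed.
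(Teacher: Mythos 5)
Your proposal is correct and follows essentially the same route as the paper's own proof: read off the concrete relations attached to the generating set ($\Cat$ alone, plus $\positionerext$ giving $v_{ij}r=rv_{ij}$, plus $\singext$ giving $r=1$), justify restricting to generators via the functoriality of $T_\bullet$, and identify the resulting presentations with $C(H)*_\C C^*(\Z_2)$, $C(H)\otimes_{\rm max}C^*(\Z_2)$, and $C(H)$ with fundamental representation $v\oplus 1$. You are merely more explicit than the paper about the universal-property identifications and about why no hidden color-mixing relations can arise, which is a welcome elaboration rather than a divergence.
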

\begin{proof}
We just need to look at the relations implied by the generators of the categories and find out which quantum subgroup $G\subset O_N^+*\hat\Z_2$ they determine. In the first case, we only have partitions without extra singletons, so in the corresponding relations only the elements $v_{ij}$ appear (not the subrepresentation $r$). In particular, those relations correspond to the subgroup $H\subset O_N^+$, so, taken as generators of a category with extra singletons, they define the subgroup $H*\hat\Z_2\subset O_N^+*\hat\Z_2$. Indeed, since we do not add any new relations on $r$, the $r$ remains free from $v_{ij}$ and keeps representing the factor $\hat\Z_2$.

In the second case, we have, in addition, the generator $\positionerext$, which corresponds to the relation $v_{ij}r=rv_{ij}$. Thus, the category corresponds to the quantum subgroup of $H*\hat\Z_2$ given by this relation. The relation is simply commutativity of the factors $C(H)$ and $C^*(\Z_2)$, so the corresponding quantum group is the tensor product $H\times\hat\Z_2$.

Finally, the last instance corresponds to the subgroup of $H*\hat\Z_2$ with respect to the relation $r=1$. This relation corresponds to taking just the trivial subgroup of~$\hat\Z_2$.
\end{proof}

The example $\globcolext$ corresponding to some weaker kind of commutativity can be seen as a motivating example for our article. We are going to show (see Proposition \ref{P.prodtab}) that for any category of partitions $\Cat\subset\Part$ we have that
$$\langle\Cat\rangle^{\tcol}\subsetneq\langle\Cat,\globcolext\rangle^{\tcol}\subsetneq\langle\Cat,\positionerext\rangle^{\tcol}.$$
Such a category hence corresponds to a quantum group $G$ with
$$H*\hat\Z_2\supsetneq G\supsetneq H\times\hat\Z_2,$$
which can be seen as a new kind of quantum group product of $H$ with $\hat\Z_2$. A similar result can be formulated with many other partitions with extra singletons (see Section \ref{secc.easy}).

\subsection{Induced one-colored categories}
In the preceding subsection we studied the most simple constructions of how to get an extra-singleton category from an ordinary category. Here, we are going to study the converse. Given a category of partitions with extra singletons $\Cat\subset\Part^{\tcol}$ corresponding to a quantum group $G$, we are going to define two natural ordinary categories associated to $\Cat$. The first one corresponds to the smallest quantum group $H\subset O_N^+$ such that $G\subset H*\hat\Z_2$. The second one corresponds to the largest quantum subgroup $\tilde H\subset O_N^+$ contained in $G$ (in the sense $\tilde H\times E\subset G$, where $E$ is the trivial group). Note that we may have $H\neq\tilde H$ in contrast with the simple examples of the previous subsection.

Firstly, the set of all one-colored partitions $\Part$ can be viewed as a subset of $\Part^{\tcol}$. In this sense, every category of partitions with extra singletons $\Cat$ induces a category of one-colored partitions by restriction:
$$\Cat^{\lincol}:=\{p\in\Cat\mid\text{$p$ does not contain any extra singleton}\}.$$
It is easy to check that $\Cat^{\lincol}$ is a category of partitions for any category of partitions with extra singletons $\Cat$.

\begin{lem}
\label{L.H}
Let $\Cat$ be a category of partitions with extra singletons corresponding to a quantum group $G=(C(G),v\oplus r)$. Then the one-colored category $\Cat^{\lincol}$ corresponds to the quantum group $H=(A,v)$, where $A\subset C(G)$ is the subalgebra generated by $\{v_{ij}\}_{i,j=1}^N$.
\end{lem}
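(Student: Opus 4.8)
The plan is to show that the quantum group $H=(A,v)$ associated to the subalgebra $A\subseteq C(G)$ generated by $\{v_{ij}\}$ is exactly the easy quantum group determined by the one-colored category $\Cat^{\lincol}$. Since the quantum group associated to $\Cat^{\lincol}$ is defined by the relations $T_p v^{\otimes k}=v^{\otimes l}T_p$ for $p\in\Cat^{\lincol}(k,l)$ together with orthogonality, it suffices to verify two things: first, that $(A,v)$ is indeed a compact matrix quantum group (i.e.\ that the comultiplication $\Delta$ on $C(G)$ restricts to $A$), and second, that the intertwiner space $\Mor(v^{\otimes k},v^{\otimes l})$ computed inside $\Rep H$ equals $\spanlin\{T_p\mid p\in\Cat^{\lincol}(k,l)\}$.

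First I would check that $A$ is a Woronowicz subalgebra. The comultiplication on $C(G)$ sends $v_{ij}\mapsto\sum_m v_{im}\otimes v_{mj}$ (this is the block of $\Delta_G$ corresponding to the subrepresentation $v$, since $u=v\oplus r$ is a representation and the direct-sum structure means $v$ is itself a subrepresentation, so its coefficients $v_{ij}$ comultiply among themselves). Hence $\Delta(A)\subseteq A\otimes_{\min}A$, the matrix $v$ and $v^t$ are invertible (as $vv^t=v^tv=1$ holds in $C(G)$ and these are relations inside $A$), and the $v_{ij}$ generate $A$ by definition; so $H=(A,v)$ is a compact matrix quantum group and visibly $H\subseteq O_N^+$ because $v=\bar v$ and $v$ is orthogonal.

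Next I would identify the intertwiners. For the inclusion $\supseteq$: every $p\in\Cat^{\lincol}$ is in particular a partition in $\Cat$ with no extra singletons, so the relation $T_p u^{\otimes w_1}=u^{\otimes w_2}T_p$ holds in $C(G)$ with $w_1,w_2$ consisting only of $\lincol$'s; but then $u^{\otimes w_i}$ restricted to the relevant tensor legs is just $v^{\otimes k}$, so $T_p\in\Mor_H(v^{\otimes k},v^{\otimes l})$, giving $\spanlin\{T_p\mid p\in\Cat^{\lincol}\}\subseteq\Rep H$. For the reverse inclusion I would use Tannaka--Krein: let $H'$ be the easy quantum group attached to $\Cat^{\lincol}$, so $C(H')=C^*(w_{ij}\mid w=\bar w,\;T_p w^{\otimes k}=w^{\otimes l}T_p\;\forall p\in\Cat^{\lincol})$. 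Because $v\in M_N(A)\subseteq M_N(C(G))$ satisfies all the defining relations of $C(H')$ (it is orthogonal and intertwines every $T_p$ with $p\in\Cat^{\lincol}$, as just shown), the universal property yields a surjective $*$-homomorphism $C(H')\twoheadrightarrow A$ sending $w_{ij}\mapsto v_{ij}$; thus $H\subseteq H'$, which means $\Mor_H\supseteq\Mor_{H'}=\spanlin\{T_p\mid p\in\Cat^{\lincol}\}$.

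Combining the two inclusions gives $\Mor_H(v^{\otimes k},v^{\otimes l})=\spanlin\{T_p\mid p\in\Cat^{\lincol}(k,l)\}$ for all $k,l$, which is precisely the statement that $\Cat^{\lincol}$ is the category of partitions corresponding to $H$. The main obstacle I anticipate is the reverse inclusion: it is \emph{not} automatic that $\Mor_H(v^{\otimes k},v^{\otimes l})$ is spanned by the $T_p$, since a priori the subalgebra $A$ could have \emph{more} intertwiners than those coming from $\Cat^{\lincol}$ (the ambient relations involving $r$ in $C(G)$ might force extra relations among the $v_{ij}$ that are invisible at the partition level). The universal-property argument above only shows $H\subseteq H'$; to get genuine equality one must argue that the surjection $C(H')\twoheadrightarrow A$ does not collapse anything, equivalently that $H'\subseteq G$ as well via $w_{ij}\mapsto v_{ij}$, $r\mapsto$ the generator $r$. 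This is ensured because $C(G)=\langle\Cat\rangle^{\tcol}$ adds to the $v$-relations only relations that genuinely involve extra singletons (hence the element $r$), so restricting attention to words in $\lincol$ alone recovers exactly the relations of $\Cat^{\lincol}$ and no more; making this precise — that no $r$-free relation beyond those in $\Cat^{\lincol}$ is implied — is the crux, and it follows from the functoriality of $T_\bullet$ together with the fact that the $T_p$ for $p\in\Cat$ with extra singletons impose constraints only on monomials containing $r$.
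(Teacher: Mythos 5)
Your first half (verifying that $H=(A,v)$ is a compact matrix quantum group: generation, invertibility of the blocks, restriction of the comultiplication) matches the paper's proof. The genuine gap is in the second half, and you located it yourself but did not close it: both of your intertwiner arguments establish the \emph{same} inclusion $\spanlin\{T_p\mid p\in\Cat^{\lincol}(k,l)\}\subseteq\Mor_H(v^{\otimes k},v^{\otimes l})$ --- the universal-property surjection $C(H')\surj A$ gives $H\subseteq H'$, hence again $\Mor_{H'}\subseteq\Mor_H$ --- while the inclusion that actually makes $\Cat^{\lincol}$ the category of $H$, namely $\Mor_H(v^{\otimes k},v^{\otimes l})\subseteq\spanlin\{T_p\mid p\in\Cat^{\lincol}(k,l)\}$, is left to the heuristic that partitions with extra singletons ``impose constraints only on monomials containing $r$''. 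That heuristic is false, and the paper itself contains a counterexample: for $\Cat=\langle\Psa\rangle^{\tcol}$ the generator forces $r=s:=\sum_k v_{ik}=\sum_k v_{kj}$, whence $s^2=r^2=1$, an $r$-free relation among the $v_{ij}$ alone, reflected combinatorially in the composition $(\Psa\otimes\Psa)\cdot(\singext\otimes\singext)=\singleton\otimes\singleton$. So relations genuinely involving $r$ \emph{do} have $r$-free consequences; what saves the lemma is not the absence of such consequences but the hypothesis that $\Cat$ is a full category, closed under the category operations, so that every $r$-free consequence is already recorded as a partition inside $\Cat^{\lincol}$.

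The correct and short argument --- which is what the paper means by ``by definition of $G$'' --- bypasses your relation-chasing entirely. Since $G$ is obtained from $\Cat$ by Tannaka--Krein reconstruction, its intertwiner spaces are \emph{exactly} $\Mor_G(u^{\otimes w_1},u^{\otimes w_2})=\spanlin\{T_p\mid p\in\Cat(w_1,w_2)\}$ for all color words, with no extra intertwiners; specializing to $w_1=\lincol^{\,k}$, $w_2=\lincol^{\,l}$ and noting that $\Cat(\lincol^{\,k},\lincol^{\,l})=\Cat^{\lincol}(k,l)$ gives $\Mor_G(v^{\otimes k},v^{\otimes l})=\spanlin\{T_p\mid p\in\Cat^{\lincol}(k,l)\}$ on the nose. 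Then, because $A$ is a \emph{subalgebra} of $C(G)$, the intertwining equation $Tv^{\otimes k}=v^{\otimes l}T$ holds over $A$ if and only if it holds over $C(G)$, so $\Mor_H(v^{\otimes k},v^{\otimes l})=\Mor_G(v^{\otimes k},v^{\otimes l})$ and the claim follows. Note also that your reformulation of the goal as injectivity of $C(H')\surj A$ overshoots: the correspondence between categories and quantum groups concerns intertwiner spaces, and the paper's proof computes those directly without deciding whether $A$ is the universal algebra for the $\Cat^{\lincol}$-relations.
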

\begin{proof}
We are looking for a compact matrix quantum group $H$ whose intertwiner spaces are given by the partitions in $\Cat^{\lincol}$. Defining $H$ as in the statement of the lemma, we first need to prove that it is a compact matrix quantum group. This is easy to check: (1) $A$ is generated by $v_{ij}$ by definition, (2) if a block diagonal matrix has an inverse, then the blocks must also be invertible, (3) the comultiplication is given simply by restriction to $A$. Finally, by definition of $G$, the elements of $\Cat^{\lincol}$ precisely describe the intertwiners of $v$ as a subrepresentation of $u$. We defined $H$ in such a~way that $v$ is its fundamental representation, so its intertwiners are indeed described by $\Cat^{\lincol}$.
\end{proof}

Secondly, given a category of partitions with extra singletons $\Cat$, we can somehow ignore the extra singletons. Let us define the following
$$\Cat^{\tcol=1}:=(\bar\Cat)^{\lincol},$$
where $\bar\Cat=\langle\Cat,\singext\rangle^{\tcol}$. Obviously, we have $\Cat^{\lincol}\subset\Cat^{\tcol=1}$.

%\begin{lem}
%Let $\Cat$ be a category of partitions with extra singletons. Then
%$$\Cat^{\tcol=1}=\tilde\Cat^{\lincol},$$
%where $\tilde\Cat=\langle\Cat,\singext\rangle^{\tcol}$
%\end{lem}
%\begin{proof}The inclusion $\subset$ is clear since the extra singleton $\subset$ can be used to erase all extra singletons in a given $p\in\Cat$.
%\end{proof}

\begin{lem}
\label{L.tilH}
Let $\Cat$ be a category of partitions with extra singletons corresponding to a quantum group $G=(C(G),v\oplus r)$. Then the one-colored category $\Cat^{\tcol=1}$ corresponds to the quantum group $\tilde H=(A,\tilde v)$, where $A$ is the quotient of $C(G)$ by the relation $r=1$ and $\tilde v_{ij}$ are the images of $v_{ij}$ under the natural homomorphism.
\end{lem}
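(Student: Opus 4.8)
The plan is to reduce the statement to Lemma~\ref{L.H} by first analysing the auxiliary category $\bar\Cat=\langle\Cat,\singext\rangle^{\tcol}$ through which $\Cat^{\tcol=1}=(\bar\Cat)^{\lincol}$ is defined.

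First I would identify the quantum group $\bar G$ corresponding to $\bar\Cat$. Since $\bar\Cat$ arises from $\Cat$ by adjoining the single generator $\singext$, and since $\singext$ encodes exactly the relation $r=1$ (as recorded in the table of quantum group relations above), the functoriality of the map $p\mapsto T_p$ shows that the defining relations of $\bar G$ are those of $G$ together with $r=1$. Hence $\bar G=(A,\tilde v\oplus 1)$, where $A=C(G)/(r=1)$ is the quotient appearing in the statement, the $\tilde v_{ij}$ are the images of $v_{ij}$, and the one-dimensional block $r$ collapses to the scalar~$1$.

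Next I would apply Lemma~\ref{L.H}, now with $\bar\Cat$ and $\bar G$ playing the roles of $\Cat$ and $G$. Because $\Cat^{\tcol=1}=(\bar\Cat)^{\lincol}$ by definition, the lemma immediately gives that $\Cat^{\tcol=1}$ corresponds to the quantum group $(A_0,\tilde v)$, where $A_0\subset A$ is the subalgebra generated by $\{\tilde v_{ij}\}_{i,j=1}^N$. It then remains only to observe that $A_0=A$: the quotient $A=C(G)/(r=1)$ is generated by the images of the original generators $v_{ij}$ and $r$ of $C(G)$, and since the image of $r$ is $1$, the elements $\tilde v_{ij}$ already generate all of $A$. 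This yields $\tilde H=(A,\tilde v)$, as claimed.

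I expect the only delicate point to be the bookkeeping in the first step: verifying that passing to $\bar\Cat$ leaves the linear part $v$ of the fundamental representation untouched while sending the extra-singleton part $r$ to $1$, so that Lemma~\ref{L.H} applies verbatim to $\bar G$. Everything else is formal once this identification of $\bar G$ with the quotient of $G$ by $r=1$ is in place.
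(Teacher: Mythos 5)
Your proposal is correct and follows essentially the same route as the paper: identify $\bar G$, the quantum group of $\bar\Cat=\langle\Cat,\singext\rangle^{\tcol}$, as the quotient of $G$ by the relation $r=1$ (so $\bar G=(A,\tilde v\oplus 1)$), and then apply Lemma~\ref{L.H} to $\bar G$. Your additional remark that the subalgebra generated by the $\tilde v_{ij}$ is all of $A$ because $r$ maps to $1$ is a point the paper leaves implicit, and making it explicit is a small but genuine improvement in completeness.
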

\begin{proof}
The partition $\singext$ correspons to the relation $r=1$, so $\bar\Cat$ is the category corresponding to the quantum subgroup $\bar G\subset G$ defined by imposing the relation $r=1$. That is, $\bar G=(A,\tilde v\oplus 1)$. Using the preceding lemma, we get that $\Cat^{\tcol=1}$ corresponds to the quantum group $\tilde H$.
\end{proof}

\begin{prop}
\label{P.prods2}
Let $\Cat$ be a category of partitions with extra singletons. Denote by $G$ the quantum group corresponding to $\Cat$ and by $H$ the quantum group corresponding to $\Cat^{\lincol}$.
\begin{enumerate}
\item If $\singext\in\Cat$, then $\Cat=\langle\Cat^{\lincol},\singext\rangle^{\tcol}$ and $G=H\times E$.
\item If $\singext\not\in\Cat$, $\singleton\otimes\singext\not\in\Cat$, but $\positionerext\in\Cat$, then $\Cat=\langle\Cat^{\lincol},\positionerext\rangle^{\tcol}$ and $G=H\times\hat\Z_2$.
\end{enumerate}
\end{prop}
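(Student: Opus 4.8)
The plan is to reduce both statements to an equality of categories of partitions with extra singletons, since the identification of the quantum group then follows at once from Proposition \ref{P.prods}: once we know $\Cat=\langle\Cat^{\lincol},\singext\rangle^{\tcol}$ (resp.\ $\Cat=\langle\Cat^{\lincol},\positionerext\rangle^{\tcol}$), part (3) (resp.\ part (2)) of that proposition applied to the ordinary category $\Cat^{\lincol}$, which corresponds to $H$ by Lemma \ref{L.H}, yields $G=H\times E$ (resp.\ $G=H\times\hat\Z_2$). In both cases the inclusion $\supseteq$ is immediate, since $\Cat^{\lincol}\subset\Cat$ by definition and the extra generator ($\singext$ or $\positionerext$) lies in $\Cat$ by hypothesis. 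Throughout I would use two manipulations available in every category with extra singletons: the pair $\singext\otimes\singext$ together with its adjoint creates and annihilates extra singletons two at a time, and---once $\positionerext$ is at hand---the crossing it provides lets me transport any extra singleton past a line strand and hence move extra singletons to any prescribed position.

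For part (1) the partition $\singext$ encodes $r=1$, so single extra singletons may be freely added and removed. First I would record that $\positionerext\in\langle\singext\rangle^{\tcol}$: the composition $(\singext\otimes\idpart)(\idpart\otimes\singext^*)$ produces exactly the crossing of a line strand with an extra singleton, so crossings are available. Given $p\in\Cat$, capping every extra singleton of $p$ with $\singext$ or $\singext^*$ (both in $\Cat$) produces a partition $\hat p\in\Cat$ with no extra singleton, hence $\hat p\in\Cat^{\lincol}$. Conversely $p$ is recovered from $\hat p$ by tensoring the required extra singletons at the boundary and sliding them into place with $\positionerext$; thus $p\in\langle\Cat^{\lincol},\singext\rangle^{\tcol}$, which gives the equality.

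For part (2) the heart of the matter is the parity claim that \emph{every $p\in\Cat$ contains an even number of extra singletons}, which I would prove by contradiction. Suppose $p\in\Cat$ has an odd number of extra singletons. Using the crossing $\positionerext$ I bring all extra singletons together and annihilate them in pairs with $(\singext\otimes\singext)^*$, leaving a single extra singleton. Then, rotating all line legs into one row with the pair partitions $\pairpart,\uppairpart$ (present in every category) and capping the line points two at a time, I reduce the line part to either no line point or exactly one, while the lone extra singleton---kept aside by $\positionerext$---survives untouched. In the first case this produces $\singext\in\Cat$, and in the second $\singleton\otimes\singext\in\Cat$ (after positioning the extra singleton with $\positionerext$), each contradicting a hypothesis. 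This reduction is the main obstacle: the delicate points are verifying that capping the line part never touches the surviving extra singleton and that the process really terminates at zero or one line point regardless of the block structure of $\hat p$.

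Granting the parity claim, the remainder mirrors part (1). For $p\in\Cat$ with an even number of extra singletons, annihilating them in pairs with $(\singext\otimes\singext)^*$ yields $\hat p\in\Cat^{\lincol}$; conversely $p$ is rebuilt from $\hat p$ by creating extra singletons in pairs with $\singext\otimes\singext$ and positioning them with $\positionerext$, so $p\in\langle\Cat^{\lincol},\positionerext\rangle^{\tcol}$. Hence $\Cat=\langle\Cat^{\lincol},\positionerext\rangle^{\tcol}$, and Proposition \ref{P.prods}(2) gives $G=H\times\hat\Z_2$.
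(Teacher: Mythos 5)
Your proof is correct and takes essentially the same route as the paper: establish the category equalities by removing extra singletons via $\singext$ (resp.\ in adjacent pairs via $(\singext\otimes\singext)^*$) and reinserting/repositioning them via tensoring and $\positionerext$, then obtain the quantum group statements from Proposition \ref{P.prods} applied to $\Cat^{\lincol}$. Your contradiction argument for the parity claim---isolating the lone extra singleton with $\positionerext$, then rotating the line points into one row and capping them pairwise until $\singext$ or $\singleton\otimes\singext$ appears---is sound and in fact supplies the details of a step the paper merely asserts, namely that $\singext\notin\Cat$ and $\singleton\otimes\singext\notin\Cat$ force every partition in $\Cat$ to contain an even number of extra singletons.
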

\begin{proof}
Suppose $\singext\in\Cat$ and take any $p\in\Cat$. Then since $\singext\in\Cat$, we can remove all extra singletons in $p$ by composition and obtain some $q\in\Cat^{\lincol}$. The partition $p$ can be obtained by reversing this process, i.e. taking $q\in\Cat^{\lincol}$ and tensoring it with extra singletons $\singext$, which proves that $\Cat$ is generated by $\Cat^{\lincol}$ and $\singext$.

Similarly for the second case. The conditions $\singext\not\in\Cat$ and $\singleton\otimes\singext\not\in\Cat$ are equivalent to assuming that any partition $p\in\Cat$ contains an even number of extra singletons. This allows to reconstruct any partition $p\in\Cat$ from some non-colored version $q\in\Cat^{\lincol}$ using $\positionerext$ (using composition with partitions of the form $\idpart\cdots\idpart\positionerext\idpart\cdots\idpart$ we can move any extra singleton to any position).

The quantum group picture follows from Proposition \ref{P.prods}.
\end{proof}

\begin{prop}
\label{P.intgroup}
Let $\Cat$ be a category of partitions with extra singletons. Denote by $G$ the quantum group corresponding to $\Cat$, by $H$ the quantum group corresponding to $\Cat^{\lincol}$ and by $\tilde H$ the quantum group corresponding to $\Cat^{\tcol=1}$. Then
$$\tilde H\times E\subset G\subset H*\hat\Z_2.$$
\end{prop}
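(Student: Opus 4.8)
The plan is to reduce both inclusions to containments of categories with extra singletons and then invoke the order-reversing correspondence between such categories and their quantum groups: a larger category imposes more intertwiner relations on the universal C*-algebra, hence cuts out a smaller quantum group. Concretely, if $\Cat_1\subset\Cat_2$ are categories with extra singletons with associated quantum groups $G_1,G_2$, then the universal property yields a surjective $*$-homomorphism $C(G_1)\to C(G_2)$ matching fundamental representations, so $G_2\subset G_1$. Both desired inclusions then follow by sandwiching $\Cat$ between two categories whose quantum groups are already identified in Proposition \ref{P.prods} and Lemmas \ref{L.H} and \ref{L.tilH}.

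For the right-hand inclusion $G\subset H*\hat\Z_2$, I would start from the tautological containment $\Cat^{\lincol}\subset\Cat$. Since $\Cat$ is itself a category with extra singletons, it contains the category generated by $\Cat^{\lincol}$, that is $\langle\Cat^{\lincol}\rangle^{\tcol}\subset\Cat$. By Lemma \ref{L.H} the ordinary category $\Cat^{\lincol}$ corresponds to $H$, and by Proposition \ref{P.prods}(1) the category $\langle\Cat^{\lincol}\rangle^{\tcol}$ therefore corresponds exactly to $H*\hat\Z_2$. Applying the order-reversing principle to $\langle\Cat^{\lincol}\rangle^{\tcol}\subset\Cat$ yields $G\subset H*\hat\Z_2$.

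For the left-hand inclusion $\tilde H\times E\subset G$, I would use the category $\bar\Cat:=\langle\Cat,\singext\rangle^{\tcol}$ already appearing in the definition of $\Cat^{\tcol=1}$. Here $\Cat\subset\bar\Cat$ is immediate. The key identification, essentially contained in the proof of Lemma \ref{L.tilH}, is that $\bar\Cat$ corresponds to the quantum group $\bar G=(A,\tilde v\oplus 1)$ obtained from $G$ by imposing $r=1$; since $A=C(\tilde H)$ and forming the maximal tensor product with $\C$ changes nothing, $\bar G=(A,\tilde v\oplus 1)=\tilde H\times E$. Applying the order-reversing principle to $\Cat\subset\bar\Cat$ then gives $\bar G\subset G$, i.e. $\tilde H\times E\subset G$.

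The routine part is the bookkeeping above; the main thing to be careful about is the identification $\bar G=\tilde H\times E$ and the precise meaning of ``corresponds to''. I would spell out that the quantum-subgroup relation is witnessed by a genuine surjective $*$-homomorphism matching fundamental representations (both of size $N+1$, of block form $v\oplus r$), so that the order-reversing principle applies verbatim. As a consistency check for the left inclusion, one may instead combine Proposition \ref{P.prods2}(1), which gives $\bar\Cat=\langle\Cat^{\tcol=1},\singext\rangle^{\tcol}$, with Proposition \ref{P.prods}(3) applied to the ordinary category $\Cat^{\tcol=1}$; this route likewise identifies $\bar\Cat$ with $\tilde H\times E$.
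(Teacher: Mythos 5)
Your proof is correct and follows essentially the same route as the paper: sandwich $\Cat$ between $\langle\Cat^{\lincol}\rangle^{\tcol}$ and $\bar\Cat=\langle\Cat,\singext\rangle^{\tcol}$, identify the endpoint quantum groups as $H*\hat\Z_2$ and $\tilde H\times E$ via Lemma \ref{L.H}, Lemma \ref{L.tilH} and Proposition \ref{P.prods}, and apply the order-reversing correspondence between categories and quantum groups. You merely spell out two points the paper leaves implicit --- the universal-property argument behind the order reversal and the identification $\bar G=(A,\tilde v\oplus 1)=\tilde H\times E$ --- which is a welcome clarification but not a different argument.
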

\begin{proof}
According to Lemma \ref{L.H} and Proposition \ref{P.prods}, the quantum group $H*\hat\Z_2$ corresponds to the category $\langle\Cat^{\lincol}\rangle^{\tcol}$. According to Lemma \ref{L.tilH} and Proposition \ref{P.prods}, the quantum group $H\times E$ corresponds to the category $\langle\Cat^{\tcol=1},\tcol\rangle^{\tcol}=\langle\Cat,\tcol\rangle^{\tcol}$. We indeed have
\[\langle\Cat,\tcol\rangle^{\tcol}\supset\Cat\supset\langle\Cat^{\lincol}\rangle^{\tcol}.\qedhere\]
\end{proof}

\begin{ex}
Consider the category $\Cat:=\langle\Psa\rangle^{\tcol}$. It holds that $\Cat^{\lincol}=\langle\singleton\otimes\singleton\rangle$. Indeed, one can easily see that $\singleton\otimes\singleton$ is generated by $\Psa$ (compose $(\Psa\otimes\Psa)\cdot(\singext\otimes\singext)$), which proves the inclusion $\supset$. Conversely, one can see that all partitions in $\Cat$ have blocks of size at most two and we can also prove that $\positionerpart\not\in\Cat$ (otherwise we would have $\positionerext\in\Cat$ and hence $\globcolext\in\Cat$, which is not the case according to the classification in Proposition \ref{P.ncclass}). Hence, we have the inclusion $\subset$. Similarly, one can prove that $\Cat^{\tcol=1}=\langle\singleton\rangle$.

The category $\langle\singleton\otimes\singleton\rangle$ corresponds to the quantum group $B_N^{\# +}$, which is a subgroup of $O_N^+$ given by the relation $s:=\sum_kv_{ik}=\sum_kv_{kj}$ for all $i,j=1,\dots,N$ (originally defined in \cite{Web13}). The category $\langle\singleton\rangle$ corresponds to the quantum group $B_N^+$, which is a quantum subgroup of $B_N^{\# +}$ given by $s=1$ (originally defined in \cite{BS09}). According to Proposition \ref{P.intgroup} we have that $\Cat$ corresponds to a quantum group $G$ with
$$B_N^+\times E\subset G\subset B_N^{\# +}*\hat\Z_2.$$
In fact, as a subgroup of $B_N^{\#+}*\hat\Z_2$, it is given by the relation $r=s$ arising from the partition \Psa.

Note that $G$ is in fact, as a quantum group, isomorphic to $B_N^{\#+}$ (just take the $*$-isomorphism $C(G)\to C(B_N^{\#+})$ mapping $v_{ij}\mapsto v_{ij}$ and $r\mapsto s$). Nevertheless, $B_N^+$ is the maximal compact matrix quantum group $H$ that can be embedded in $G$ in the form $H\times E\subset G$ (that is, having a surjective $*$-homomorphism $C(G)\to C(H)$ mapping $r\mapsto 1$).

For those considerations, note the important distinction between two quantum groups being isomorphic (existence of a C*-algebra isomorphism that preserves the comultiplication) and being equal as {\em matrix} quantum groups (the fundamental representations must coincide as well; in particular, they must have the same size).
\end{ex}

\section{Classification of categories with extra singletons}
\label{sec.classification}

We are not going to solve the classification problem for categories of partitions with extra singletons explicitly. In the following subsection, we are going to treat some special cases. Then we are going to transform the rest to another problem for which we already have partial classification results.

\begin{rem}
\label{R.Fre}
As we already mentioned, our classification problem is closely related to the classification of partition categories with two self-dual colors, which was solved in the non-crossing case by Freslon in \cite{Fre19}. Let us state here explicitly the relation to our work. Strictly speaking we are solving two different problems as Freslon looks for quantum groups $G'$ with $S_N\subset G'\subset O_N^+ * O_N^+$ while we are looking for quantum groups $G$ with $S_N\times E\subset G \subset O_N^+ *\Z_2$. Nevertheless, any category of partitions with extra singletons can be also considered as a category with two self-dual colors. Hence, the classification of non-crossing categories of extra-singletons (summarized in Section \ref{secc.noncross}) was already intrinsically contained in \cite{Fre19} as well as many quantum group relations that are discussed in Section \ref{sec.prods}. On the other hand, in our work, we do not restrict to the non-crossing case and even here we state the results in much more explicit way (see Tables \ref{tab.globcol}, \ref{tab.nc}) than in \cite{Fre19}.

Also, note that the classification of categories with extra singletons is only a side aspect of our present article. The new products, as defined in our article, or results such as our Theorems \ref{T.A}, \ref{T.B}, and \ref{T.C} have not been considered in \cite{Fre19}.
\end{rem}

\subsection{Partitions of odd length}
\label{secc.odd}

We first show that the case of partitions of odd length can be reduced to the case of partitions of even length.

\begin{lem}
Let $\Cat$ be a category of partitions with extra singletons. Suppose $\Cat$ contains a partition of odd length. Then $\singext\in\Cat$ or $\singleton\in\Cat$.
\end{lem}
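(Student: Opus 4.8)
The plan is to argue by induction on the odd length $m$ of a partition $p\in\Cat$, decreasing the length by two at each step while staying inside $\Cat$, until a single point survives. A one-point partition is either $\singleton$ or $\singext$, which is precisely the assertion. The base case $m=1$ is immediate, so the whole content lies in the reduction step for $m\ge 3$.

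First I would exploit that $\Cat$ is closed under rotations. The duality morphisms $\pairpart,\uppairpart$ of the color $\lincol$ and $\singext\otimes\singext,(\singext\otimes\singext)^*$ of the color $\tcol$ all belong to $\Cat$, so rotating a point from one row to the other keeps the partition in $\Cat$ and preserves its length. Hence I may rotate every point of $p$ into the lower row, obtaining some $p_0\in\Cat(0,m)$ of the same odd length $m$, and I may moreover perform arbitrary cyclic shifts of this row, since one cyclic shift is just the composition of two rotations.

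The key observation is a parity argument on the colors. Reading the colors $c_1,\dots,c_m$ of the lower points cyclically, so that $c_m$ is a neighbour of $c_1$, the word cannot be alternating because $m$ is odd; therefore two cyclically adjacent points share the same color. After a cyclic shift I may assume these are the two leftmost points of the row, of common color $\lincol$ or $\tcol$. I then cap them off by composing $p_0$ from below with $\uppairpart\otimes(\text{identities})$ in the first case, or with $(\singext\otimes\singext)^*\otimes(\text{identities})$ in the second, the identities $\idpart$ and $\idext$ acting on the remaining points according to their colors. The composite $p_1$ lies in $\Cat$, has exactly two fewer points — the capped pair being deleted, in the $\tcol$-case through the two resulting loops — and so has odd length $m-2$; the induction hypothesis applied to $p_1$ then finishes the proof.

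The only point requiring care, though it is routine, is to check that the capping removes exactly the two chosen points and returns a legitimate partition with extra singletons: capping two $\tcol$-points erases two singleton blocks as loops, while capping two $\lincol$-points either merges two blocks or closes one block into a loop, in every case leaving all remaining $\tcol$-points as singletons. Crucially, no crossing is ever introduced, so the argument remains valid for an arbitrary — possibly non-crossing — category $\Cat$. This is exactly why the real heart of the proof is the parity/cyclic-adjacency observation rather than the combinatorial bookkeeping of the caps.
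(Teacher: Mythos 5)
Your proof is correct and follows essentially the same route as the paper's: rotate the partition into the lower row, use the parity of the odd length to find two (cyclically) adjacent points of the same color, cap them off to reduce the length by two, and induct down to a single point, which is $\singleton$ or $\singext$. You merely spell out details the paper leaves implicit --- rotation closure via the duality morphisms, a cyclic shift realized as two rotations, and the explicit caps $\uppairpart$ resp.\ $(\singext\otimes\singext)^*$ --- where the paper instead contracts the wrap-around (first-and-last) pair directly.
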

\begin{proof}
Suppose $p\in\Cat$ has odd length $l>1$. Without loss of generality, suppose that $p$ has lower points only, i.e. $p\in\Cat(0,l)$. Then there must be two neighboring points in $p$ (alternatively the first and the last point) of the same color, so they can be contracted. By induction, we can contract any partition of odd length to a partition of length one, i.e. a singleton or an extra singleton.
\end{proof}

For the case $\singext\in\Cat$, recall Proposition \ref{P.prods2} saying that the category $\Cat$ is determined by the one-colored category $\Cat^{\lincol}$ and the corresponding quantum group is of the form $G=H\times E$. Thus, the classification of such categories reduces to the one-colored case which is done \cite{RW16}.

The case, when the singleton $\singleton$ is contained in the category can be transformed to the case when it is not.

\begin{lem}
\label{L.singleton}
Let $\Cat$ be a category of partitions with extra singletons such that $\singleton\in\Cat$. Then $\Cat=\langle\tilde\Cat,\singleton\rangle^{\tcol}$, where
$$\tilde\Cat=\{p\in\Cat\mid\text{$|p|$ is even}\}.$$
\end{lem}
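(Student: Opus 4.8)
The plan is to establish the two inclusions separately, the nontrivial content being a parity-changing trick. For the easy direction $\langle\tilde\Cat,\singleton\rangle^{\tcol}\subset\Cat$, note that $\tilde\Cat\subset\Cat$ by definition and $\singleton\in\Cat$ by hypothesis; since $\Cat$ is closed under the category operations it must contain the whole category generated by $\tilde\Cat\cup\{\singleton\}$. (It is also worth recording that $\tilde\Cat$ is itself a category of partitions with extra singletons: the tensor product and composition of even-length partitions are again of even length, the involution preserves length, and the generators $\idpart,\pairpart,\idext,\singext\otimes\singext$ all have length two.)

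For the reverse inclusion $\Cat\subset\langle\tilde\Cat,\singleton\rangle^{\tcol}$ I would take an arbitrary $p\in\Cat(k,l)$ and argue by the parity of $|p|=k+l$. If $|p|$ is even then $p\in\tilde\Cat$ and there is nothing to do, so the only real case is $|p|$ odd. The idea is to append one strand to make the length even, landing the result in $\tilde\Cat$, and then delete that strand again. Since $\singleton\in\Cat$ and $\Cat$ is closed under tensor products, $p\otimes\singleton\in\Cat$, and its length $|p|+1$ is even, so in fact $p\otimes\singleton\in\tilde\Cat$. I would then recover $p$ by capping off the appended lower point with the upper singleton $\upsingleton=\singleton^*$, via the identity
$$p=(\idpart^{\otimes l}\otimes\upsingleton)\cdot(p\otimes\singleton).$$
Indeed, in this composition the first $l$ lower points of $p\otimes\singleton$ run straight through the identities, while the appended $\singleton$ meets $\upsingleton$ and closes into a loop, which is discarded when composing partitions; no other block of $p$ is affected, so the result is exactly $p$. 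The case $l=0$ is identical, with $\idpart^{\otimes 0}$ empty and the composition reducing to $\upsingleton\cdot(p\otimes\singleton)$.

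It then remains to verify that each factor on the right-hand side belongs to $\langle\tilde\Cat,\singleton\rangle^{\tcol}$. We have already placed $p\otimes\singleton$ in $\tilde\Cat$. The capping partition $\idpart^{\otimes l}\otimes\upsingleton$ is a tensor product of copies of $\idpart$, which has even length and lies in every category, hence in $\tilde\Cat$, together with $\upsingleton=\singleton^*$, which lies in $\langle\singleton\rangle^{\tcol}$ because categories are closed under involution; being a tensor product of elements of $\langle\tilde\Cat,\singleton\rangle^{\tcol}$, it lies there too. Hence the composition, which equals $p$, lies in $\langle\tilde\Cat,\singleton\rangle^{\tcol}$, completing the argument.

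I expect the only delicate point to be the bookkeeping in the displayed composition: one must confirm that the color patterns match (both $\singleton$ and $\upsingleton$ carry the color $\lincol$, so the composition is legitimately defined as a composition of partitions with extra singletons) and that capping the freshly appended singleton reproduces $p$ verbatim rather than merging with or reconnecting an existing block. Beyond this the lemma is routine.
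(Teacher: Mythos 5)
Your proof is correct and follows essentially the same route as the paper's: the paper likewise observes that for odd-length $p$ one has $p\otimes\singleton\in\tilde\Cat$ and then recovers $p\in\langle p\otimes\singleton,\singleton\rangle^{\tcol}$, with your displayed capping composition merely making explicit the step the paper leaves implicit. One small correction to your bookkeeping: in the capping partition $\idpart^{\otimes l}\otimes\upsingleton$ the identity strands must match the colors of the lower row of $p$, so any $\tcol$-colored point requires $\idext$ rather than $\idpart$ --- harmless, since $\idext$ belongs to every category of partitions with extra singletons by definition.
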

\begin{proof}
The inclusion $\supset$ is obvious. For the converse, consider $p\in\Cat$ with odd length. Then we have $p\otimes\singleton\in\tilde\Cat$, so $p\in\langle p\otimes\singleton,\singleton\rangle^{\tcol}\subset\langle\tilde\Cat,\singleton\rangle^{\tcol}$.
\end{proof}

\subsection{Connection between extra-singleton categories and two-colored categories}

\begin{defn}
\label{D.F}
We define a functor $F\colon\Part^{\tcol}\to\Part\twocol$ as follows.
\begin{itemize}
\item Consider an object in $\Part^{\tcol}$, that is, a word $w$ over $\Alphabet$. Then $F(w)$ is obtained by coloring all the points in $w$ with alternating white and black color starting with white and then deleting all extra singletons. In particular, two neighboring points in $F(w)$ have the same color if and only if the corresponding points in $w$ are separated by an odd number of $\tcol$.
\item Consider a partition $p\in\Part^{\tcol}(w_1,w_2)$. Then $F(p)$ is a two-colored partition with upper color pattern $F(w_1)$ and lower color pattern $F(w_2)$ with the same block structure as $p$ (ignoring the extra singletons).
\end{itemize}
\end{defn}

\begin{ex}
\label{ex.F}
As a typical example, take the partition from Equation \eqref{eq.pext}. We map it as follows
$$\BigPartition{
\Psingletons 0to0.25:5
\Psingletons 1to0.75:1
\Ppoint0 \Lt:1,4
\Ppoint1 \Ut:3,4
\Pblock 0to0.3:2,3
\Pblock 1to0.7:2,5
\Pline(2.5,0.3)(2.5,0.7)
}\mapsto
\BigPartition{
\Psingletons 0to0.25:5
\Psingletons 1to0.75:1
\Pblock 0to0.3:2,3
\Pblock 1to0.7:2,5
\Pline(2.5,0.3)(2.5,0.7)
\Ppoint1 \Pw:1,5
\Ppoint1 \Pb:2
\Ppoint0 \Pw:3,5
\Ppoint0 \Pb:2
}.$$
That is, we color the odd points (i.e.\ first, third and fifth on both rows) with white color and the even points (the second and fourth on both rows) with black. Then we erase all the triangles. Further examples are the following
$$
\idpart\mapsto\idpart[w/w],\quad \idext\otimes\idpart\mapsto\idpart[b/b],\quad \positionerext\mapsto\idpart[b/w],\quad\globcolext\mapsto\idpart[b/w]\otimes\idpart[w/b],
$$
Note, in particular, that we have $F(\idext\otimes p)=\overline{F(p)}$, where the bar denotes the color inversion $\wcol\leftrightarrow\bcol$.
Note also that the image of a partition $p\in\Part^{\tcol}$ is invariant with respect to adding a pair of consecutive extra singletons to $p$ and adding arbitrary amount of extra singletons to the end of the upper or lower row. Conversely, for any $\tilde p\in\Part\twocol$, its preimages differ only by such changes. In particular, any word $w$ over $\{\wcol,\bcol\}$ and any partition $\tilde p\in\Part\twocol$ has a unique shortest preimage.
\end{ex}

\begin{rem}
\label{R.FTp}
Since the partition structure is not changed by the functor $F$, it follows that the maps $T_p$ and $T_{F(p)}$ for a given $p\in\Part^{\tcol}(w_1,w_2)$ are exactly the same maps $(\C^N)^{\otimes k'}\to(\C^N)^{\otimes l'}$, where $k'$ and $l'$ are the lengths of the words $F(w_1)$ and $F(w_2)$. The only thing that changes is the interpretation of those maps. The map $T_p$ is considered as an intertwiner in $\Mor(u^{\otimes w_1},u^{\otimes w_2})$ with $u^{\lincol}=v$ and $u^{\tcol}=r$ for some quantum group $G=(C(G),v\oplus r)\subset O_N^+*\hat\Z_2$. In contrast, the map $T_{F(p)}$ is interpreted as an intertwiner in $\Mor(\tilde v^{\otimes F(w_1)},\tilde v^{\otimes F(w_2)})$, where $\tilde v^{\wcol}=\tilde v$ and $\tilde v^{\bcol}=\bar{\tilde v}$, for some quantum group $\tilde G=(C(\tilde G),\tilde v)\subset U_N^+$.
\end{rem}

\begin{ex}
\label{ex.FTp}
As an example, let us again take the partition $p=
\BigPartition{
\Psingletons 0to0.25:5
\Psingletons 1to0.75:1
\Ppoint0 \Lt:1,4
\Ppoint1 \Ut:3,4
\Pblock 0to0.3:2,3
\Pblock 1to0.7:2,5
\Pline(2.5,0.3)(2.5,0.7)
}$. We associate to it a map $T_p\colon \C^N\otimes\C^N\otimes\C^N\to\C^N\otimes\C^N\otimes\C^N$ by Equation \eqref{eq.Tp} ignoring the extra singletons. That is, we have
$$T_p(e_{i_1}\otimes e_{i_2}\otimes e_{i_3})=\delta_{i_2i_3}\,e_{i_2}\otimes e_{i_2}\otimes\sum_{k=1}^N e_k.$$
As we just mentioned, it coincides with the $T_p$ map associated to the partition
\Partition{
\Psingletons 0to0.3:3
\Psingletons 1to0.7:1
\Pblock 0to0.3:1,2
\Pblock 1to0.7:2,3
\Pline(1.5,0.3)(2.5,0.7)
} and hence also with the map associated to $F(p)=
\Partition{
\Psingletons 0to0.3:3
\Psingletons 1to0.7:1
\Pblock 0to0.3:1,2
\Pblock 1to0.7:2,3
\Pline(1.5,0.3)(2.5,0.7)
\Ppoint0 \Pw:2,3
\Ppoint0 \Pb:1
\Ppoint1 \Pw:1,3
\Ppoint1 \Pb:2
}$. The meaning of the partition $p$ is the relation
$$T_p(v\otimes v\otimes r\otimes r\otimes v)=(r\otimes v\otimes v\otimes r\otimes v)T_p,$$
which can also be written as
$$\delta_{i_1i_2}\sum_{l=1}^Nv_{lj_1}v_{i_1j_2}r^2v_{i_1j_3}=\delta_{j_2j_3}\sum_{k=1}^Nrv_{i_1j_2}v_{i_2j_2}rv_{i_3k}.$$
The meaning of the partition $F(p)$ is the relation
$$T_p(\tilde v\otimes \bar{\tilde v}\otimes \tilde v)=(\bar{\tilde v}\otimes \tilde v\otimes \tilde v)T_p,$$
which can also be written as
$$\delta_{i_1i_2}\sum_{l=1}^N\tilde v_{lj_1}\tilde v_{i_1j_2}^*\tilde v_{i_1j_3}=\delta_{j_2j_3}\sum_{k=1}^N\tilde v_{i_1j_2}^*\tilde v_{i_2j_2}\tilde v_{i_3k}.$$
We can say that the functor $F$ changes the corresponding relations by mapping $v_{ij}r\mapsto \tilde v_{ij}$ and $rv_{ij}\mapsto\tilde v_{ij}^*$. See also Proposition \ref{P.FG}.
\end{ex}

\begin{prop}
\label{P.F}
The map $F$ satisfies:
\begin{enumerate}
\item $F(w_1\otimes w_2)=F(w_1)\otimes F(w_2)$ or $F(w_1\otimes w_2)=F(w_1)\otimes\overline{F(w_2)}$.
\item For $p,q$ of even length, we have $F(p\otimes q)=F(p)\otimes F(q)$ or $F(p\otimes q)=F(p)\otimes\overline{F(q)}$.
\item If $p$ and $q$ are composable, then $F(p)$ and $F(q)$ are composable and $F(qp)=F(q)F(p)$,
\item $F(p^*)=F(p)^*$.
\end{enumerate}
Thus, $F$ is a unitary functor (by (3) and (4)), but not a monoidal functor (by (1) and (2)).
\end{prop}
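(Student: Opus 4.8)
The plan is to reduce all four statements to the two defining features of $F$: it leaves the block structure untouched once the extra singletons are forgotten, and it recolours each row by the alternating pattern that starts from white. The whole argument is therefore bookkeeping about (a) the parity that governs the alternating colouring and (b) the fact that extra singletons are isolated points. Since none of the four claims affects the block structure in a nontrivial way—tensoring takes disjoint unions of blocks, composition traces blocks through the middle row, and involution reflects them—the only real content lies in the behaviour of the colour patterns, together with one genuine subtlety in the composition step.

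First I would establish (1). Writing $F(w)$ as the alternating colouring of $w$ (white on odd positions, black on even) followed by deletion of the extra singletons, I note that in the concatenation $w_1\otimes w_2$ the first point of the $w_2$-block sits at position $|w_1|+1$, so it is white exactly when $|w_1|$ is even. As deleting extra singletons does not alter the relative colours, the colouring of the $w_2$-part of $F(w_1\otimes w_2)$ equals $F(w_2)$ when $|w_1|$ is even and $\overline{F(w_2)}$ when $|w_1|$ is odd; the parity of $|w_1|$ thus records which alternative occurs. Claim (2) then follows by applying (1) separately to the upper and lower colour patterns of $p\otimes q$, together with the fact that tensoring partitions juxtaposes their block structures (so the blocks of $F(p\otimes q)$ are exactly those of $F(p)\otimes F(q)$, colour inversion being irrelevant to blocks). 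The point where evenness of $p$ enters is that the shift in (1) is governed on the upper row by the parity of the upper length of $p$ and on the lower row by the parity of its lower length; since $|p|$ is even these parities agree, forcing the same alternative (bar or no bar) on both rows and yielding a genuine $F(p)\otimes F(q)$ or $F(p)\otimes\overline{F(q)}$.

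For (4) I would simply observe that the involution reflects across the horizontal axis, interchanging the two rows while preserving both colours and blocks; computing $F(p^*)$ directly gives upper pattern $F(w_2)$ and lower pattern $F(w_1)$ with the reflected block structure, and this is exactly $F(p)^*$, because the colouring of each row is computed afresh from white and so is unaffected by which row it occupies.

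The hard part will be (3), where the interaction of composition with the deletion of extra singletons must be handled carefully. Composability is immediate: the lower pattern of $p$ and the upper pattern of $q$ are the same word $w_2$, so $F$ sends both to the single two-coloured word $F(w_2)$, matching the rows that are glued. To identify the block structures I would use that every extra singleton is an isolated point: in the composite $qp$ each middle extra singleton of $p$ meets the corresponding middle extra singleton of $q$ and, being joined to nothing else, forms a closed loop contributing no block. Hence tracing blocks through the middle row of $qp$ visits only the $\lincol$-points, which is precisely the tracing performed in $F(q)F(p)$, so deleting the extra singletons before or after composing yields the same partition; the colour patterns agree since $F(qp)$ has upper pattern $F(w_1)$ and lower pattern $F(w_3)$, as does $F(q)F(p)$. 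I expect the delicate point to be arguing that these extra middle loops change only the loop count—irrelevant at the level of partitions, as opposed to the associated maps $T_p$—and never the resulting block structure; by contrast, the parity bookkeeping in (1)–(2) is routine.
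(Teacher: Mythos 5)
Your proof is correct and follows essentially the same route as the paper: the parity bookkeeping you give for (1) and (2) (inversion iff $|w_1|$ is odd; evenness of $|p|$ forcing the upper and lower lengths of $p$ to have equal parity, so the inversion hits both rows of $F(q)$ or neither) is exactly the content of the paper's proof, which otherwise declares the rest straightforward. Your treatment of (3) -- matched middle extra singletons closing into loops that are discarded, without affecting connectivity of the $\lincol$-blocks, with loop counts irrelevant at the level of set-theoretic (non-linear) partition composition -- correctly fills in the details the paper omits.
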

\begin{proof}
The proof is straightforward. Note that in (1) we apply the color inversion if and only if the length of $w_1$ is odd. In (2) we use the fact that, for $p\in\Part^{\tcol}(k,l)$ of even length, we have that either both $k$ and $l$ are even and we do not have to apply the color inversion for $F(q)$ or both $k$ and $l$ are odd and then we apply the color inversion for both the upper and the lower row of $F(q)$.
\end{proof}

\begin{defn}
We denote by $\Part^{\tcol}\even$ the set of all partitions with extra singletons having even length. From now on we will consider $F$ to be defined only on $\Part^{\tcol}\even$. In particular, given a category $\tilde\Cat\subset\Part\twocol$, we denote by $F^{-1}(\tilde\Cat)$ its preimage inside $\Part^{\tcol}\even$.
\end{defn}

\begin{thm} The map $F$ defines the following one-to-one correspondence.
\label{T.F}
\begin{enumerate}
\item Let $\Cat$ be a category of partitions with extra singletons of even length. Then $F(\Cat)$ is a category of two-colored partitions, which is invariant with respect to color inversions.
\item Let $\tilde\Cat$ be a category of two-colored partitions invariant with respect to color inversions. Then $F^{-1}(\tilde\Cat)$ is a category of partitions with extra singletons of even length.
\end{enumerate}
It holds that $F^{-1}(F(\Cat))=\Cat$ and $F(F^{-1}(\tilde\Cat))=\tilde\Cat$.
\end{thm}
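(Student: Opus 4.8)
The plan is to establish that $F$ is a bijection between the two stated classes of objects by exploiting the functorial properties already collected in Proposition \ref{P.F} together with the precise description of fibers given in Example \ref{ex.F}. The essential structural fact, recorded there, is that two partitions in $\Part^{\tcol}\even$ have the same image under $F$ if and only if they differ by inserting or deleting pairs of consecutive extra singletons and by appending or removing extra singletons at the ends of a row; equivalently, every two-colored partition has a \emph{unique shortest preimage}. This gives good control over $F^{-1}$ and will be the workhorse of the argument.

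First I would prove part (1). Given a category $\Cat$ of partitions with extra singletons of even length, I must check that $F(\Cat)$ is closed under the three category operations and contains the required generators. Closure under composition and involution is immediate from Proposition \ref{P.F}(3),(4). For the tensor product I would use Proposition \ref{P.F}(2): for $p,q\in\Cat$ of even length, $F(p\otimes q)$ equals either $F(p)\otimes F(q)$ or $F(p)\otimes\overline{F(q)}$; since $\idext\otimes q\in\Cat$ whenever $q\in\Cat$ and $F(\idext\otimes q)=\overline{F(q)}$ (Example \ref{ex.F}), the color-inverted version $\overline{F(q)}$ is also realized as an image, so in either case the tensor product lands in $F(\Cat)$. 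The same observation shows $F(\Cat)$ is invariant under color inversion. Finally I would verify the generating morphisms: $F(\idpart)=\idpart[w/w]$, $F(\idext\otimes\idpart)=\idpart[b/b]$, and $F$ of the pair partitions of both colors give the required two-colored pair partitions, all of which lie in $F(\Cat)$ because the relevant preimages lie in $\Cat$.

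Next I would prove part (2): if $\tilde\Cat\subset\Part\twocol$ is closed under the operations, contains the generators, and is color-inversion invariant, then $F^{-1}(\tilde\Cat)\subset\Part^{\tcol}\even$ is a category with extra singletons. Here closure is checked by pulling relations back through $F$. Given $p,q\in F^{-1}(\tilde\Cat)$, Proposition \ref{P.F}(3) shows $F(qp)=F(q)F(p)\in\tilde\Cat$ when composable, and Proposition \ref{P.F}(4) handles involution; for the tensor product I again invoke Proposition \ref{P.F}(2) and use that $\tilde\Cat$ is closed under color inversion, so both possible values of $F(p\otimes q)$ lie in $\tilde\Cat$. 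The presence of the generators $\idpart,\idext,\pairpart,\singext\otimes\singext$ in $F^{-1}(\tilde\Cat)$ follows because their images are exactly the required two-colored generators of $\tilde\Cat$ (note $F(\singext\otimes\singext)$ is the empty partition and $F(\idext)$ on an even-length argument behaves as color inversion, both automatically present).

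The final assertion $F^{-1}(F(\Cat))=\Cat$ and $F(F^{-1}(\tilde\Cat))=\tilde\Cat$ is where the main obstacle lies, and it is purely about the fibers rather than the category axioms. The inclusions $\Cat\subset F^{-1}(F(\Cat))$ and $F(F^{-1}(\tilde\Cat))\subset\tilde\Cat$ are formal. The substance is the reverse inclusion $F^{-1}(F(\Cat))\subset\Cat$: if $F(p)=F(p')$ with $p'\in\Cat$, I must show $p\in\Cat$. By the fiber description, $p$ is obtained from $p'$ by inserting/deleting consecutive pairs of extra singletons and adjusting trailing extra singletons. I would show each such elementary move preserves membership in a category with extra singletons: inserting a pair $\singext\otimes\singext$ is the tensor product with the pair partition $\singext\otimes\singext$ (a generator), deleting such a pair is composition with its adjoint, and repositioning extra singletons is achieved by composing with partitions of the form $\idpart\cdots\positionerext\cdots\idpart$ — exactly the manipulations already used in the proof of Proposition \ref{P.prods2}. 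Since both $p$ and $p'$ have even length and the moves connecting them can be taken within even-length partitions, each step stays inside $\Cat$, giving $p\in\Cat$. The same fiber bookkeeping, applied in reverse, yields $\tilde\Cat\subset F(F^{-1}(\tilde\Cat))$. The care needed to verify that all intermediate partitions remain of even length, and that the elementary moves are genuinely realized by the category operations with the available generators, is the crux of the proof.
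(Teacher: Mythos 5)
Your overall route is the same as the paper's: Proposition \ref{P.F} for the operations, tensoring with $\idext$ to realize color inversion, and the fiber description from Example \ref{ex.F} together with elementary moves realized by the category operations to get $F^{-1}(F(\Cat))=\Cat$. However, two steps are genuinely flawed. First, closure of $F(\Cat)$ under composition is \emph{not} ``immediate from Proposition \ref{P.F}(3)'': that proposition only says $F(qp)=F(q)F(p)$ when $p$ and $q$ themselves are composable, whereas a composable pair $\tilde p,\tilde q\in F(\Cat)$ may a priori only be given to you through preimages whose $\tcol$/$\lincol$ color patterns fail to match --- for instance a lower pattern $\lincol$ against an upper pattern $\tcol\tcol\lincol$, which have the same $F$-image. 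The missing idea, and in fact the one point on which the paper's proof of part (1) spends its effort, is to \emph{choose} composable preimages, e.g.\ $p$ with the shortest possible lower row and $q$ with the shortest possible upper row; and to know that these adjusted preimages still lie in $\Cat$ one needs precisely the fiber-closure fact that you only establish in your final paragraph. Your proposal thus contains the ingredients for a repair, but as written this step fails and the logical order is inverted.

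Second, in the fiber argument you assert that ``repositioning extra singletons'' is achieved by composing with partitions of the form $\idpart\cdots\positionerext\cdots\idpart$. This is wrong on two counts. The partition $\positionerext$ does \emph{not} belong to a general category of partitions with extra singletons --- by Proposition \ref{P.prods} its presence essentially forces the tensor product $H\times\hat\Z_2$, and the manipulation you borrow from the proof of Proposition \ref{P.prods2} is licensed there only because $\positionerext\in\Cat$ is a hypothesis of that case. Moreover, repositioning is not a fiber move at all: moving an extra singleton past a point of color $\lincol$ flips the alternating colors of all subsequent points and hence changes the $F$-image. Fortunately your initial fiber description (inserting/deleting \emph{consecutive} pairs of extra singletons and extra singletons at the ends of rows) is the correct one and suffices, exactly as in the paper's proof: pairs anywhere inside a row are inserted or deleted by composing with identity-padded copies of $\singext\otimes\singext$ and its adjoint (note that plain tensoring with $\singext\otimes\singext$, as you propose, only appends at the right end of a row), and end-of-row extra singletons are handled by tensoring with $\idext$. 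Deleting the repositioning move, making these realizations explicit, and then feeding the resulting fiber closure back into the composition step of part (1) repairs the proof.
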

\begin{proof}
Consider a category $\Cat\subset\Part\even^{\tcol}$. As mentioned in Example \ref{ex.F}, we have that $F(\idext\otimes p)$ is the color inversion of $F(p)$, so $F(\Cat)$ is indeed closed under color inversions. From Proposition \ref{P.F} it directly follows that $F(\Cat)$ is closed under involution. It is also closed under tensor products since we have that either $F(p)\otimes F(q)=F(p\otimes q)$ or $F(p)\otimes F(q)=F(p\otimes\idext\otimes q)$. To check that $F(\Cat)$ is closed under compositions, it is enough to prove that for any composable pair $\tilde p,\tilde q\in F(\Cat)$ there exist $p,q\in\Cat$ composable such that $\tilde p=F(p)$ and $\tilde q=F(q)$. It suffices to take $p$ with the shortest possible lower row (with no extra singletons at the end and neighboring extra singletons anywhere) and $q$ with the shortest possible upper row.

The part (2) is proven similarly.

The equality $F(F^{-1}(\tilde\Cat))=\tilde\Cat$ is surely satisfied since it holds for any map $F$.

Since $\singext\otimes\singext\in\Cat$ for any category $\Cat$, we have that any category is closed under adding or removing pairs of neighboring extra singletons. Since also $\idext\in\Cat$, we can also add arbitrary amount of extra singletons to the end of lower and upper row. Consequently, any category $\Cat$ contains with any element $p\in\Cat$ the whole preimage $F^{-1}(F(p))$. Therefore, we also have $F^{-1}(F(\Cat))=\Cat$. This also proves that the described relationship is indeed a one-to-one correspondence.
\end{proof}

\begin{prop}
\label{P.Fgen}
Let $S\subset\Part^{\tcol}$ be a set of partitions with extra singletons. Then $F(\langle S\rangle^{\tcol})=\langle F(S)\rangle$.
\end{prop}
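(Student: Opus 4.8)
The plan is to prove the two inclusions $F(\langle S\rangle^{\tcol})\subset\langle F(S)\rangle$ and $\langle F(S)\rangle\subset F(\langle S\rangle^{\tcol})$ separately, using Theorem \ref{T.F} and Proposition \ref{P.F} as the main tools. The guiding idea is that $\langle S\rangle^{\tcol}$ is the smallest extra-singleton category containing $S$, while $\langle F(S)\rangle$ is the smallest two-colored category containing $F(S)$, so the statement is essentially that $F$ commutes with the ``generated category'' operation. A preliminary remark: since $F$ is defined only on $\Part^{\tcol}\even$, I should first note that $\langle S\rangle^{\tcol}$ need not consist only of even-length partitions; however, by the discussion preceding Theorem \ref{T.F}, the even-length part of any extra-singleton category is again such a category, so without loss of generality I may restrict attention to the even-length partitions throughout and apply $F$ only there.

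For the inclusion $F(\langle S\rangle^{\tcol})\subset\langle F(S)\rangle$, I would argue that $F^{-1}(\langle F(S)\rangle)$ is an extra-singleton category (by Theorem \ref{T.F}(2), since $\langle F(S)\rangle$ is trivially invariant under color inversion once one checks this) containing $S$, hence it contains $\langle S\rangle^{\tcol}$; applying $F$ and using $F(F^{-1}(\tilde\Cat))=\tilde\Cat$ from Theorem \ref{T.F} then yields the claim. The one point needing care here is that $\langle F(S)\rangle$ may fail to be color-inversion invariant if $S$ itself is not, so I would instead take $F^{-1}$ of the color-inversion closure of $\langle F(S)\rangle$; but observe that $F(S)$ is automatically such that this closure is still generated by $F(S)$ together with its inversions, and since $F(\idext\otimes p)=\overline{F(p)}$ (Example \ref{ex.F}), the inversions of elements of $F(S)$ already lie in $F(\langle S\rangle^{\tcol})$. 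This is the step I expect to be the main obstacle, and I would resolve it by working throughout with the color-inversion invariant category generated by $F(S)$, which by Theorem \ref{T.F} corresponds to a genuine extra-singleton category.

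For the reverse inclusion $\langle F(S)\rangle\subset F(\langle S\rangle^{\tcol})$, I would first check that $F(\langle S\rangle^{\tcol})$ is a two-colored category: this is exactly Theorem \ref{T.F}(1), which gives that the image of an extra-singleton category is a (color-inversion invariant) category of two-colored partitions. Since $F(\langle S\rangle^{\tcol})$ is a category containing $F(S)$, it must contain the smallest such category $\langle F(S)\rangle$. Combining the two inclusions gives equality. The essential content is thus entirely carried by Theorem \ref{T.F} and Proposition \ref{P.F}: the former guarantees that $F$ and $F^{-1}$ map categories to categories and are mutually inverse on the relevant classes, and the latter guarantees the functoriality (compatibility with composition, involution, and — up to color inversion — tensor product) that makes ``smallest category containing'' transport correctly across $F$. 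I would keep the write-up short by phrasing everything in terms of the Galois-type correspondence already established, rather than tracking individual generators through the category operations.
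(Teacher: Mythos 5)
Your overall route is exactly the paper's: both inclusions are obtained from Theorem \ref{T.F}, namely $F(\langle S\rangle^{\tcol})$ is a two-colored category containing $F(S)$ (giving $\supset$), and $F^{-1}$ of a two-colored category is an extra-singleton category containing $S$, hence containing $\langle S\rangle^{\tcol}$, after which one applies $F(F^{-1}(\tilde\Cat))=\tilde\Cat$ (giving $\subset$). One of your two precautions is unnecessary: for $F(S)$ to be defined at all one must have $S\subset\Part^{\tcol}\even$, and then $\langle S\rangle^{\tcol}\subset\Part^{\tcol}\even$ automatically, since the base partitions $\idpart$, $\idext$, $\pairpart$, $\singext\otimes\singext$ have even length and the category operations preserve the parity of the total length (for a composition of $p\in\Part^{\tcol}(k,l)$ with $q\in\Part^{\tcol}(l,m)$ note $k+m=(k+l)+(l+m)-2l$). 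So no restriction step is needed.

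Your second precaution, however, touches a genuine subtlety, and your repair of it is circular as written. You are right that Theorem \ref{T.F}(2) applies only to color-inversion invariant categories and that $\langle F(S)\rangle$ is not obviously invariant; the paper's own one-line proof applies Theorem \ref{T.F} to $F^{-1}(\langle F(S)\rangle)$ without comment, so it elides the same point. But your fix does not close it: passing to the inversion closure $\tilde\Cat':=\langle F(S)\cup\overline{F(S)}\rangle$ (which is indeed invariant, since color inversion commutes with the category operations) and applying $F^{-1}$ yields only $F(\langle S\rangle^{\tcol})\subset\tilde\Cat'$, hence $\langle F(S)\rangle\subset F(\langle S\rangle^{\tcol})\subset\langle F(S),\overline{F(S)}\rangle$. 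To collapse this chain to the stated equality you would need $\overline{F(S)}\subset\langle F(S)\rangle$, whereas your observation $F(\idext\otimes p)=\overline{F(p)}$ only gives $\overline{F(S)}\subset F(\langle S\rangle^{\tcol})$ --- a bound by the very set you are trying to estimate, which cannot be fed back into the argument. Note that the automatic closure properties of two-colored categories (rotations, verticolor reflection) produce the \emph{reversed-and-inverted} partition $\tilde q$, not the plain inversion $\bar q$, so $\overline{F(p)}\in\langle F(p)\rangle$ is not free; indeed, the paper's insistence in Theorem \ref{T.F} on inversion-invariant categories indicates that invariance is not regarded as automatic. What your patched argument honestly proves is $F(\langle S\rangle^{\tcol})=\langle F(S)\cup\overline{F(S)}\rangle$ --- a version that, as it happens, suffices for the paper's applications of the proposition (in Lemma \ref{L.FPsi} the inversion $\idpart[w/b]$ of the extra generator is the verticolor reflection of $\idpart[b/w]$ and the identity generators recolor freely, and in Lemma \ref{L.FAlt} the category $\Alt\Cat$ is by definition generated by \emph{all} alternating colorings, i.e.\ by $F(\Cat)$ together with $\overline{F(\Cat)}$). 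For the proposition as literally stated, the missing ingredient is precisely the color-inversion invariance of $\langle F(S)\rangle$, which neither your write-up nor, tacitly, the paper's proof establishes; you should either prove $\overline{F(S)}\subset\langle F(S)\rangle$ separately or state the result with the symmetrized generating set.
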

\begin{proof}
The assertion follows from Theorem \ref{T.F}, namely from the fact that both $F$ and $F^{-1}$ map a category to a category. We surely have the inclusion $\supset$ since obviously $F(S)\subset F(\langle S\rangle^{\tcol})$ and $F(\langle S\rangle^{\tcol})$ is a category, so it must contain the category generated by $F(S)$. For the converse inclusion, we surely have $S\subset F^{-1}(\langle F(S)\rangle)$. Since we have a category on the right-hand side, it must contain $\langle S\rangle^{\tcol}$ and then we just apply $F$ to both sides.
\end{proof}

\begin{defn}
Consider a quantum group $G\subset O_N^+*\hat\Z_2$ with fundamental representation $v\oplus r$. Denote $\tilde v_{ij}:=v_{ij}r$ and let $A$ be the C*-subalgebra of $C(G)$ generated by $\tilde v_{ij}$. Then $\tilde G:=(A,u)$ is called the \emph{glued version} of $G$.
\end{defn}

\begin{rem}
It is easy to check that the comultiplication on $G$ satisfies $\Delta(\tilde v_{ij})=\sum_k\tilde v_{ik}\otimes\tilde v_{kj}$, so its restriction provides a comultiplication on $\tilde G$. Thus, $\tilde G$ is a compact matrix quantum group.
\end{rem}

\begin{rem}
The definition generalizes the glued product construction from Subsection \ref{secc.glued}. It is easy to see that $G\tilstar\hat\Z_2$ is the glued version of $G*\hat\Z_2$ and $G\tiltimes\hat\Z_2$ is the glued version of $G\times\hat\Z_2$.
\end{rem}

\begin{prop}
\label{P.FG}
Consider a category $\Cat\subset\Part^{\tcol}\even$. Denote by $G=(C(G),v\oplus r)$ the quantum group corresponding to $\Cat$ and by $\tilde G=(C(\tilde G),\tilde v)$ the quantum group corresponding to the category $\tilde\Cat:=F(\Cat)$. Then there is an injective $*$-homomorphism $\iota\colon C(\tilde G)\to C(G)$ mapping $\tilde v_{ij}\mapsto v_{ij}r$. In other words, $\tilde G$ is the glued version of $G$.
\end{prop}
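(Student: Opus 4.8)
The plan is to produce $\iota$ through Tannaka--Krein duality, by showing that the glued version of $G$ already \emph{is} the quantum group attached to $\tilde\Cat=F(\Cat)$. Set $\tilde v_{ij}:=v_{ij}r\in C(G)$ and let $A\subset C(G)$ be the C*-subalgebra they generate. First I would check that $\tilde v=(\tilde v_{ij})$ is biunitary: using $r=r^*$, $r^2=1$ and $vv^t=v^tv=1$ one computes $\sum_k\tilde v_{ik}\tilde v_{jk}^*=\sum_k v_{ik}v_{jk}=\delta_{ij}$ and $\sum_k\tilde v_{ki}^*\tilde v_{kj}=r\bigl(\sum_k v_{ki}v_{kj}\bigr)r=\delta_{ij}$, and the same for $\bar{\tilde v}=(rv_{ij})$. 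Hence $(A,\tilde v)$ is a compact matrix quantum group, namely the glued version of $G$, and it suffices to identify it with $\tilde G$.

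The heart of the matter is the following identity relating the two families of corepresentations. For a word $w$ over $\{\tcol,\lincol\}$ of \emph{even} length I claim that, after the canonical identification of the one-dimensional $\tcol$-factors with scalars,
\[\tilde v^{\otimes F(w)}=u^{\otimes w}\qquad\text{as matrices over }C(G),\]
where $\tilde v^{\wcol}=\tilde v$ and $\tilde v^{\bcol}=\bar{\tilde v}$. To prove it I would write a single matrix entry of each side as a word in the $v_{s_nx_n}$ and $r$: in $u^{\otimes w}$ each $\lincol$ contributes a factor $v$ and each $\tcol$ a factor $r$, in the order dictated by $w$; in $\tilde v^{\otimes F(w)}$ each surviving point contributes $v_{s_nx_n}r$ if it is white and $rv_{s_nx_n}$ if it is black (white $\leftrightarrow$ odd position in $w$). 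Reducing both words by $r^2=1$ and comparing, gap by gap, the parity of the number of $r$'s sitting between consecutive $v$'s, one finds that all interior gaps and the leftmost gap always agree, while the rightmost gap differs by $r^{|w|\bmod 2}$; for $|w|$ even the two reduced words therefore coincide. This parity bookkeeping is the main obstacle, and it is exactly where the restriction to $\Part^{\tcol}\even$ is used: the example $\Psa$, whose two color patterns carry different numbers of extra singletons, shows that no naive $r\mapsto-r$ argument can replace it.

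With the lemma at hand I would match the intertwiner spaces. Given two-colored words $\tilde w_1,\tilde w_2$, choose for each an \emph{even-length} preimage $w_1,w_2$ under $F$ (possible after appending at most one trailing $\tcol$); then $\tilde v^{\otimes\tilde w_i}=u^{\otimes w_i}$, so that the defining relation $T\tilde v^{\otimes\tilde w_1}=\tilde v^{\otimes\tilde w_2}T$ becomes $Tu^{\otimes w_1}=u^{\otimes w_2}T$ and, since $A\hookrightarrow C(G)$,
\[\Mor_{(A,\tilde v)}(\tilde v^{\otimes\tilde w_1},\tilde v^{\otimes\tilde w_2})=\Mor_G(u^{\otimes w_1},u^{\otimes w_2})=\spanlin\{T_p\mid p\in\Cat(w_1,w_2)\},\]
the last equality being the construction of $G$ from $\Cat$. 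As $|w_1|+|w_2|$ is even, every such $p$ lies in $\Part^{\tcol}\even$; by Theorem \ref{T.F} the category $\Cat$ contains the whole $F$-fibre, so $p\mapsto F(p)$ maps $\Cat(w_1,w_2)$ \emph{onto} $\tilde\Cat(\tilde w_1,\tilde w_2)$, and by Remark \ref{R.FTp} it preserves the associated maps, $T_p=T_{F(p)}$. Hence the span above equals $\spanlin\{T_{\tilde p}\mid\tilde p\in\tilde\Cat(\tilde w_1,\tilde w_2)\}=\Mor_{\tilde G}(\tilde v^{\otimes\tilde w_1},\tilde v^{\otimes\tilde w_2})$.

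Thus $(A,\tilde v)$ and $\tilde G$ have identical intertwiner spaces on all tensor powers of the fundamental corepresentation, so by Tannaka--Krein duality the assignment $\tilde v_{ij}\mapsto v_{ij}r$ extends to a $*$-isomorphism $C(\tilde G)\xrightarrow{\sim}A$. Composing it with the inclusion $A\hookrightarrow C(G)$ gives the desired injective $*$-homomorphism $\iota$ and identifies $\tilde G$ with the glued version of $G$.
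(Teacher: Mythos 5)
Your first three paragraphs are sound, and in places slightly more careful than the paper's own write-up: the biunitarity of $\tilde v$, the parity identity $\tilde v^{\otimes F(w)}=u^{\otimes w}$ for $|w|$ even, and the matching of the partition spans via Theorem \ref{T.F} and Remark \ref{R.FTp} (using that $\Cat=F^{-1}(\tilde\Cat)$ contains, for each $\tilde p\in\tilde\Cat(\tilde w_1,\tilde w_2)$, a preimage with the prescribed even-length color patterns) are all correct. This part of your argument is essentially the paper's proof of the \emph{existence} of $\iota$: the relations defining $C(\tilde G)$ hold for the elements $\tilde v_{ij}':=v_{ij}r$, so universality of $C(\tilde G)$ yields a $*$-homomorphism $\iota\colon C(\tilde G)\to A\subset C(G)$, surjective onto $A$.

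The gap is in your final step. Equality of intertwiner spaces of $(A,\tilde v)$ and $\tilde G$ does \emph{not} give a $*$-isomorphism $C(\tilde G)\cong A$ by Tannaka--Krein: the representation category determines the quantum group only at the level of the dense polynomial Hopf $*$-algebra of matrix coefficients, not at the level of the C*-completion. Indeed, all C*-completions of one and the same such Hopf $*$-algebra (universal, reduced, and anything in between) have literally identical intertwiner spaces, since an intertwining relation $T\tilde v^{\otimes \tilde w_1}=\tilde v^{\otimes\tilde w_2}T$ is a polynomial identity and the polynomial algebra embeds into every completion. In this paper's framework $C(\tilde G)$ is by definition the \emph{universal} C*-algebra on the relations of $\tilde\Cat$, while $A$ is a concrete C*-subalgebra of $C(G)$ about which nothing of the sort is known a priori; your intertwiner computation therefore produces exactly the surjection $\iota\colon C(\tilde G)\twoheadrightarrow A$ you already had, and its injectivity --- i.e., the statement that $A$ carries the universal norm on its polynomial algebra --- is precisely the nontrivial content of the proposition, making the argument circular at this point. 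The paper closes this hole with an ``unglueing'' homomorphism: one checks, again using the even-length hypothesis and the same kind of parity identity, that there is a $*$-homomorphism $\beta\colon C(G)\to M_2(C(\tilde G))$ with $\beta(r)=\begin{pmatrix}0&1\\1&0\end{pmatrix}$ and $\beta(v_{ij})=\begin{pmatrix}0&\tilde v_{ij}\\ \tilde v_{ij}^*&0\end{pmatrix}$; here universality of $C(G)$ is available, so the existence of $\beta$ is a relations check of the same kind as for $\iota$. Then $(\beta\circ\iota)(\tilde v_{ij})=\begin{pmatrix}\tilde v_{ij}&0\\0&\tilde v_{ij}^*\end{pmatrix}$, so $\beta\circ\iota$ is injective (its first diagonal component is the identity on $C(\tilde G)$), whence $\iota$ is injective. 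Some such device --- trading injectivity \emph{into} $C(G)$ for a homomorphism \emph{out of} the universal algebra $C(G)$ --- is genuinely needed and cannot be extracted from intertwiner spaces alone.
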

\begin{proof}

To prove the existence of a $*$-homomorphism $\iota\colon C(\tilde G)\to C(G)$ mapping $\tilde v_{ij}\mapsto v_{ij}r$, we need to show that the elements $\tilde v_{ij}':=v_{ij}r\in C(G)$ satisfy all the relations of the generators $\tilde v_{ij}\in C(\tilde G)$. We essentially did this already in Remark~\ref{R.FTp}. Indeed, all relations in $C(\tilde G)$ are of the form $T_{\tilde p}\tilde v^{\otimes \tilde w_1}=\tilde v^{\otimes\tilde w_2}T_{\tilde p}$ for some $\tilde p\in\tilde\Cat(\tilde w_1,\tilde w_2)$. Take any preimage $p\in\Cat(w_1,w_2)$, $p\in F^{-1}(\tilde p)$. We showed in Remark~\ref{R.FTp} that $T_p=T_{\tilde p}$. One can also check that $\tilde v'^{\otimes \tilde w_1}=u^{\otimes w_1}$ and $\tilde v'^{\otimes\tilde w_2}=u^{\otimes w_2}$ (as usual, we take $u=v\oplus r$). So, we have
$$T_{\tilde p}\tilde v'^{\otimes\tilde w_1}=T_p u^{\otimes w_1}=u^{\otimes w_2}T_p=\tilde v'^{\otimes \tilde w_2}T_{\tilde p}.$$

To prove the injectivity, we are going to use a similar trick as in \cite{TW17}. We will show that there is a $*$-homomorphism $\beta\colon C(G)\to M_2(C(\tilde G))$ mapping
$$r\mapsto r':=\begin{pmatrix}0&1\\1&0\end{pmatrix},\quad v_{ij}\mapsto v_{ij}':=\begin{pmatrix}0&\tilde v_{ij}\\\tilde v_{ij}^*&0\end{pmatrix}.$$
If we prove that such a homomorphism exists, then it is easy to check that
$$(\beta\circ\iota)\tilde v_{ij}=\tilde v_{ij}'':=\begin{pmatrix}\tilde v_{ij}&0\\0&\tilde v_{ij}^*\end{pmatrix},$$
so $\beta\circ\iota$ is obviously injective, which implies the injectivity of $\iota$.

%The proof of existence of such a homomorphism is similar to the proof of existence of $\iota$. We have to prove that the elements $r'$ and $v_{ij}'$ satisfy the same relations as the generators $r$ and $v_{ij}$. To see this, note that all the relations for $r$ and $v_{ij}$ correspond to some partition $p\in\Part^{\tcol}$ of even length; therefore, all of them contain just monomials of even length. Any monomial in $v_{ij}'$'s and $r'$ of even length can be expressed in terms of $\tilde v_{ij}''$'s and $\tilde v_{ij}''^*$'s. Indeed, notice that $v_{ij}'r=\tilde v_{ij}''$, so $rv_{ij}'=(v_{ij}'r)^*=\tilde v_{ij}''$ and $v_{ij}'v_{kl}'=v_{ij}'rrv_{kl}'=\tilde v_{ij}''\tilde v_{kl}''^*$. In fact, using similar argumentation as above, we can see that applying $v_{ij}\mapsto v_{ij}'$ and $r\mapsto r'$ to a relation corresponding to some partition $p\in\Cat$, we get a relation corresponding to the partition $\tilde p\in\tilde\Cat$ in the variables $\tilde v_{ij}''$.
The proof of existence of such a homomorphism $\beta$ is similar to the proof of existence of $\iota$. We have to prove that the elements $r'$ and $v_{ij}'$ satisfy the same relations as the generators $r$ and $v_{ij}$. Again, we have that all the relations for $r$ and $v_{ij}$ are of the form $T_pu^{\otimes w_1}=u^{\otimes w_2}T_p$ for $p\in\Cat(w_1,w_2)$. Since we assume $\Cat\subset\Part^{\tcol}\even$, we have that $p$ is of even length. Without loss of generality, we can assume that both $w_1$ and $w_2$ have even length (otherwise, consider $p\otimes\idext$, which induces obviously an equivalent relation). Any monomial in $v_{ij}'$'s and $r'$ of even length can be expressed in terms of $\tilde v_{ij}''$'s and $\tilde v_{ij}''^*$'s. Indeed, notice that $v_{ij}'r'=\tilde v_{ij}''$, so $r'v_{ij}'=(v_{ij}'r')^*=\tilde v_{ij}''^*$ and $v_{ij}'v_{kl}'=v_{ij}'r'r'v_{kl}'=\tilde v_{ij}''\tilde v_{kl}''^*$. Consequently, one can see that $u'^{\otimes w_1}=\tilde v''^{\otimes F(w_1)}$ and also $u'^{\otimes w_2}=\tilde v''^{\otimes F(w_2)}$ (denoting $u'=v'\oplus r'$). Thus, using also the equality $T_p=T_{F(p)}=T_{\overline{F(p)}}$, we have
\[T_pu'^{\otimes w_1}=T_p\tilde v_{ij}''^{\otimes F(w_1)}=\tilde v_{ij}''^{\otimes F(w_2)}T_p=u'^{\otimes w_2}T_p.\qedhere\]
\end{proof}

\begin{ex}
\label{E.Un}
In \cite{Ban97}, it was proven that $U_N^+=O_N^+\tilstar\hat\Z$. In \cite[Proposition 6.20]{TW17}, it was proven that we can actually exchange $\Z$ for $\Z_2$, so we have $U_N^+=O_N^+\tilstar\hat\Z_2$. The latter is a simple consequence of Proposition \ref{P.FG}. Indeed, the quantum group $O_N^+*\Z_2$ corresponds to the smallest category with extra singletons $\Cat:=\langle\rangle^{\tcol}$. The quantum group $U_N^+$ corresponds to the smallest two-colored category $\tilde\Cat:=\langle\rangle\twocol$, which is the image of $\Cat$ under $F$. So, $U_N^+$ is a glued version of $O_N^+*\hat\Z_2$.
\end{ex}

\subsection{An application to the theory of two-colored partitions}
This correspondence not only brings classification results for categories of partitions with extra singletons, but also conversely it brings new insight to the theory of two-colored unitary partitions.

Recall the forgetful functor $\Psi\colon\Part\twocol\to\Part$ acting on two-colored partitions by forgetting the color patterns \cite{TW18}.

\begin{lem}
\label{L.FPsi}
Let $\Cat\subset\Part$ be a category of partitions such that $\singleton\not\in\Cat$. Then
$$F(\langle\Cat,\positionerext\rangle^{\tcol})=\Psi^{-1}(\Cat).$$
\end{lem}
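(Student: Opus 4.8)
The plan is to prove the two inclusions $\subset$ and $\supset$ separately, using the functorial properties of $F$ established in Proposition \ref{P.F} and the generator formula of Proposition \ref{P.Fgen}. First I would reduce the problem to generators. By Proposition \ref{P.Fgen}, we have $F(\langle\Cat,\positionerext\rangle^{\tcol})=\langle F(\Cat\cup\{\positionerext\})\rangle=\langle F(\Cat),F(\positionerext)\rangle$. Recall from Example \ref{ex.F} that $F(\positionerext)=\idpart[b/w]$, the color-flip identity (the ``turning'' partition that converts a white point into a black one). So the left-hand side is the two-colored category $\langle F(\Cat),\idpart[b/w]\rangle$, and the goal becomes to show $\langle F(\Cat),\idpart[b/w]\rangle=\Psi^{-1}(\Cat)$. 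Here I would note that for $p\in\Cat$ of even length (viewed as a one-colored partition, hence an extra-singleton partition with no singletons), $F(p)$ is simply $p$ with the alternating white/black coloring; forgetting the colors gives back $p$, i.e.\ $\Psi(F(p))=p$.

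For the inclusion $\subset$, I would check that both families of generators lie in $\Psi^{-1}(\Cat)$. The partition $\idpart[b/w]$ maps under $\Psi$ to $\idpart\in\Cat$, so $\idpart[b/w]\in\Psi^{-1}(\Cat)$. For the generators $F(p)$ with $p\in\Cat$, the key subtlety is that $\Cat$ may contain partitions of odd length, whereas $F$ is only defined on $\Part^{\tcol}\even$; but since $\singleton\notin\Cat$ and $\Cat$ is a one-colored category, one checks that $F(p)$ (taken for even-length $p$, or after pairing up an odd-length $p$) forgets to an element of $\Cat$, hence lies in $\Psi^{-1}(\Cat)$. Since $\Psi^{-1}(\Cat)$ is a category (the preimage of a category under the monoidal functor $\Psi$), it contains the generated category, giving $\subset$.

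The harder inclusion is $\supset$, and this is where I expect the main obstacle. I must show every $\tilde p\in\Psi^{-1}(\Cat)$ — i.e.\ every two-colored partition whose underlying uncolored partition $\Psi(\tilde p)$ lies in $\Cat$ — can be built from $F(\Cat)$ and $\idpart[b/w]$. The idea is that $\idpart[b/w]$ (and its adjoint/tensor variants) lets us freely recolor any point, so the coloring of $\tilde p$ is irrelevant: starting from the canonically alternating-colored version of $\Psi(\tilde p)$, which is exactly $F(\Psi(\tilde p))\in F(\Cat)$, I would use compositions with strings of $\idpart[b/w]$ and $\idpart[w/b]=(\idpart[b/w])^*$ on selected legs to convert it into the arbitrary coloring pattern of $\tilde p$. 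The genuine difficulty is that recoloring a single point changes the color pattern of a whole partition and one must verify that every target two-colored pattern is reachable — in particular that parities work out. The assumption $\singleton\notin\Cat$ is what prevents odd-length obstructions and guarantees the two colors behave symmetrically; I would invoke the color-inversion invariance of $F(\Cat)$ (from Theorem \ref{T.F}, since $\langle\Cat,\positionerext\rangle^{\tcol}$ contains $\idext\otimes p$ for each $p$) to supply both $\idpart[b/w]$ and $\idpart[w/b]$, so that recoloring is available in both directions. Once arbitrary recoloring is justified, every $\tilde p$ with $\Psi(\tilde p)\in\Cat$ is obtained, completing $\supset$ and hence the equality.
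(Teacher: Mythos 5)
Your proposal is correct and follows essentially the same route as the paper: both reduce via Proposition \ref{P.Fgen} to showing $\langle F(\Cat),\idpart[b/w]\rangle=\Psi^{-1}(\Cat)$, and then observe that $\idpart[b/w]$ (together with its involution $\idpart[w/b]$) produces all colorizations of partitions in $\Cat$ from the alternating colorizations in $F(\Cat)$ --- a recoloring step the paper simply cites from \cite[Proposition 1.4]{TW18} and you carry out by hand. Your only superfluous detour is the worry about odd-length generators: as the paper notes, $\singleton\notin\Cat$ is \emph{equivalent} to $\Cat$ containing only even-length partitions (by the contraction lemma of Subsection \ref{secc.odd}), so no ``pairing up'' of odd-length partitions is ever needed, and likewise the appeal to color-inversion invariance is redundant since $\idpart[w/b]=(\idpart[b/w])^*$ directly.
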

\begin{proof}
The left-hand side equals to $\langle F(\Cat),\idpart[b/w]\rangle^{\tcol}$ by Proposition \ref{P.Fgen}. The image $F(\Cat)$ contains some colorization of partitions in $\Cat$. Thanks to the partition $\idpart[b/w]$, the category $\langle F(\Cat),\idpart[b/w]\rangle^{\tcol}$ actually contains all the colorizations of all partitions in $\Cat$ and therefore equals to $\Psi^{-1}(\Cat)$ (see \cite[Proposition 1.4]{TW18}). Note that $F(\Cat)$ is defined only if $\Cat$ contains only partitions of even length, which is equivalent to the assumption $\singleton\not\in\Cat$.
\end{proof}

\begin{defn}
We say that a two-colored partition $p\in\Part\twocol$ has an \emph{alternating coloring} if the color pattern of both upper and lower points alternates (between white and black), the color of the first points of both rows coincide, and the color of the last point of both rows coincide (consequently, $p$ is of even length). For a two-colored category $\Cat\subset\Part\twocol$, we denote by $\Alt\Cat$ the category generated by elements of $\Cat$ that have an alternating coloring. For an ordinary category $\Cat\subset\Part$ we denote $\Alt\Cat:=\Alt\Psi^{-1}(\Cat)$ the category generated by alternating colored partitions in $\Cat$.
\end{defn}

\begin{lem}
\label{L.FAlt}
Let $\Cat\subset\Part$ be a category such that $\singleton\not\in\Cat$. Then
$$F(\langle\Cat\rangle^{\tcol})=\Alt\Cat.$$
\end{lem}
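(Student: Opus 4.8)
The plan is to combine Proposition~\ref{P.Fgen}, which lets me replace the image of a generated category by the category generated by the image, with the color-inversion invariance supplied by Theorem~\ref{T.F}. First I would record that the hypothesis $\singleton\not\in\Cat$ forces every partition in $\Cat$ to have even length: by the odd-length lemma, a category containing an odd-length partition must contain $\singext$ or $\singleton$, and since $\Cat$ is one-colored it cannot contain $\singext$. Hence $\langle\Cat\rangle^{\tcol}\subset\Part^{\tcol}\even$ and $F$ is genuinely defined on it. Proposition~\ref{P.Fgen} then gives $F(\langle\Cat\rangle^{\tcol})=\langle F(\Cat)\rangle$, reducing the claim to showing $\langle F(\Cat)\rangle=\Alt\Cat$.

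Next I would unpack the right-hand side. By definition $\Alt\Cat=\Alt\Psi^{-1}(\Cat)$ is the category generated by those elements of $\Psi^{-1}(\Cat)$ that carry an alternating coloring. Since $\Psi^{-1}(\Cat)$ consists of all colorizations of partitions in $\Cat$, I would check that a fixed $p\in\Cat$ of even length admits exactly two alternating colorings: the one whose first upper point is white and the one whose first upper point is black. The alternating rule determines every color once the first is chosen, and the parity condition $k\equiv l\pmod 2$ (which holds because $|p|$ is even) guarantees that the requirements ``first points of both rows coincide'' and ``last points of both rows coincide'' are automatically met. The white-starting coloring is precisely $F(p)$, there being no extra singletons to delete, and the black-starting one is its color inversion $\overline{F(p)}$. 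Thus the alternating-colored elements of $\Psi^{-1}(\Cat)$ are exactly the partitions in $\{F(p):p\in\Cat\}\cup\{\overline{F(p)}:p\in\Cat\}$, whence $\Alt\Cat=\langle F(\Cat)\cup\overline{F(\Cat)}\rangle$.

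Finally I would close the gap between $\langle F(\Cat)\rangle$ and $\langle F(\Cat)\cup\overline{F(\Cat)}\rangle$ using invariance under color inversion. The inclusion $\langle F(\Cat)\rangle\subset\langle F(\Cat)\cup\overline{F(\Cat)}\rangle=\Alt\Cat$ is immediate. For the reverse, Theorem~\ref{T.F}(1) tells me that $F(\langle\Cat\rangle^{\tcol})=\langle F(\Cat)\rangle$ is a two-colored category invariant with respect to color inversions; since it contains $F(\Cat)$ it therefore also contains $\overline{F(\Cat)}$, so it is a category containing all the generators of $\Alt\Cat$ and hence contains $\Alt\Cat$. Combining the two inclusions yields $\langle F(\Cat)\rangle=\Alt\Cat$, which is the assertion. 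I expect the only genuinely fiddly point to be the combinatorial bookkeeping of the second paragraph, namely verifying that the alternating colorings of $p$ are exactly $F(p)$ and $\overline{F(p)}$ and that no further element of $\Psi^{-1}(\Cat)$ is alternating; everything else is formal once color-inversion invariance is invoked.
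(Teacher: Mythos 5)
Your proof is correct and takes essentially the same approach as the paper, which disposes of this lemma with the single line ``Follows directly from the definition of $F$ and $\Alt$'': your three steps (evenness of all partitions in $\Cat$, the reduction $F(\langle\Cat\rangle^{\tcol})=\langle F(\Cat)\rangle$ via Proposition~\ref{P.Fgen}, and the identification of the alternating colorings of an even-length $p$ as exactly $F(p)$ and $\overline{F(p)}$, with color-inversion invariance from Theorem~\ref{T.F} closing the loop) are precisely the routine verification the authors leave implicit. The one point you state loosely is the inference ``hence $\langle\Cat\rangle^{\tcol}\subset\Part^{\tcol}\even$'': this needs the (easy, but unstated) observation that the base partitions $\idpart$, $\idext$, $\pairpart$, $\singext\otimes\singext$ have even length and that tensor product, composition, and involution all preserve evenness of length, so a category generated by even-length partitions stays even.
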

\begin{proof}
Follows directly from the definition of $F$ and $\Alt$.
\end{proof}

\begin{rem}
The operation $\Alt$ for two-colored categories in general corresponds to the operation $\Cat\mapsto\langle\Cat^{\lincol}\rangle^{\tcol}$ for categories with extra singletons. More precisely, we have $\Alt\Cat=F\langle(F^{-1}(\Cat))^{\lincol}\rangle^{\tcol}$.
\end{rem}

\begin{prop}
\label{P.tilstar}
Let $\Cat\subset\Part$ be a category of partitions with $\singleton\not\in\Cat$ and denote by $G\subset O_N^+$ the corresponding quantum group. Then $\Alt\Cat$ corresponds to $G\tilstar\hat\Z_2$.
\end{prop}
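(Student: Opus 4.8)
The plan is to assemble this statement from results already in place, threading the functor $F$ through the glued-version construction rather than computing anything by hand. The key observation is that $\Alt\Cat$ is, by Lemma \ref{L.FAlt}, precisely the image $F(\langle\Cat\rangle^{\tcol})$, while $\langle\Cat\rangle^{\tcol}$ corresponds to the free product $G*\hat\Z_2$ by Proposition \ref{P.prods}. Proposition \ref{P.FG} then says that applying $F$ to a category and passing to the associated quantum group amounts to taking the glued version; hence the quantum group of $\Alt\Cat$ should be the glued version of $G*\hat\Z_2$, which is exactly $G\tilstar\hat\Z_2$ by the remark following the definition of the glued version. So the whole argument is a short chain of citations.

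First I would record that the hypothesis $\singleton\notin\Cat$ guarantees $\langle\Cat\rangle^{\tcol}\subset\Part^{\tcol}\even$, which is the standing assumption needed for both Lemma \ref{L.FAlt} and Proposition \ref{P.FG}. Indeed, a one-colored category containing a partition of odd length can, as in the odd-length lemma, contract neighboring points of equal color until a single point remains, and would therefore contain $\singleton$; thus $\singleton\notin\Cat$ forces every partition of $\Cat$ to have even length. Since the generators $\idpart$, $\idext$, $\pairpart$, $\singext\otimes\singext$ of any category with extra singletons all have even length and the category operations preserve the parity of the length, the entire category $\langle\Cat\rangle^{\tcol}$ consists of even-length partitions, so it lies in $\Part^{\tcol}\even$.

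With this in hand the proof is immediate. By Proposition \ref{P.prods}(1) the category $\langle\Cat\rangle^{\tcol}$ corresponds to $G*\hat\Z_2$. By Lemma \ref{L.FAlt} we have $F(\langle\Cat\rangle^{\tcol})=\Alt\Cat$. Applying Proposition \ref{P.FG} to the category $\langle\Cat\rangle^{\tcol}\subset\Part^{\tcol}\even$, the quantum group associated to $\Alt\Cat=F(\langle\Cat\rangle^{\tcol})$ is the glued version of the quantum group associated to $\langle\Cat\rangle^{\tcol}$, that is, the glued version of $G*\hat\Z_2$. The remark identifying $G\tilstar\hat\Z_2$ as the glued version of $G*\hat\Z_2$ then finishes the argument.

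Since every substantial step is a reference to an earlier result, there is no genuine computational obstacle here. The only point requiring care is the bookkeeping of the second paragraph: one must check that $\langle\Cat\rangle^{\tcol}$ really lands in $\Part^{\tcol}\even$ so that Lemma \ref{L.FAlt} and Proposition \ref{P.FG} are both applicable, and that the quantum group attached to $\Cat$ inside $O_N^+$ is the same $G$ appearing as the free-product factor, so that Proposition \ref{P.prods} genuinely yields $G*\hat\Z_2$.
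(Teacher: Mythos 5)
Your proof is correct and follows exactly the same chain of citations as the paper's own proof: Proposition \ref{P.prods} gives $G*\hat\Z_2$ for $\langle\Cat\rangle^{\tcol}$, Lemma \ref{L.FAlt} identifies $\Alt\Cat$ as its image under $F$, and Proposition \ref{P.FG} yields the glued version. Your extra bookkeeping that $\singleton\not\in\Cat$ forces $\langle\Cat\rangle^{\tcol}\subset\Part^{\tcol}\even$ is a sound verification the paper leaves implicit, but it does not change the route.
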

\begin{proof}
From Proposition \ref{P.prods}, it follows that $\langle\Cat\rangle^{\tcol}$ corresponds to the quantum group $G*\hat\Z_2$. From Lemma \ref{L.FAlt} it follows that $\Alt\Cat$ is its image under $F$. By Proposition \ref{P.FG} this implies that it corresponds to the glued version of $G*\hat\Z_2$, which is $G\tilstar\hat\Z_2$.
\end{proof}

\begin{rem}
We will study glued and tensor complexifications in a separate article in more detail. It is possible to show that, exchanging $\Z_2$ for $\Z$, the proposition still holds true and one can then actually drop the assumption $\singleton\not\in\Cat$.
\end{rem}

\section{New interpolating products}
\label{sec.prods}

\subsection{Quantum group degree of reflection}

Recall that given a quantum group $G=(C(G),u)$, we can construct a quantum subgroup of $G$ -- so-called \emph{diagonal subgroup} -- imposing the relation $u_{ij}=0$ for all $i\neq j$. If we, in addition, impose the relation $u_{ii}=u_{jj}$ for all $i$ and $j$, we get a quantum group corresponding to a C*-algebra generated by a single unitary. Therefore, it must be a dual of some cyclic group.

\begin{defn}
Let $G$ be a quantum group and denote by $\hat\Gamma$ the quantum subgroup of $G$ given by $u_{ij}=0$, $u_{ii}=u_{jj}$ for all $i\neq j$. The order of the cyclic group $\Gamma$ is called the \emph{degree of reflection} of $G$. If the order is infinite, we set the degree of reflection to zero.
\end{defn}

Denoting by $k$ the degree of reflection of a quantum group $G$, the definition says that there is a $*$-homomorphism $\phi\colon C(G)\to\C^*(\Z_k)$ mapping $u_{ij}\mapsto \delta_{ij}z$, where $z$ is the generator of $C^*(\Z_k)$. (We put $\Z_k=\Z$ for $k=0$.) Such a homomorphism also exists for $k$ any divisor of the degree of reflection.

In \cite{TW18}, a definition of the degree of reflection was formulated in terms of the associated representation category. We recall this definition here and prove that it is actually equivalent to our definition.

\begin{defn}
Let $w$ be a word over the alphabet $\{\wcol,\bcol\}$. We define $c(w)$ to be the difference between the number of $\wcol$ and the number of $\bcol$ in $w$. For $G=(C(G),u)$ a quantum group, and $T\in\Mor(u^{\otimes w_1},u^{\otimes w_2})$ an intertwiner, we denote $c(T):=c(w_2)-c(w_1)$.
\end{defn}

\begin{prop}
\label{P.k}
Let $G=(C(G),u)$ be a quantum group and denote by $k$ its degree of reflection. Then
$$\{c(T)\mid T\neq 0,T\in\Rep G\}=k\Z.$$
\end{prop}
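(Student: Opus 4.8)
The plan is to identify the set $S:=\{c(T)\mid T\neq 0,\ T\in\Rep G\}$ as a subgroup of $(\Z,+)$, hence of the form $m\Z$ for a unique $m\in\N_0$, and then to prove the two divisibilities $k\mid m$ and $m\mid k$; together these force $m=k$ and give $S=k\Z$. That $S$ is a subgroup is immediate from the categorical structure recalled in Subsection~\ref{secc.cat}: if $T,T'\neq 0$ then $T\otimes T'\neq 0$ and $c(T\otimes T')=c(T)+c(T')$, the adjoint satisfies $c(T^*)=-c(T)$ with $T^*\neq 0$, and the identity of the empty word gives $0\in S$. So $S$ is closed under addition and negation and contains $0$.

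For the inclusion $S\subset k\Z$ (that is, $k\mid m$) I would exploit that $\hat\Gamma\subset G$. Let $\phi\colon C(G)\to C(\hat\Gamma)=C^*(\Z_k)$ be the surjection with $\phi(u_{ij})=\delta_{ij}z$; since $\phi(\bar u_{ij})=\delta_{ij}z^{-1}$, an entrywise computation gives $\phi(u^{\otimes w})=z^{c(w)}\,\mathrm{id}$ for every word $w$. Any nonzero $T\in\Mor(u^{\otimes w_1},u^{\otimes w_2})$ stays an intertwiner after applying $\phi$, so $T\,z^{c(w_1)}=z^{c(w_2)}\,T$; comparing a nonzero scalar entry of $T$ yields $z^{c(T)}=1$ in $C^*(\Z_k)$, i.e. $k\mid c(T)$.

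The reverse divisibility $m\mid k$ is the heart of the matter and I expect it to be the \emph{main obstacle}. The idea is to realize $\hat\Z_m$ as a quantum subgroup of $G$ with fundamental representation $y\,\mathrm{id}_N$, where $y$ generates $C^*(\Z_m)$ and has order $m$. Concretely I would construct a $*$-homomorphism $\rho\colon C(G)\to C^*(\Z_m)$ with $\rho(u_{ij})=\delta_{ij}y$. Its existence I justify through Woronowicz's Tannaka--Krein duality: $C(G)$ is the universal C*-algebra generated by the entries of $u$ subject to the relations $T\,u^{\otimes w_1}=u^{\otimes w_2}\,T$ for all intertwiners $T\in\Mor(u^{\otimes w_1},u^{\otimes w_2})$. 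Substituting $u_{ij}\mapsto\delta_{ij}y$ turns each such relation into $T_{\alpha\beta}(y^{c(w_1)}-y^{c(w_2)})=0$, which holds because for nonzero $T$ we have $c(T)\in S=m\Z$ and hence $y^{c(T)}=1$. Thus $\rho$ is well defined; as it annihilates the off-diagonal generators and equalizes the diagonal ones, it factors through $C(\hat\Gamma)$ as $\bar\rho$ with $\bar\rho(z)=y$. Since $z^k=1$ forces $y^k=\bar\rho(1)=1$ and $y$ has order $m$, we conclude $m\mid k$ (in the degenerate cases one reads off directly that $m=0\iff k=0$, using $S\subset k\Z$ for one direction and the infinite order of $y$ for the other).

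Combining $k\mid m$ and $m\mid k$ gives $m=k$, hence $S=k\Z$. The only delicate point is the appeal to the universal presentation of $C(G)$: I must be sure that verifying the intertwiner relations on the candidate images $\delta_{ij}y$ genuinely suffices to define $\rho$, and that these relations (including those encoding unitarity of $u$ and the duality between $u$ and $\bar u$, which carry $c=0\in S$ and are therefore respected) really exhaust all of $C(G)$. Once this is granted, the crucial verification $c(T)\in m\Z$ is nothing but the definition of $m$, and the remainder is routine bookkeeping with the function $c$.
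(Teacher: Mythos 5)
Your proof is correct and takes essentially the same route as the paper's: both identify the set as a subgroup $m\Z\subset\Z$, prove $k\mid c(T)$ by pushing the intertwiner relation through the surjection $u_{ij}\mapsto\delta_{ij}z$ onto $C^*(\Z_k)$ and using $T\neq 0$, and prove $m\mid k$ by invoking Tannaka--Krein universality of the relations $Tu^{\otimes w_1}=u^{\otimes w_2}T$ to build the $*$-homomorphism $u_{ij}\mapsto\delta_{ij}y$ into $C^*(\Z_m)$. Your additional bookkeeping (factoring $\rho$ through $C(\hat\Gamma)$ to conclude $m\mid k$, and the degenerate case $k=0$) only makes explicit what the paper leaves implicit.
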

\begin{proof}
It is easy to see that the considered set is a subgroup of $\Z$ (this provides the categorical definition of the degree of reflection, see \cite[Lemma 2.6 and Proposition 2.7]{TW18}), so let us denote it by $\tilde k\Z$. We need to prove that $\tilde k=k$.

First, we prove that $\tilde k$ is a multiple of $k$. Take an intertwiner $T\in\Mor(u^{\otimes w_1},u^{\otimes w_2})$ with $l:=c(T)=c(w_2)-c(w_1)$, so we have $Tu^{\otimes w_1}=u^{\otimes w_2}T$. Applying the $*$-homomorphism $C(G)\to C^*(\Z_k)$, $u_{ij}\mapsto\delta_{ij}z$, we get $Tz^{c(w_1)}=z^{c(w_2)}T$, so $z^lT=T$. Consequently, if $T\neq 0$, we must have $z^l=1$, that is, $l$ is a multiple of $k$.

Now, we prove that $k$ is a multiple of $\tilde k$. To do this, it is enough to show that there is a $*$-homomorphism $C(G)\to C^*(\Z_{\tilde k})$ mapping $u_{ij}\mapsto \delta_{ij}z$. By Tannaka--Krein duality, all relations in $C(G)$ can be deduced from the relations of the form $Tu^{\otimes w_1}=u^{\otimes w_2}T$, where $T\in\Mor(u^{\otimes w_1},u^{\otimes w_2})$. Thus the desired homomorphism exists since those relations hold in $C^*(\Z_{\tilde k})$ after applying $u_{ij}\mapsto \delta_{ij}z$.
\end{proof}

\subsection{The general case}

In this subsection, we answer the question, whether there are some quantum groups interpolating the free product $G*H$ and the tensor product $G\times H$, for any given quantum groups $G$ and $H$. In the following definition, we give a very general definition of such interpolating products. Then we discuss other possibilities. In the subsequent subsections, we will then focus on the case when $G$ is orthogonal easy quantum group and $H=\hat\Z_2$.

\begin{defn}
\label{D.prods}
Let $G$ and $H$ be compact matrix quantum groups and denote by $u$ and $v$ their respective fundamental representations. We define the following quantum subgroups of $G*H$. The product $G\ttimes H$ is defined by taking the quotient of $C(G*H)$ by the relations
\begin{equation}
\label{eq.ttimes}
ab^*x=xab^*,\qquad a^*bx=xa^*b
\end{equation}
the product $G\timess H$ is defined by the relations
\begin{equation}
\label{eq.timess}
ax^*y=x^*ya,\qquad axy^*=xy^*a
\end{equation}
the product $G\times_0 H$ by the combination of the both pairs of relations and, finally, given $k\in\N$, the product $G\times_{2k}H$ is defined by the relations
\begin{equation}
\label{eq.times}
a_1x_1\cdots a_kx_k=x_1a_1\cdots x_ka_k,
\end{equation}
where $a,b,a_1,\dots,a_k\in\{u_{ij}\}$ and $x,y,x_1,\dots,x_k\in\{v_{ij}\}$. (Equivalently, we can assume $a,b,a_1,\dots,a_k\in\spanlin\{u_{ij}\}$ and $x,y,x_1,\dots,x_k\in\spanlin\{v_{ij}\}$.)
\end{defn}

\begin{thm}
\label{T.prods}
Consider quantum groups $G,H$. Then the products from Definition \ref{D.prods} are indeed well-defined quantum groups. We have the following inclusions
$$G*H\kern1ex\begin{matrix}\lower.3ex\hbox{\rotatebox{15}{$\supset$}} & G\ttimes H & \rotatebox{345}{$\supset$}\\\raise.3ex\hbox{\rotatebox{345}{$\supset$}}  & G\timess H&\rotatebox{15}{$\supset$}\end{matrix}\kern1ex G\times_0 H\supset G\times_{2k}G\supset G\times_{2l}H\supset G\times_2H= G\times H,$$
where we assume $k,l\in\N$ such that $l$ divides $k$. The last three inclusions are strict if and only if the degree of reflection of both $G$ and $H$ is different from one.% The first inclusion (upper or lower) is strict if there are indices $i,j,k,l$ such that $u_{ij}u_{kl}^*\not\in\C$ or $v_{ij}v_{kl}^*\not\in\C$ respectively.
\end{thm}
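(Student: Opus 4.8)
The plan is to treat the three assertions in turn: that each product is a compact matrix quantum group, the chain of inclusions, and finally the strictness criterion.

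For well-definedness the only nontrivial point is that the comultiplication $\Delta_*$ of $G*H$ descends to each quotient, i.e.\ condition (3) in the definition of a compact matrix quantum group; generation and invertibility of the fundamental matrix are inherited through the quotient $*$-homomorphism. All the defining relations are ``balanced commutations'' between a $G$-monomial and an $H$-monomial, and $\Delta$ sends each $u_{ij}$ to a sum of products of $u$'s and each $v_{ij}$ to a sum of products of $v$'s, keeping the two colors apart in each leg. Thus, applying $\Delta_*$ to a relation and then re-applying the same relation inside each tensor leg reproduces the comultiplied relation; for instance $\Delta_*(a_1x_1\cdots a_kx_k)=\sum (a_1'x_1'\cdots a_k'x_k')\otimes(a_1''x_1''\cdots a_k''x_k'')$ and a leg-wise rewrite using \eqref{eq.times} turns it into $\Delta_*(x_1a_1\cdots x_ka_k)$.

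For the inclusions, the outer ones are immediate from comparing relation sets: $G\times_0H$ is defined by the union of the $\ttimes$- and $\timess$-relations, so $G\times_0H\subseteq G\ttimes H,\ G\timess H\subseteq G*H$. Writing $R_{2m}$ for the defining relation \eqref{eq.times} of $G\times_{2m}H$, I would prove $G\times_{2k}H\supseteq G\times_{2l}H$ for $l\mid k$ by splitting the length-$2k$ word $a_1x_1\cdots a_kx_k$ into $k/l$ consecutive blocks of length $2l$ and applying $R_{2l}$ to each block in turn; this shows $R_{2l}$ implies $R_{2k}$, so the smaller group carries more relations. The delicate inclusion is $G\times_0H\supseteq G\times_{2k}H$, where the four length-three relations of $\times_0$ must be derived from the single length-$2k$ relation $R_{2k}$, contracting pairs by means of the unitarity identities $\sum_j u_{ij}u_{kj}^*=\delta_{ik}$ and their analogues for $v$. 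The main obstacle here is that a naive contraction fails because of the interleaving of colors (a $v$ sits between the two $u$'s one wants to pair off and does not commute past them), so the argument must feed suitable linear combinations into $R_{2k}$ and combine several contracted instances; I expect this to be the most computation-heavy step.

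The strictness criterion is the conceptual heart, and I would approach it through the reflection homomorphisms. By the definition of the degree of reflection together with Proposition \ref{P.k}, there are quantum-group morphisms $C(G)\to C^*(\Z_{d_G})$ and $C(H)\to C^*(\Z_{d_H})$ sending $u_{ij},v_{ij}\mapsto\delta_{ij}z$; these assemble into $C(G*H)\to C^*(\Z_{d_G}*\Z_{d_H})$ and, being compatible with every defining relation, descend to surjections $C(G\times_\bullet H)\to C(\hat\Z_{d_G}\times_\bullet\hat\Z_{d_H})$. Hence it suffices to separate the cyclic-dual products: for single unitaries $z,w$ the $\times_0$-relations are vacuous, so $\hat\Z_{d_G}\times_0\hat\Z_{d_H}$ is the free product, while $\times_{2k}$ imposes exactly that $(zw)^k$ be central, since $(zw)^k=(wz)^k$ is equivalent to $[z,(zw)^k]=1$. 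When $d_G,d_H\neq1$ one shows $(zw)^l\neq(wz)^l$ for $0<l<k$ in these quotients by passing to a von Dyck quotient in which $zw$ has order $k$ and $z$ fails to centralize its proper powers, yielding strictness of all three inclusions. Conversely, if $d_G=1$ (say), I would show the products collapse onto $G\times H$: Proposition \ref{P.k} provides a nonzero intertwiner of degree one, which combined with the $\times_0$-relations should force each $u_{ij}$, and not merely the degree-zero combinations $u_{ij}u_{kl}^*$, to commute with every $v_{mn}$. I expect the collapse direction and the group-theoretic separation of the various $\times_{2k}$ to require the most care, the former once again because one must push a single generator past the color interleaving using the degree-one intertwiner.
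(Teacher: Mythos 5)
Your proposal is correct in outline and follows the paper's proof essentially step for step: a direct check that the comultiplication descends; the blockwise concatenation argument showing the relation for $\times_{2l}$ implies the one for $\times_{2k}$ when $l\mid k$; deriving the $\ttimes$- and $\timess$-relations from the length-$2k$ relation by unitarity contractions (the paper performs exactly this in partition-calculus shorthand, rotating and contracting $(\crosspart[qw/wq])^{\otimes k}$ against its conjugate, and explicitly remarks that the diagrams are ``nothing more but a shorthand'' for the direct manipulations with the relations and unitarity that you envision); reducing strictness to the diagonal dual subgroups $\hat\Z_m\subset G$, $\hat\Z_n\subset H$; the identification $\hat\Z_m\times_0\hat\Z_n=\hat\Z_m*\hat\Z_n$; the passage to von Dyck groups; and the collapse when a degree of reflection equals one via Proposition \ref{P.k}. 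For the last point, the paper's execution of what you defer is a one-line grading trick you essentially describe: Proposition \ref{P.k} yields a homogeneous polynomial $g$ of degree $-1$ with $g(u_{ij},u_{ij}^*)=1$, the $\ttimes$-relations extend to all homogeneous degree-zero polynomials in the $u_{ij}$, and then $ax=a\,g\,x=x\,a\,g=xa$.

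There is, however, one genuine gap, in your von Dyck step. The blanket claim that in $D(m,n,k)$ the generator $z$ ``fails to centralize the proper powers of $zw$'' is false precisely when $d_G=d_H=2$: the group $D(2,2,k)$ is dihedral of order $2k$, and for $k$ even the element $(zw)^{k/2}$ lies in its center, so this quotient cannot distinguish $(zw)^{k/2}$ from $(wz)^{k/2}=z^{-1}(zw)^{k/2}z$. Since $l=k/2$ is a divisor of $k$, your argument leaves the strictness of $\hat\Z_2\times_{2k}\hat\Z_2\supsetneq\hat\Z_2\times_{2l}\hat\Z_2$ unproved exactly in the case the paper singles out. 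The paper's repair is to treat $m=n=2$ separately without any von Dyck quotient: in $\langle a,b\mid a^2=b^2=1,\,(ab)^k=(ba)^k\rangle$ one has $ba=(ab)^{-1}$, so the relation becomes $(ab)^{2k}=1$ and the group is dihedral of order $4k$, where $(ab)^l=(ba)^l=(ab)^{-l}$ forces $k\mid l$; hence all the required inequalities hold. With this case added (and noting, as the paper does, that for $m$ or $n$ infinite the von Dyck action on a non-Euclidean plane still applies), your argument goes through.
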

\begin{proof}
It is a direct verification that in all cases the comultiplaction passes to the quotient, so the relations provide a good definition of new quantum groups.

Denote by $u$ the fundamental representation of $G$ and by $v$ the fundamental representation of $H$. Without loss of generality, we can assume that both $u$ and $v$ are unitary representations since any representation of a quantum group is similar to a unitary one. Let us use the white circle $\wcol$ as a symbol for the representation $u$, black circle $\bcol$ for $\bar u$, white square {\tiny$\square$} for $v$ and black square {\tiny$\blacksquare$} for $\bar v$. Then the relations are actually partition relations corresponding respectively to the partitions $\Pabcacb[qwb/wbq]$, $\Pabcacb[qbw/bwq]$, $\Pabcbac[Qqw/wQq]$, $\Pabcbac[qQw/wqQ]$, and $(\crosspart[qw/wq])^{\otimes k}$. 

We can use the partition calculus to show that Relations \eqref{eq.times} imply both \eqref{eq.ttimes} and \eqref{eq.timess} for any $k\in\N$. Indeed, rotating $(\crosspart[qw/wq])^{\otimes k}$, we get $(\crosspart[bQ/Qb])^{\otimes k}$. Then, using compositions with the pair partitions, one can contract the tensor product $(\crosspart[qw/wq])^{\otimes k}\otimes (\crosspart[bQ/Qb])^{\otimes k}$ to the partition $\Partition{\Pblock 0to0.5:1,4 \Pline (2,1) (2,0) \Pline (3,1) (3,0) }[qwbQ/0wb]$, which can then be rotated to $\Pabcacb[qwb/wbq]$. The other partitions can be obtained similarly. All the remaining inclusions are clear. Note that all the arguments here can be translated into direct manipulations with the relations themselves (using the unitarity relations as well). The partition calculus provides here nothing more but a shorthand for those manipulations and makes them, hopefully, more clear.

It remains to prove the statement about strictness. Denote by $m$ the degree of reflection of $G$ and by $n$ the degree of reflection of $H$. First, suppose that $m$ and $n$ are both different from one. Then it is sufficient to prove the strictness for the corresponding subgroups $\hat\Z_m$ and $\hat\Z_n$. So, we need to prove the strictness of the following inclusions
$$\hat\Z_m\times_0 \hat\Z_n\supset \hat\Z_m\times_{2k}\hat\Z_n\supset \hat\Z_m\times_{2l}\hat\Z_n,$$

Directly from the definition, we have $\hat\Z_m\times_0\hat\Z_n=\hat\Z_m*\hat\Z_n$. Indeed, the matrices $u$ and $v$ in this case have only one entry, say $a$ and $x$. The Relations \eqref{eq.ttimes} then become trivial:
$$aa^*x=x=xaa^*,\qquad a^*ax=x=xa^*a$$
and likewise the relations \eqref{eq.timess}.

For $m=n=2$, we have that $\hat\Z_2\times_{2k}\hat\Z_2$ is the dual of the dihedral group of order $4k$, so we indeed have the strictness here. For general $m$ and $n$, let us just briefly sketch the proof. From the definition, we have that $\hat\Z_m\times_{2k}\hat\Z_n$ is the dual of the finitely presented group $\langle a,b\mid a^n=1=b^m,\;(ab)^k=(ba)^k\rangle$. We need to prove that $(ab)^l\neq (ba)^l$. To do so, let us further divide the relation $(ab)^k=1$. We obtain the so-called von Dyck group $D(m,n,k)$, which has an action on a (possibly non-Euclidean) plane. From this action, we can see that $(ab)^l$ and $(ba)^l$ are indeed different for $l<k$ (unless $m=n=2$).

Now, assuming $m=1$, we are going to show that $G\ttimes H=G\times H$. Consider a $\Z$-grading on the polynomials $\C\langle x_{ij},x_{ij}^*\rangle_{i,j}$ assigning the degree one to the variables $x_{ij}$ and degree minus one to the variables $x_{ij}^*$. Then Relations \eqref{eq.ttimes} are equivalent to $f(u_{ij},u_{ij}^*)x=xf(u_{ij},u_{ij}^*)$ for any $x\in\{v_{ij}\}$ and $f$ a homogeneous polynomial of degree zero. From Proposition \ref{P.k}, we have that there exists a non-zero intertwiner $T\in\Mor(u^{\otimes w},1)$ with $c(w)=1$. This means that there is a polynomial $g$ of degree minus one such that $g(u_{ij},u_{ij}^*)=1$. Taking any $a\in\{u_{ij}\}$ and $x\in\{v_{ij}\}$, we have that $ag(u_{ij},u_{ij}^*)$ is a polynomial in $u_{ij}$'s of degree zero. Hence, we have
\[ax=a\,g(u_{ij},u_{ij}^*)\,x=xa\,g(u_{ij},u_{ij}^*)=xa.\qedhere\]
\end{proof}

\begin{rem}
For an easy quantum group $G$ corresponding to a category $\Cat\subset\Part\twocol$, we have that its degree of reflection is equal to one if and only if $\Cat$ contains the singleton $\singleton[w]$.
\end{rem}

We could continue inventing other relations coupling somehow the factors $C(G)$ and $C(H)$ using partitions. We believe however, that the above mentioned definition is the most natural. Nevertheless, as an example of a different possibility, let us define the following.

\newcommand{\bistar}[3]{\mathbin{{}^{#1}\mathord{*}^{#2}_{#3}}}
\newcommand{\bitimes}[3]{\mathbin{{}^{#1}\mathord{\times}^{#2}_{#3}}}

\begin{defn}
Let $G$ and $H$ be quantum groups. Suppose $G$ has a one-dimensional representation $s$ and $H$ has a one-dimensional representation $r$. Then we define $G\bistar{s}{r}{k}H$ to be a quantum subgroup of $G*H$ given by the relation $(sr)^k=1$.
\end{defn}

It is easy to check that this relation indeed defines a quantum subgroup. One way to see that this subgroup should not coincide with the tensor product (at least if $G$ and $H$ are ``non-trivial enough'') is to notice that the relation is non-crossing in the following sense. Consider $T_1\in\Mor(u^{\otimes k_1},s)$ and $T_2\in\Mor(u^{\otimes k_2},r)$. Then imposing the relation means adding the intertwiner $(T_1\otimes T_2)^{\otimes k}$ to $\Mor((u\oplus v)^{\otimes 2k},1)$, which is a tensor product of intertwiners acting non-trivially either just on $u$ or just on $v$ (compare with the definition of non-crossing partitions). In particular, if $G\subset B_{N_1}^{\#+}$, so we can consider $s:=\sum_k u_{ik}$, and $H\subset B_{N_2}^{\#+}$, so we can consider $r:=\sum_k v_{ik}$, then the relation $(sr)^k=1$ corresponds to $(\singleton[w]\otimes\singleton[q])^{\otimes k}$.

This particular construction and many other relations that couple some one-dimensional subrepresentations of the factors $G$ and $H$ were already described in \cite[Section 5]{Fre19}.

\subsection{The case of $G$ orthogonal and $H=\hat\Z_2$}
From now on, let us get back to the case, where the quantum group $G=(C(G),v)$ is an orthogonal quantum group and $H=\hat\Z_2$ with fundamental representation denoted by $r$.

The product $G\ttimes\hat\Z_2=G\times_0\hat\Z_2$ is the subgroup of $G*\hat\Z_2$ given by the relations
\begin{equation}
\label{eq.ttimes2}
v_{ij}v_{kl}r=rv_{ij}v_{kl},
\end{equation}
which are the relations corresponding to the partition $\globcolext$.

The quantum group $G\times_{2k}\hat\Z_2$ is the subgroup of $G*\hat\Z_2$ given by the relations
\begin{equation}
\label{eq.times2}
v_{i_1j_1}rv_{i_2j_2}r\cdots v_{i_kj_k}r=rv_{i_1j_1}rv_{i_2j_2}\cdots rv_{i_kj_k},
\end{equation}
which correspond to the partition $(\positionerext)^{\otimes k}$.

\begin{defn}
Assume $G$ has a one-dimensional representation $s$. Recall also the definition of the product $G\bistar{s}{}{k}\hat\Z_2:=G\bistar{s}{r}{k}\hat\Z_2$ given by the relation $(sr)^k=1$. We also define the product $G\bitimes{s}{}{k}\hat\Z_2$ combining the relation $(sr)^k=1$ with Relations~\eqref{eq.ttimes2}.
\end{defn}

\begin{rem}
Since both $s$ and $r$ are representations, we have that $(sr)^k$ is a representation and hence the relation $(sr)^k=1$ indeed defines a quantum subgroup. If $G$ is an easy quantum group corresponding to a category $\Cat\subset\Part$ containing the element $\singleton\otimes\singleton$, we can choose $s:=\sum_kv_{ik}=\sum_kv_{kj}$. Then the relation $(sr)^k=1$ corresponds to the partition $(\singleton\otimes\tcol)^{\otimes k}$.
\end{rem}

\begin{rem}
Again, one can compare this construction with \cite[Section 5]{Fre19}. The difference is that instead of studying quantum subgroups of $G*H$ determined by relations involving some one-dimensional subrepresentation $r$ of $H$, we set $H:=\hat\Z_2$ and work with its one-dimensional fundamental representation. As a particular example, note that the quantum group $BO_N^{+\#}\subset O_N^+*B_N^{+\#}$ from \cite[Definition 5.2]{Fre19} is essentially defined by Relations \eqref{eq.ttimes2} if we interpret $r$ as the one-dimensional representation of $B_N^{+\#}$ given by $r=\sum_kw_{ik}$ ($w$ being the fundamental representation of $B_N^{+\#}$). Hence, $O_N^+\ttimes\hat\Z_2$ is a quantum subgroup of $BO_N^{+\#}$ given by $w_{ij}=0$ unless $i=j$ and $w_{ii}=w_{jj}$. In fact, we have
$$O_N^+\ttimes\hat\Z_2\subset O_N^+\ttimes B_N^{+\#}\subset BO_N^{+\#}\subset O_N^+*B_N^{+\#}.$$
\end{rem}

\begin{prop}
\label{P.incl2}
Let $G\subset O_N^+$ be a compact matrix quantum group having a one-dimensional representation $s$. Then
$$
\begin{array}{cl}
G*\hat\Z_2\supset G\bistar{s}{}{k}\hat\Z_2\supset G\bitimes{s}{}{k}\hat\Z_2\supset G\times\hat\Z_2\qquad&\text{for any $k\in\N$,}\\
G\bistar{s}{}{k}\hat\Z_2\supset G\bistar{s}{}{l}\hat\Z_2\qquad&\text{if $l$ divides $k$,}\\
G\bitimes{s}{}{k}\hat\Z_2\supset G\bitimes{s}{}{l}\hat\Z_2\qquad&\text{if $l$ divides $k$.}
\end{array}
$$
\end{prop}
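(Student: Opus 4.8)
The plan is to exploit that every quantum group occurring in the statement is presented as a quotient of $C(G*\hat\Z_2)$ by a set of additional relations. Consequently, for any two of them an inclusion $A\supset B$ is equivalent to the assertion that all defining relations of $A$ already hold in $C(B)$: this is precisely the condition guaranteeing that the identity-on-generators assignment $v_{ij}\mapsto v_{ij}$, $r\mapsto r$ extends to a surjective $*$-homomorphism $C(A)\to C(B)$, which is what an inclusion of quantum subgroups of $G*\hat\Z_2$ means. The whole proposition thus reduces to checking containments of relations, and I would organize the verification inclusion by inclusion.

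First I would treat the chain. The inclusion $G*\hat\Z_2\supset G\bistar{s}{}{k}\hat\Z_2$ is immediate, since by definition $G\bistar{s}{}{k}\hat\Z_2$ arises from $G*\hat\Z_2$ by imposing $(sr)^k=1$; likewise $G\bistar{s}{}{k}\hat\Z_2\supset G\bitimes{s}{}{k}\hat\Z_2$ is immediate, as $G\bitimes{s}{}{k}\hat\Z_2$ imposes in addition Relations~\eqref{eq.ttimes2}. For the remaining inclusion $G\bitimes{s}{}{k}\hat\Z_2\supset G\times\hat\Z_2$ I would verify that both defining relations of $G\bitimes{s}{}{k}\hat\Z_2$ hold in $C(G\times\hat\Z_2)=C(G)\otimes_{\rm max}C(\hat\Z_2)$. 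Relations~\eqref{eq.ttimes2} hold there because in the tensor product every $v_{ij}$ commutes with $r$; and the relation $(sr)^k=1$ follows, using that $s$ and $r$ are commuting self-adjoint unitaries with $s^2=r^2=1$, from the computation $(sr)^k=s^kr^k=1$.

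For the two divisibility statements I would write $k=lm$ with $m\in\N$ and note that in $C(G\bistar{s}{}{l}\hat\Z_2)$ the relation $(sr)^l=1$ holds, hence $(sr)^k=\bigl((sr)^l\bigr)^m=1$; thus the sole defining relation of $G\bistar{s}{}{k}\hat\Z_2$ is satisfied, giving $G\bistar{s}{}{k}\hat\Z_2\supset G\bistar{s}{}{l}\hat\Z_2$, and the same argument together with the fact that Relations~\eqref{eq.ttimes2} are imposed on both sides yields $G\bitimes{s}{}{k}\hat\Z_2\supset G\bitimes{s}{}{l}\hat\Z_2$. I expect the only genuinely computational step to be the verification of $(sr)^k=1$ in the tensor product for the last inclusion of the chain: this is where the involutive (order-dividing-two) nature of $r$ and of the one-dimensional representation $s$ is used, and it is the point that must be handled with care, the remaining inclusions being formal consequences of relation containment.
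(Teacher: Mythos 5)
Your overall strategy --- reading every product as a quotient of $C(G*\hat\Z_2)$ and reducing each inclusion to a containment of defining relations --- is exactly what the paper intends by its one-line proof (``straightforward from the definition''), and your verifications of $G*\hat\Z_2\supset G\bistar{s}{}{k}\hat\Z_2\supset G\bitimes{s}{}{k}\hat\Z_2$ and of both divisibility statements are complete and correct. The gap is in the step you yourself flagged as the only computational one. First, $s^2=1$ is not among the hypotheses: $s$ is merely assumed to be a one-dimensional representation, and such a representation of an orthogonal quantum group need not be involutive (e.g.\ $SO(2)\subset O_2\subset O_2^+$ carries the character $s=z$ of infinite order). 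Second, and more seriously, even granting $s^2=1$ the identity $(sr)^k=s^kr^k=1$ holds only for \emph{even} $k$: for odd $k$ one gets $(sr)^k=s^k\otimes r$ in $C(G)\otimes_{\rm max}C^*(\Z_2)$, and this can never equal $1$ --- composing with the two characters $\chi_\pm\colon C^*(\Z_2)\to\C$, $r\mapsto\pm1$, would yield $s^k=1$ and $s^k=-1$ simultaneously. So the relation $(sr)^k=1$ holds in the tensor product if and only if $k$ is even and $s^k=1$, and only under that hypothesis does your argument (and the surjection $C(G\bitimes{s}{}{k}\hat\Z_2)\to C(G\times\hat\Z_2)$ it is meant to produce) exist.

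This is not a repairable slip in your write-up but a genuine restriction on the statement: for odd $k$ the inclusion $G\bitimes{s}{}{k}\hat\Z_2\supset G\times\hat\Z_2$ actually fails, since the required surjection would have to send $(sr)^k\mapsto s^k\otimes r^k\neq 1$. In the easy setting the same obstruction is visible on the category side: every partition in $\langle\Cat,\positionerext\rangle^{\tcol}$, the category of $G\times\hat\Z_2$, has an even number of extra singletons (all generators do, and the category operations preserve this parity), whereas the generator $(\singleton\otimes\singext)^{\otimes k}$ corresponding to $(sr)^k=1$ has $k$ of them. Consistently, the paper only ever uses even exponents in this context --- Proposition \ref{P.timeskseq} identifies $G\bitimes{s}{}{2k}\hat\Z_2$ with $G\times_{2k}\hat\Z_2$, which by Theorem \ref{T.prods} does contain $G\times\hat\Z_2$. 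Your proof should therefore either restrict the last inclusion of the first line to even $k$ together with $s^k=1$ (e.g.\ $s=s^*$, as for $s=\sum_k v_{ik}$ in the easy case), or explicitly flag that the proposition as printed is too strong; the paper's own proof glosses over precisely the point your computation was supposed to settle.
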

\begin{proof}
Straightforward from the definition of the products.
\end{proof}

\begin{prop}
\label{P.timeskseq}
Let $G\subset O_N^+$ and suppose that $G$ has a representation $s:=\sum_kv_{ik}=\sum_kv_{kj}$. Then $G\bitimes{s}{}{2k}\hat\Z_2=G\times_{2k}\hat\Z_2$.
\end{prop}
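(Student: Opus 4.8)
The plan is to prove the equality of these two quantum subgroups of $G*\hat\Z_2$ by showing that their defining relations generate the same ideal in $C(G*\hat\Z_2)$. Write $R_T$ for the relations \eqref{eq.times2} defining $G\times_{2k}\hat\Z_2$ and $R_B$ for the two families \eqref{eq.ttimes2} and $(sr)^{2k}=1$ defining $G\bitimes{s}{}{2k}\hat\Z_2$; it then suffices to establish $R_T\Leftrightarrow R_B$. I would carry out the manipulations in the glued variables $\tilde v_{ij}:=v_{ij}r$, for which $\tilde v_{ij}^*=rv_{ij}$ and $\tilde s:=sr=\sum_k\tilde v_{ik}=\sum_k\tilde v_{kj}$. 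Here $s$ is a self-adjoint unitary, since $s=s^*$ and $v^tv=1$ force $s^2=1$; hence $\tilde s$ is unitary with $\tilde s^*=\tilde s^{-1}$. In these variables $R_T$ becomes $\tilde v_{i_1j_1}\cdots\tilde v_{i_kj_k}=\tilde v_{i_1j_1}^*\cdots\tilde v_{i_kj_k}^*$ and \eqref{eq.ttimes2} becomes $\tilde v_{ij}\tilde v_{kl}^*=\tilde v_{ij}^*\tilde v_{kl}$.

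For the direction $R_T\Rightarrow R_B$, the relation \eqref{eq.ttimes2} is already available: by Theorem \ref{T.prods} we have $G\times_{2k}\hat\Z_2\subset G\times_0\hat\Z_2=G\ttimes\hat\Z_2$, and the right-hand side is by definition cut out by \eqref{eq.ttimes2}, so $R_T$ implies \eqref{eq.ttimes2}. The remaining relation $(sr)^{2k}=1$ I would obtain by summing $R_T$ over the indices $j_1,\dots,j_k$: the left-hand side collapses to $\tilde s^k$ and the right-hand side to $(\tilde s^*)^k=\tilde s^{-k}$, which gives $\tilde s^{2k}=1$.

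For the converse $R_B\Rightarrow R_T$, I would sum \eqref{eq.ttimes2}, in the form $\tilde v_{ij}\tilde v_{kl}^*=\tilde v_{ij}^*\tilde v_{kl}$, over $j$, obtaining $\tilde s\,\tilde v_{kl}^*=\tilde s^*\,\tilde v_{kl}$ and hence $\tilde v_{kl}^*=\tilde s^{-2}\tilde v_{kl}$; summing instead over $l$ yields $\tilde v_{ij}\tilde s^*=\tilde v_{ij}^*\tilde s$ and hence $\tilde v_{ij}^*=\tilde v_{ij}\tilde s^{-2}$. Comparing the two expressions for $\tilde v_{ij}^*$ shows that $\tilde s^{2}$ commutes with every $\tilde v_{ij}$ (and, by taking adjoints, with every $\tilde v_{ij}^*$). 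Consequently $\tilde v_{i_1j_1}^*\cdots\tilde v_{i_kj_k}^*=\tilde s^{-2k}\,\tilde v_{i_1j_1}\cdots\tilde v_{i_kj_k}$, and substituting $(sr)^{2k}=\tilde s^{2k}=1$ gives exactly $R_T$.

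The only genuinely non-formal step is $R_T\Rightarrow\eqref{eq.ttimes2}$, namely that the single interlacing relation of $G\times_{2k}\hat\Z_2$ already forces the commutation $v_{ij}v_{kl}r=rv_{ij}v_{kl}$. This is precisely the contraction phenomenon established in the proof of Theorem \ref{T.prods} (rotating $(\positionerext)^{\otimes k}$ and contracting it against pair partitions), so I would invoke that result rather than repeat the computation; everything else reduces to the two index summations above together with the unitarity of $\tilde s$.
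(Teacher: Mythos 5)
Your proof is correct, but it takes a genuinely different route from the paper. The paper's proof never touches the C*-algebra directly: it rephrases the claimed equivalence of relation sets as an equality of categories of partitions with extra singletons, transports that equality through the functor $F$ of Theorem \ref{T.F} into a statement about two-colored categories, and then cites \cite[Lemma 3.6]{Gro18} for the resulting identity of two-colored categories. You instead argue entirely at the level of generators and relations in $C(G*\hat\Z_2)$, passing to the glued variables $\tilde v_{ij}=v_{ij}r$, in which \eqref{eq.times2} becomes $\tilde v_{i_1j_1}\cdots\tilde v_{i_kj_k}=\tilde v_{i_1j_1}^*\cdots\tilde v_{i_kj_k}^*$ and \eqref{eq.ttimes2} becomes $\tilde v_{ij}\tilde v_{kl}^*=\tilde v_{ij}^*\tilde v_{kl}$; both directions then reduce to summations over matrix indices together with the unitarity of $\tilde s=sr$. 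I checked the computations: $s^2=1$ does follow from the hypothesis together with orthogonality of $v$, summing \eqref{eq.times2} over $j_1,\dots,j_k$ does yield $\tilde s^k=\tilde s^{-k}$, and summing \eqref{eq.ttimes2} over $j$ and over $l$ gives the two expressions $\tilde v^*_{ij}=\tilde s^{-2}\tilde v_{ij}=\tilde v_{ij}\tilde s^{-2}$, whence $\tilde s^2$ is central among the $\tilde v_{ij}$ and the converse direction follows from $\tilde s^{2k}=1$. Your one appeal to the paper, Theorem \ref{T.prods} for the implication \eqref{eq.times2} $\Rightarrow$ \eqref{eq.ttimes2}, is legitimate since that theorem precedes the proposition. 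What the two approaches buy: yours is self-contained (no dependence on \cite{Gro18}) and visibly valid for arbitrary, not necessarily easy, $G\subset O_N^+$ admitting such an $s$ --- which matches the stated generality, whereas the paper's partition-calculus proof needs the small extra observation that equality of generated categories yields equivalence of the imposed relations in any quotient. Conversely, the paper's proof is shorter given the machinery already built and situates the statement within the classification framework; it is worth noticing that your index summations are exactly the algebraic shadow of the partition manipulations there --- summing a relation over a free index corresponds to capping a leg with a singleton, which is how $\singleton[w]^{\otimes 2k}$ and $\idpart[b/w]^{\otimes k}$ generate each other in the two-colored picture.
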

\begin{proof}
We need to prove that the set of relations $\left(\sum_k v_{ik}\right)^2=1$, $(sr)^{2k}=1$, and Relations \eqref{eq.ttimes2} is equivalent to the relation $\left(\sum_k v_{ij}\right)^2=1$ together with \eqref{eq.times2}. We can do this in terms of partition calculus. That is, we need to prove the following
$$\langle\singleton\otimes\singleton,\globcolext,(\singleton\otimes\singext)^{\otimes 2k}\rangle^{\tcol}=\langle\singleton\otimes\singleton,(\positionerext)^{\otimes k}\rangle^{\tcol}.$$
The two-colored version of this equality reads as
$$\langle\singleton[w]\otimes\singleton[b],\idpart[b/w]\otimes\idpart[w/b],\singleton[w]^{\otimes 2k}\rangle=\langle\singleton[w]\otimes\singleton[b],\idpart[b/w]\otimes\idpart[w/b],\idpart[b/w]^{\otimes k}\rangle.$$
This can be proven using \cite[Lemma 3.6]{Gro18}.
\end{proof}

\subsection{The easy case}
\label{secc.easy}

Recall that a two-colored category $\Cat\subset\Part\twocol$ is called \emph{globally-colorized} if it contains the partition $\idpart[b/w]\otimes\idpart[w/b]$ \cite[Definition 2.3]{TW18}, which is equivalent to saying that $\Cat$ is invariant with respect to arbitrary color permutations of partitions with lower points only.

Given a two-colored category of partitions $\Cat\subset\Part\twocol$, we denote by $\Cat_0$ the category containing partitions $p\in\Cat(w_1,w_2)$ with zero color sum, that is, $c(p):=c(w_2)-c(w_1)=0$ \cite[Definition~3.1]{Gro18} (see \cite{TW18,Gro18} for details).

\begin{lem}
\label{L.PsiAlt}
Let $\Cat\subset\Part$ be a category of partitions. Then
$$(\Psi^{-1}(\Cat))_0=\langle\Alt\Cat,\idpart[b/w]\otimes\idpart[w/b]\rangle.$$
\end{lem}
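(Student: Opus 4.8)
The plan is to prove the two inclusions separately, writing $\Kat:=\langle\Alt\Cat,\idpart[b/w]\otimes\idpart[w/b]\rangle$ for the right-hand side. For the inclusion $\Kat\subset(\Psi^{-1}(\Cat))_0$ I would first note that $(\Psi^{-1}(\Cat))_0$ is itself a category: the color sum $c$ is additive under tensor product and composition and changes sign under the involution, so the condition $c(p)=0$ is preserved by all category operations, and the obligatory generators $\idpart[w/w]$, $\idpart[b/b]$, $\pairpart$ all have zero color sum. It then suffices to check that each generator of $\Kat$ lies in $(\Psi^{-1}(\Cat))_0$. Every generator of $\Alt\Cat$ is an alternating-colored partition whose underlying partition lies in $\Cat$, and a direct parity count shows such a partition has $c(p)=0$: both rows start white and, the total length being even, satisfy $|w_1|\equiv|w_2|\pmod 2$, whence $c(w_1)=c(w_2)$. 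Likewise $\idpart[b/w]\otimes\idpart[w/b]$ has color sum $0$ and underlying partition $\idpart\otimes\idpart\in\Cat$. Since all generators of $\Kat$ lie in the category $(\Psi^{-1}(\Cat))_0$, the inclusion follows.

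For the reverse inclusion I would exploit two features of $\Kat$: it is closed under rotation (it contains the pair partitions, as every category does) and, containing $\idpart[b/w]\otimes\idpart[w/b]$, it is globally colorized, hence invariant under arbitrary color permutations of the lower points \cite{TW18}. Take $p\in(\Psi^{-1}(\Cat))_0$ with underlying partition $P:=\Psi(p)\in\Cat$; from $c(p)=0$ one gets $|p|$ even. I would then rotate every point of $p$ down to a single lower row, obtaining an all-lower partition $\hat p$. Rotation is an invertible category operation, so $p\in\Kat$ if and only if $\hat p\in\Kat$; moreover $\hat P:=\Psi(\hat p)$ is the corresponding rotation of $P$ and still lies in $\Cat$, while $\hat p$ inherits color sum $0$ and therefore has equal numbers $n$ of white and black points among its $2n$ lower points.

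The decisive step, and the one I expect to be the main obstacle, is exactly this rotation: once everything sits in a single row, the only remaining invariant of the coloring is the total white/black count, which is precisely what global colorization is allowed to act on. An attempt to recolor $p$ row by row fails, because the per-row color sums of $p$ need not match those of any alternating coloring; rotating to one row removes that rigidity. To finish, I would compare $\hat p$ with an explicit alternating model: let $\alpha(P)$ be $P$ with both rows colored alternately $wbwb\cdots$ starting white. Since $|w_1|\equiv|w_2|\pmod 2$, this is a genuine alternating-colored partition, so $\alpha(P)\in\Alt\Cat$, and rotating it down yields $\widehat{\alpha(P)}\in\Alt\Cat\subset\Kat$ (rotation preserves membership in the category $\Alt\Cat$).

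Both $\hat p$ and $\widehat{\alpha(P)}$ are all-lower partitions with the same underlying partition $\hat P$ and the same counts ($n$ white, $n$ black), so they differ only by a permutation of the colors of the lower points. By global colorization of $\Kat$ this forces $\hat p\in\Kat$, and rotating back gives $p\in\Kat$. This establishes $(\Psi^{-1}(\Cat))_0\subset\Kat$ and, together with the first paragraph, the claimed equality.
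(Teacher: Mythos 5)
Your proof is correct and takes essentially the same route as the paper's: the paper's one-line argument is precisely that $\Alt\Cat$ supplies particular zero-sum colorings and that adding the globally-colorizing partition then yields \emph{all} zero-sum colorings, which is exactly what your rotate-to-one-row plus color-permutation step makes explicit (the invariance under color permutations of lower-row partitions being the content of the result from \cite{TW18} the paper invokes implicitly). Your additional verifications --- that $(\Psi^{-1}(\Cat))_0$ is closed under the category operations and that alternating colorings have zero color sum --- are details the paper leaves unstated, not a different method.
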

\begin{proof}
The category $\Alt\Cat$ contains some particular zero-sum colorings of partitions in $\Cat$. Adding the globally-colorizing partition $\idpart[b/w]\otimes\idpart[w/b]$ we have that $\langle\Alt\Cat,\idpart[b/w]\otimes\idpart[w/b]\rangle$ contains all the zero-sum colorings and hence the category coincides with $(\Psi^{-1}(\Cat))_0$.
\end{proof}

\begin{defn}
Let $\Cat\subset\Part$ be a category of partitions such that $\singleton\not\in\Cat$. We define the following categories of partitions with extra singletons.
$$\Cat^{\tcol}_0:=\langle\Cat,\globcolext\rangle^{\tcol},\qquad \Cat^{\tcol}_{2k}:=\langle\Cat,(\positionerext)^{\otimes k}\rangle^{\tcol}$$
for any $k\in\N$. If $\singleton\otimes\singleton\in\Cat$, then we also define
$$\Cat^{\tcol}_k:=\langle \Cat,\globcolext,(\singleton\otimes\singext)^{\otimes k}\rangle^{\tcol}.$$
\end{defn}

Recall the proof of Proposition \ref{P.timeskseq}, where we showed that, for $\singleton\otimes\singleton\in\Cat$, the two above mentioned definitions of $\Cat_{2k}^{\tcol}$ coincide.

\begin{lem}
\label{L.Fk}
Let $\Cat\subset\Part$ be a category of partitions such that $\singleton\not\in\Cat$. Then
$$\Psi^{-1}(\Cat)_0=F(\Cat^{\tcol}_0),\qquad\langle\Psi^{-1}(\Cat)_0,\idpart[w/b]^{\otimes k}\rangle=F(\Cat^{\tcol}_{2k}),\qquad\langle\Psi^{-1}(\Cat)_0,\singleton[w]^{\otimes k}\rangle=F(\Cat^{\tcol}_k),$$
where $k\in\N$ and the last equality makes sense only if $\singleton\otimes\singleton\in\Cat$.
\end{lem}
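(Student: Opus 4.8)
The plan is to push the functor $F$ through each of the three generating presentations by means of Proposition \ref{P.Fgen} (which gives $F(\langle S\rangle^{\tcol})=\langle F(S)\rangle$), and then to identify the resulting two-colored categories using Lemmas \ref{L.FAlt} and \ref{L.PsiAlt}. First I would record the images of the extra generators. By Example \ref{ex.F} we have $F(\globcolext)=\idpart[b/w]\otimes\idpart[w/b]$, $F(\positionerext)=\idpart[b/w]$, and $F(\singleton\otimes\singext)=\singleton[w]$. Since $\positionerext$ has both rows of even length (namely two), and $\singleton\otimes\singext$ has rows of length $0$ and $2$, Proposition \ref{P.F}(2) shows that no color inversion occurs when taking tensor powers, so $F((\positionerext)^{\otimes k})=\idpart[b/w]^{\otimes k}$ and $F((\singleton\otimes\singext)^{\otimes k})=\singleton[w]^{\otimes k}$. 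Finally, Lemma \ref{L.FAlt} gives $\langle F(\Cat)\rangle=\Alt\Cat$ (this is where the hypothesis $\singleton\not\in\Cat$ enters).

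With these computations the first and third equalities are immediate. For the first, Proposition \ref{P.Fgen} yields $F(\Cat^{\tcol}_0)=\langle F(\Cat),\idpart[b/w]\otimes\idpart[w/b]\rangle=\langle\Alt\Cat,\idpart[b/w]\otimes\idpart[w/b]\rangle$, which equals $(\Psi^{-1}(\Cat))_0$ by Lemma \ref{L.PsiAlt}. For the third (where $\singleton\otimes\singleton\in\Cat$), the same reasoning gives $F(\Cat^{\tcol}_k)=\langle\Alt\Cat,\idpart[b/w]\otimes\idpart[w/b],\singleton[w]^{\otimes k}\rangle$, and collapsing the first two generators via Lemma \ref{L.PsiAlt} turns this into $\langle(\Psi^{-1}(\Cat))_0,\singleton[w]^{\otimes k}\rangle$, as required.

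The second equality is the delicate one and is where I expect the main obstacle. Proposition \ref{P.Fgen} gives $F(\Cat^{\tcol}_{2k})=\langle\Alt\Cat,\idpart[b/w]^{\otimes k}\rangle$, whereas the target category is $\langle(\Psi^{-1}(\Cat))_0,\idpart[w/b]^{\otimes k}\rangle=\langle\Alt\Cat,\idpart[b/w]\otimes\idpart[w/b],\idpart[w/b]^{\otimes k}\rangle$ by Lemma \ref{L.PsiAlt}. Since $\idpart[w/b]^{\otimes k}=(\idpart[b/w]^{\otimes k})^*$, both powers lie in either category, so the two presentations differ only in whether the globally colorizing partition $\idpart[b/w]\otimes\idpart[w/b]$ is listed explicitly. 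Hence the whole equality reduces to the single claim that $\idpart[b/w]\otimes\idpart[w/b]\in\langle\idpart[b/w]^{\otimes k}\rangle$.

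To settle this claim I would argue by explicit partition calculus, in the spirit of \cite[Lemma 3.6]{Gro18} as used in Proposition \ref{P.timeskseq}. Starting from $\idpart[b/w]^{\otimes k}$ and its adjoint $\idpart[w/b]^{\otimes k}$, form $\idpart[b/w]^{\otimes k}\otimes\idpart[w/b]^{\otimes k}$, whose $2k$ through-strands carry colors $\wcol^k\bcol^k$ on the top and $\bcol^k\wcol^k$ on the bottom. Capping each of the $k-1$ inner adjacent black--white pairs of lower points by a pair partition (present in every two-colored category) peels off one white--black pair at a time while creating an inner upper pair of strands; composing from above with a matching partition built from identities and cups then closes these inner upper pairs, leaving exactly the two surviving outer strands $\idpart[b/w]\otimes\idpart[w/b]$ up to removed loops. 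This exhibits the globally colorizing partition inside $\langle\idpart[b/w]^{\otimes k}\rangle$ and completes the second equality. The only genuine work is the bookkeeping of colors and positions in this contraction; the structural skeleton of the whole proof is the functoriality of $F$ together with Lemmas \ref{L.FAlt} and \ref{L.PsiAlt}.
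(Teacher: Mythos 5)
Your proof is correct and follows essentially the same route as the paper's: Proposition \ref{P.Fgen} to push $F$ through the generating sets, Lemma \ref{L.FAlt} to identify $\langle F(\Cat)\rangle=\Alt\Cat$, and Lemma \ref{L.PsiAlt} to rewrite $\Psi^{-1}(\Cat)_0$, together with the computation of the images of the extra generators under $F$. The only difference is that you explicitly isolate and prove the step $\idpart[b/w]\otimes\idpart[w/b]\in\langle\idpart[b/w]^{\otimes k}\rangle$ --- your cap-and-close contraction of $\idpart[b/w]^{\otimes k}\otimes\idpart[w/b]^{\otimes k}$ is valid --- whereas the paper's one-line chain of equalities leaves this generation fact implicit in its final step; rightly identifying and verifying it is an addition, not a deviation.
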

\begin{proof}
The proof is similar in all three cases using Proposition \ref{P.Fgen}, Lemma \ref{L.PsiAlt}. Take, for example, the middle one. We have
\[F(\Cat^{\tcol}_{2k})=\langle F(\Cat),F((\positionerext)^{\otimes k})\rangle=\langle\Alt\Cat,\idpart[w/b]^{\otimes k}\rangle=\langle\Psi^{-1}(\Cat)_0,\idpart[w/b]^{\otimes k}\rangle.\qedhere\]
\end{proof}

To summarize the results and constructions presented here, we formulate the following proposition.

\begin{prop}
\label{P.prodtab}
Let $\Cat\subset\Part$ be a category of partitions such that $\singleton\not\in\Cat$ corresponding to a quantum group $G\subset O_N^+$. Then Table~\ref{tab.prods} shows the quantum groups corresponding to the various categories constructed from $\Cat$. All the categories are mutually distinct (and hence also the quantum groups for large enough $N$).
\end{prop}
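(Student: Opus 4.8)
The plan is first to record that the correspondences displayed in Table~\ref{tab.prods} require no new argument, so that the whole content lies in distinctness. The rows $\langle\Cat\rangle^{\tcol}\leftrightarrow G*\hat\Z_2$ and $\langle\Cat,\positionerext\rangle^{\tcol}\leftrightarrow G\times\hat\Z_2$ are Proposition~\ref{P.prods}; the rows for $\Cat^{\tcol}_0$ and $\Cat^{\tcol}_{2k}$ come from the definitions of $G\times_0\hat\Z_2=G\ttimes\hat\Z_2$ and $G\times_{2k}\hat\Z_2$ together with Proposition~\ref{P.timeskseq}; and the $\bistar$-row $\langle\Cat,(\singleton\otimes\singext)^{\otimes k}\rangle^{\tcol}$ and the $\bitimes$-row $\Cat^{\tcol}_k$ are the very definitions of $G\bistar{s}{}{k}\hat\Z_2$ and $G\bitimes{s}{}{k}\hat\Z_2$, with the inclusions furnished by Proposition~\ref{P.incl2}.

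I would deduce distinctness by passing to two-colored partitions. By Theorem~\ref{T.F} the functor $F$ is a bijection between categories of extra singletons of even length and color-inversion-invariant two-colored categories, so two of our categories coincide if and only if their $F$-images do. Using Lemmas~\ref{L.FAlt}, \ref{L.FPsi}, \ref{L.Fk} and Proposition~\ref{P.Fgen} these images are $\Alt\Cat$, $\Psi^{-1}(\Cat)_0$, $\langle\Alt\Cat,\singleton[w]^{\otimes k}\rangle$ (for $\bistar$), $\langle\Psi^{-1}(\Cat)_0,\singleton[w]^{\otimes k}\rangle$ (for $\bitimes$), and $\Psi^{-1}(\Cat)$. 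To each I attach two invariants: the degree of reflection $d$ of the associated glued quantum group, and a Boolean $g$ recording whether the category is globally colorized. The first is elementary. Since the color sum $c$ is additive under $\otimes$ and composition and changes sign under $*$, the set $\{c(p)\}$ is the subgroup of $\Z$ generated by the values of $c$ on a generating set; as $\singleton\notin\Cat$ forces $\Cat$, hence $\Alt\Cat$ and $\Psi^{-1}(\Cat)_0$, to consist of zero-sum partitions, while $c(\singleton[w]^{\otimes k})=k$, one reads off $d=0$ for $\Alt\Cat,\Psi^{-1}(\Cat)_0$ and $d=k$ after adjoining $\singleton[w]^{\otimes k}$. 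By Proposition~\ref{P.k} this $d$ is genuinely the degree of reflection, hence an invariant, and it separates all categories carrying different parameters $k$.

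It remains to separate the two categories sharing each value of $d$, i.e. the ``free type'' against the ``tensor type'', which is exactly the presence or absence of the globally-colorizing partition $\idpart[b/w]\otimes\idpart[w/b]$ (equivalently, via $F$, of $\globcolext$). I expect this to be the main obstacle, since the degree-of-reflection invariant is blind to it: the globally-colorizing partition has $c=0$. For the basic case $G*\hat\Z_2$ versus $G\times_0\hat\Z_2$ I would use the following invariant of a two-colored partition: whether it admits a \emph{consistent} alternating coloring, i.e.\ a $\Z_2$-valued phase that is constant along the boundary cycle (upper row left to right, then lower row right to left with colors dualized). This phase is preserved by the category operations and every generator of $\Alt\Cat=F(\langle\Cat\rangle^{\tcol})$ carries it, whereas $\idpart[b/w]\otimes\idpart[w/b]$ manifestly does not; this yields $\Alt\Cat\subsetneq\Psi^{-1}(\Cat)_0$ and hence $G*\hat\Z_2\supsetneq G\times_0\hat\Z_2$. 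By Proposition~\ref{P.tilstar} this is precisely the statement that the glued free product $G\tilstar\hat\Z_2$ differs from the glued tensor product $G\tiltimes\hat\Z_2$.

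The delicate point, and the step I expect to cost the most work, is the same separation after the white singletons $\singleton[w]^{\otimes k}$ have been adjoined (the $\bistar$ versus $\bitimes$ rows): the singletons shift the phase, so the naive invariant no longer vanishes on the free-type category and must be combined with the color-sum grading. I would therefore work inside the zero-sum subcategory, where global colorization is detected, and show that adjoining $\singleton[w]^{\otimes k}$ to $\Alt\Cat$ still does not produce $\idpart[b/w]\otimes\idpart[w/b]$, using the color-manipulation techniques of \cite{Gro18} already invoked in Proposition~\ref{P.timeskseq}. Once every category is assigned a distinct pair $(d,g)$, the categories are pairwise distinct; finally, for $N$ large enough the maps $T_p$ attached to distinct partitions are linearly independent, so distinct categories give distinct intertwiner spaces and therefore distinct quantum groups, which is the parenthetical claim.
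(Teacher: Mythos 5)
Your overall skeleton is sound where it overlaps with the paper: the identifications in the lower row of Table~\ref{tab.prods} do follow from Proposition~\ref{P.prods} and the definitions of the products, passing through $F$ via Theorem~\ref{T.F} is legitimate because all categories involved consist of even-length partitions, and your color-sum argument is a correct stand-in for the paper's citation of \cite[Lemma 3.7]{Gro18}: since $c$ is additive under $\otimes$ and composition and flips sign under involution, the $c$-values of a category form the subgroup of $\Z$ generated by the $c$-values of a generating set, and the groups $\{0\}$, $2k\Z$, $2\Z$ separate $\Psi^{-1}(\Cat)_0$, $\langle\Psi^{-1}(\Cat)_0,\idpart[w/b]^{\otimes k}\rangle$ and $\Psi^{-1}(\Cat)$. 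But you mis-transcribe the table: it contains no columns built from $(\singleton\otimes\singext)^{\otimes k}$ (the third column adjoins $\idpart[w/b]^{\otimes k}$, of color sum $\pm 2k$, not $\singleton[w]^{\otimes k}$), so your fourth paragraph — which you yourself leave as a plan rather than a proof — addresses a separation the proposition never asks for. You also do not verify the upper half of the table, i.e.\ that $\Psi^{-1}(\Cat)_0$ and $\langle\Psi^{-1}(\Cat)_0,\idpart[w/b]^{\otimes k}\rangle$ correspond to the glued products $G\tiltimes\hat\Z$ and $G\tiltimes\hat\Z_{2k}$; the paper gets this from \cite[Theorem 5.1]{Gro18}.

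The genuine gap is the one separation that, after the color-sum step, actually remains: $\Alt\Cat\subsetneq\Psi^{-1}(\Cat)_0$, equivalently $\langle\Cat\rangle^{\tcol}\subsetneq\langle\Cat,\globcolext\rangle^{\tcol}$. Your ``constant phase along the boundary cycle'' invariant fails here, because it is not stable under the tensor product. Concretely, the base pair partitions $\pairpart[wb]$ and $\pairpart[bw]$ belong to \emph{every} two-colored category, hence so does $\pairpart[wb]\otimes\pairpart[bw]\in\Alt\Cat$; its lower row reads $\wcol\,\bcol\,\bcol\,\wcol$, with two equal colors at adjacent boundary positions, so under any traversal or dualization convention it admits neither an alternating reading nor a constant value of a phase of the form (color)${}\oplus{}$(position parity) — the two tensor factors carry \emph{opposite} phase constants and concatenation shifts parity. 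Thus either elements of $\Alt\Cat$ violate your invariant or the invariant excludes nothing, and in both cases the conclusion $\idpart[b/w]\otimes\idpart[w/b]\notin\Alt\Cat$ does not follow; a global $\Z_2$-phase cannot detect global colorization. The paper's proof instead works on the extra-singleton side with an operation-stable combinatorial invariant: every $p\in\langle\Cat\rangle^{\tcol}$ admits a pairing of its extra singletons that does not cross the blocks of color $\lincol$ (a property preserved by tensor products, compositions and involution, since the generators have no extra singletons and the base partitions pair trivially), whereas $\globcolext$ visibly fails it. If you insist on staying on the two-colored side, the invariant you need is the $F$-transport of exactly this non-crossing pairing condition — a nesting condition on the deviations from an alternating coloring, not a phase — so this step requires a genuinely new argument in your write-up.
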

\begin{table}
\begin{tabular}{l|cccc}
two-colored category $\tilde\Cat$          &  $\Alt\Cat$  &  $\Psi^{-1}(\Cat)_0$  &  $\langle\Psi^{-1}(\Cat)_0,\idpart[w/b]^{\otimes k}\rangle$  &  $\Psi^{-1}(\Cat)$\\
corresp. quantum group &  $G\tilstar\hat\Z_2$  &  $G\tiltimes\hat\Z$  &  $G\tiltimes\hat\Z_{2k}$  &  $G=G\tiltimes\hat\Z_2$\\\hline
the preimage $F^{-1}(\tilde\Cat)$   &  $\langle\Cat\rangle^{\tcol}$  &  $\langle\Cat,\globcolext\rangle^{\tcol}$  &  $\langle\Cat,(\positionerext)^{\otimes k}\rangle^{\tcol}$  &  $\langle\Cat,\positionerext\rangle^{\tcol}$ \\
corresp. quantum group &  $G*\hat\Z_2$  &  $G\ttimes\hat\Z_2$  &  $G\times_{2k}\hat\Z_2$  &  $G\times\hat\Z_2$
\end{tabular}
\medskip
\caption{Categories of partitions corresponding to various glued products and their ``$\Z_2$-unglued'' versions.}% In the third column, we assume $k$ is even.}
\label{tab.prods}
\end{table}
%\begin{table}
%\begin{tabular}{l|ccccc}
%two-colored category $\tilde\Cat$  &  $\langle\Alt\Cat,\singleton[w]^{\otimes k}\rangle$                       &  $\langle\Psi^{-1}(\Cat)_0,\singleton[w]^{\otimes k}\rangle$  \\
%corresp. quantum group             &  $G\tilstar\hat\Z_{k}$                                                  &  $G\tiltimes\hat\Z_{k}$  \\\hline
%the preimage $F^{-1}(\tilde\Cat)$  &  $\langle\Cat,(\singleton\otimes\singext)^{\otimes k}\rangle^{\tcol}$ &  $\langle\Cat,\globcolext,(\singleton\otimes\singext)^{\otimes k}\rangle^{\tcol}$  \\
%corresp. quantum group             &  $G*^s_{k}\hat\Z_2$                                                     &  $G\times^s_{k}\hat\Z_2$  
%\end{tabular}
%\medskip
%\caption{Categories of partitions corresponding to various glued products and their ``$\Z_2$-unglued'' versions under additional assumption $\singleton\otimes\singleton\in\Cat$. We denote $s:=\sum_kv_{ik}$.}
%\label{tab.prods2}
%\end{table}
%\begin{table}
%\begin{tabular}{||P{2.5cm}|P{2.5cm}||P{3cm}|P{2.5cm}||p{2.5cm}}
%two-colored category $\tilde\Cat$ & corresponding QG & the preimage $F^{-1}(\tilde\Cat)$ & corresponding QG & restrictions\\\hline
%$\Alt\Cat$ & $G\tilstar\hat\Z_2$ & $\langle\Cat\rangle^{\tcol}$ & $G*\hat\Z_2$\\
%$\langle\Alt\Cat,\singleton[w]^{\otimes k}\rangle$ & $G_1\tilstar\hat\Z_k$ & $\langle\Cat,(\singleton\otimes\singext)^{\otimes k}\rangle^{\tcol}$ & $G\bistar{s}{}{k}\hat\Z_2$ & $\singleton\otimes\singleton\in\Cat$
%\end{tabular}
%\end{table}
\begin{proof}
First, let us check that the first row indeed maps to the third row under $F^{-1}$. For the first column it follows from Lemma \ref{L.FAlt}. For the last column, it follows from Lemma \ref{L.FPsi}. For the rest, it follows from Lemma \ref{L.Fk}.

Now, let us check the quantum group picture. Let us start with the upper part of the table. The first column was proven in Proposition \ref{P.tilstar}. The rest follows from \cite[Theorem 5.1]{Gro18} (see also Section 5.3 of \cite{Gro18}). For the lower part of the table, the first and last column follow from Proposition \ref{P.prods} and the rest follows directly from the definitions of the products.

The mutual inequality of the categories in the last three columns follows from \cite[Lemma 3.7]{Gro18}. Thanks to the obvious inclusions, it remains only to prove inequality between the first two columns. It can be seen that $\langle\Cat\rangle^{\tcol}$ contains only those partitions with extra singletons where we can find a pairing of the extra singletons that does not cross the blocks of color $\lincol$. Since $\globcolext$ does not satisfy this property, we have $\langle\Cat\rangle^{\tcol}\subsetneq\langle\Cat,\globcolext\rangle^{\tcol}$.
\end{proof}

%\begin{rem}
%With help of \cite[Lemma 3.7]{Gro18}, one can easily see that all the categories in the tables are mutually distinct (except for third column of Table~\ref{tab.prods} being equal to the first column of Table~\ref{tab.prods2} as was mentioned in Proposition \ref{P.timeskseq}). Hence, the corresponding quantum groups are also mutually distinct if the dimension $N$ is sufficiently high. In particular, the inclusions in propositions \ref{P.incl}, \ref{P.incl2} are strict for easy quantum groups $G$ with $\singleton\not\in\Cat$. This shows that the $\Z_2$ extensions defined in \ref{D.ttimes}, \ref{D.timesk}, \ref{D.timesks} indeed define new quantum groups interpolating the free and tensor product.
%\end{rem}

\section{Concrete classification results}
\label{sec.ex}

In this section, we use Theorem \ref{T.F} to transfer the available classification results for unitary two-colored partitions to the case of categories of partitions with extra singletons.

\subsection{Globally-colorized categories}

Recall \cite[Definition 2.3]{TW18} that a category of two-colored partitions $\Cat\in\Part\twocol$ is \emph{globally-colorized} if $\pairpart[ww]\otimes\pairpart[bb]\in\Cat$ or, equivalently, $\idpart[b/w]\otimes\idpart[w/b]\in\Cat$. This holds if and only if the category $F^{-1}(\Cat)\subset\Part^{\tcol}$ contains the partition $\globcolext$ (see Example \ref{ex.F}).

All globally-colorized categories were classified in \cite{Gro18}. This result induces a classification of all categories of partitions with extra singletons containing the element $\globcolext$. The classification result can be phrased as follows.

\begin{thm}
Every category of partitions with extra singletons containing only partitions of even length is of the form $\Cat^{\tcol}_k$, where $\Cat\subset\Part$ is some category of partitions such that $\singleton\not\in\Cat$ and $k\in\N_0$ is even unless $\singleton\otimes\singleton\in\Cat$. Distinct pairs $(\Cat,k)$ define distinct categories $\Cat^{\tcol}_k$ with the exception that
\begin{align*}
\langle\singleton\otimes\singleton,(\singleton\otimes\singext)^{\otimes k}\rangle&=\langle\positionerpart,(\singleton\otimes\singext)^{\otimes k}\rangle\\
\langle\halflibpart,\singleton\otimes\singleton,\globcolext,(\singleton\otimes\singext)^{\otimes k}\rangle&=\langle\crosspart,\singleton\otimes\singleton,\globcolext,(\singleton\otimes\singext)^{\otimes k}\rangle
\end{align*}
for all $k$ odd.
\end{thm}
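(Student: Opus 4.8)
The plan is to transport the classification of globally-colorized two-colored categories from \cite{Gro18} across the correspondence of Theorem \ref{T.F}. By Example \ref{ex.F} we have $F(\globcolext)=\idpart[b/w]\otimes\idpart[w/b]$, so under the bijection of Theorem \ref{T.F} a category of extra singletons of even length contains $\globcolext$ precisely when its image is globally-colorized; since every $\Cat^{\tcol}_k$ contains $\globcolext$, the statement amounts to classifying those even-length categories containing $\globcolext$, equivalently all globally-colorized two-colored categories. First I would record that each such two-colored category is automatically invariant under color inversion, so that Theorem \ref{T.F} genuinely applies to it: its zero-sum part is a full preimage $\Psi^{-1}(\Cat)_0$ and hence inversion-invariant, while the color inversion of the single extra generator is recovered by combining its involution with rotations using the pair partitions.

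For \emph{existence} I would invoke the structural form of \cite[Theorem 5.1]{Gro18}: a globally-colorized category $\tilde\Cat$ is generated by its zero-sum part $\tilde\Cat_0=\Psi^{-1}(\Cat)_0$, with $\Cat:=\Psi(\tilde\Cat_0)$ an ordinary category satisfying $\singleton\notin\Cat$, together with one generator realizing the degree of reflection $m$ determined by $\{c(p):p\in\tilde\Cat\}=m\Z$. Lemma \ref{L.Fk} identifies the three possible shapes $\Psi^{-1}(\Cat)_0$, $\langle\Psi^{-1}(\Cat)_0,\idpart[w/b]^{\otimes k}\rangle$, $\langle\Psi^{-1}(\Cat)_0,\singleton[w]^{\otimes k}\rangle$ with $F(\Cat^{\tcol}_0)$, $F(\Cat^{\tcol}_{2k})$, $F(\Cat^{\tcol}_k)$ respectively. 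Applying $F^{-1}$ and using $F^{-1}(F(\Cat^{\tcol}_m))=\Cat^{\tcol}_m$ from Theorem \ref{T.F} then yields that every category in question equals some $\Cat^{\tcol}_m$.

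For the parity condition I would show that an odd degree of reflection forces $\singleton\otimes\singleton\in\Cat$: a partition of odd color sum has odd total length, and in a globally-colorized category it can be capped by pair partitions down to a single colored singleton $\singleton[w]$; tensoring $\singleton[w]$ with its color inverse $\singleton[b]$ gives a zero-sum partition whose forgetful image reduces to $\singleton\otimes\singleton$, which therefore lies in $\Cat=\Psi(\tilde\Cat_0)$. Hence odd $m$ occurs only when $\singleton\otimes\singleton\in\Cat$, as claimed, and this is exactly the range in which the generator $\singleton[w]^{\otimes k}$ (rather than $\idpart[w/b]^{\otimes k}$) is available.

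\emph{Distinctness} I would obtain by recovering $\Cat$ as the forgetful image of the zero-sum part and $m$ as the degree of reflection; distinct pairs then give distinct categories except when adjoining an odd-degree generator $\singleton[w]^{\otimes k}$ enlarges the zero-sum part and thereby identifies two a priori different base categories. These collapses transport directly from the two-colored distinctness analysis \cite[Lemma 3.7]{Gro18} (together with \cite[Lemma 3.6]{Gro18}, already used in Proposition \ref{P.timeskseq}). I expect this last point to be the main obstacle: verifying that the only identifications are the two listed families with $k$ odd requires precise control of how $\singleton[w]^{\otimes k}$ with odd $k$ combines with the generators $\positionerpart$ on one hand and $\halflibpart$, $\crosspart$ on the other, upgrading $\langle\singleton\otimes\singleton\rangle$ to $\langle\positionerpart\rangle$ and $\langle\halflibpart,\dots\rangle$ to $\langle\crosspart,\dots\rangle$ inside the enlarged zero-sum part.
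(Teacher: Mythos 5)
Your route is essentially the paper's own: the paper proves this theorem by applying Theorem \ref{T.F} to the classification of globally-colorized categories in \cite[Theorem 3.1]{Gro18}, using Lemma \ref{L.Fk} to identify the preimages of the categories appearing in \cite[Lemmas 3.6 and 3.7]{Gro18}, and quoting the proof of \cite[Theorem 3.1]{Gro18} for mutual distinctness. Your existence and distinctness steps are exactly this transport (one citation slip: the classification statement is \cite[Theorem 3.1]{Gro18}, whereas Theorem 5.1 there is the quantum group correspondence invoked in Proposition \ref{P.prodtab}), and your explicit verification that the relevant two-colored categories are invariant under color inversion --- which the paper only asserts parenthetically, although it is needed for Theorem \ref{T.F} to apply --- is correct and a genuine gain in completeness.

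However, your self-contained argument for the parity condition fails. The caps available in an arbitrary two-colored category are the rotations of $\pairpart[wb]$ and $\pairpart[bw]$; each removes one white and one black point, so capping \emph{preserves} the color sum $c$. Hence a partition with $c(p)=k$ can be capped down to length $|k|$ at best, never to $\singleton[w]$ unless $|k|=1$: indeed $\singleton[w]\in\tilde\Cat$ would put $1$ into the color-sum group $k\Z$ and force $k=1$. For the categories with odd $k\ge3$ occurring in Table \ref{tab.globcol}, the singleton $\singleton[w]$ is genuinely absent, so your capping claim is false there, not merely unproved. The conclusion (odd color sums force $\singleton\otimes\singleton\in\Cat$) is still true, but the capping has to happen on the one-colored side, where there is no color obstruction: given $p\in\tilde\Cat$ with $c(p)$ odd, hence of odd length $l$, pass to the zero-sum partition $p\otimes\bar p$ (using the inversion invariance you established), whose forgetful image is $\rho\otimes\rho\in\Cat$ with $\rho:=\Psi(p)$ of odd length; capping the adjacent pairs $(2i,2i+1)$ for $1\le i\le (l-1)/2$ inside each tensor factor leaves exactly one surviving point per factor, which is then automatically a singleton, and since no cap joins the two factors the result is $\singleton\otimes\singleton\in\Cat$. (Alternatively, simply cite \cite{Gro18}, where this parity statement is packaged into the classification, as the paper implicitly does.)
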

\begin{proof}
As already mentioned, we just apply Theorem \ref{T.F} to the classification \cite{Gro18}. The statement is then just reformulation of \cite[Theorem 3.1]{Gro18}. Using Lemma \ref{L.Fk} we find the preimage of the categories mentioned in \cite[Lemma 3.6, Lemma 3.7]{Gro18} by the functor $F$ (all of the categories are invariant with respect to the color inversions). The mutual inequality is discussed in the proof of \cite[Theorem~3.1]{Gro18}.
\end{proof}

%The classification result \cite{Gro18} is formulated using two important kinds of generators. Firstly, the generator $\singleton[w]^{\otimes k}$ corresponds in the extra-singleton language to the partition $(\singleton\otimes\singext)^{\otimes k}$. Second important partition is $\pairpart[ww]^{\otimes k/2}$, which corresponds to $(\LPartition{\Lt:2,4}{0.8:1,3})^{\otimes k/2}$. Note that by a slight reformulation of \cite[Lemma 4.4]{TW17}, we can see that $\pairpart[ww]^{\otimes k/2}$ can be exchanged for $\idpart[w/b]^{\otimes k/2}$, which corresponds to $(\positionerext)^{\otimes k/2}$.
%
%If we are interested just in a list of all categories with extra singletons obtained from the globally-colorized ones, we can just apply $F^{-1}$ to the results in \cite[Theorem 3.10]{Gro18}.
%
%\begin{prop}
%Let $\Cat\subset\Part^{\tcol}\even$ be a category with extra singletons such that $\globcolext\in\Cat$. Then $\Cat$ equals to one of the categories in Table~\ref{tab.globcol}. All categories in Table~\ref{tab.globcol} are pairwise unequal except for those in the rows marked by asterist ($*$) being a special instance of the group-theoretical case.
%\end{prop}

So, if we want to obtain a list of all categories with partitions with extra singletons, we just need to take the classification of all categories of partitions $\Cat$ \cite{RW16} and construct the categories $\Cat^{\tcol}_k$. We already did similar work in the unitary two-colored case, so we can just copy the result applying the functor $F$ on \cite[Table 1]{Gro18}. The result is listed in Table~\ref{tab.globcol}. The corresponding quantum groups were described in Section \ref{sec.prods}.

\begin{table}
\begin{tabular}{lll}
Non-crossing:          & $\langle\globcolext,(\positionerext)^{\otimes k/2}\rangle$, $k\in 2\N_0$\\
                       & $\langle\fourpart,\globcolext,(\positionerext)^{\otimes k/2}\rangle$, $k\in 2\N_0$\\
                       & $\langle\singleton\otimes\singleton,\globcolext,(\singleton\otimes\singext)^{\otimes k}\rangle$, $k\in 2\N_0$\\
                       & $\langle\Pabac,\globcolext,(\singleton\otimes\singext)^{\otimes k}\rangle$, $k\in \N_0$ & \\
                       & $\langle\fourpart,\singleton\otimes\singleton,\globcolext,(\singleton\otimes\singext)^{\otimes k}\rangle$, $k\in \N_0$ & \\
$\crosspart\in\Cat$:   & $\langle\crosspart,\globcolext,(\positionerext)^{\otimes k/2}\rangle$, $k\in 2\N_0$\\
                       & $\langle\crosspart,\singleton\otimes\singleton,\globcolext,(\singleton\otimes\singext)^{\otimes k}\rangle$, $k\in\N_0$&\\
                       & $\langle\crosspart,\fourpart,\singleton\otimes\singleton,\globcolext,(\singleton\otimes\singext)^{\otimes k}\rangle$, $k\in\N_0$&\\
       & $\langle\crosspart,\fourpart,\globcolext,(\positionerext)^{\otimes k/2}\rangle$, $k\in 2\N_0$&($*$)\\
$\halflibpart\in\Cat$: & $\langle\halflibpart,\globcolext,(\positionerext)^{\otimes k/2}\rangle$, $k\in 2\N_0$&\\
                       & $\langle\halflibpart,\singleton\otimes\singleton,\globcolext,(\singleton\otimes\singext)^{\otimes k}\rangle$, $k\in 2\N_0$&\\
       & $\langle\halflibpart,\fourpart,\globcolext,(\positionerext)^{\otimes k/2}\rangle$, $k\in 2\N_0$ & $(*)$\\
                       & $\langle\halflibpart,\fourpart,h_s,\globcolext,(\positionerext)^{\otimes k/2}\rangle$, $k\in 2\N_0, s\ge 3$&($*$)\\
The rest:              & $\langle\pi_s,\globcolext,(\positionerext)^{\otimes k/2}\rangle, s\ge 2$, $k\in 2\N_0$\\
                       & $\langle\pi_l,\globcolext,(\positionerext)^{\otimes k/2}\mid l\in\N\rangle$, $k\in 2\N_0$\\
                       & $\langle A,\globcolext,(\positionerext)^{\otimes k/2}\rangle$, $A\unlhd\Z_2^\infty$, $k\in 2\N_0$&\\
\end{tabular}
\medskip
\caption{Classification of categories with extra singletons containing \protect\globcolext. Categories in the rows marked by the asterisk $(*)$ are special cases of the group-theoretical categories. See \cite{Gro18} for details on the notation.}
\label{tab.globcol}
\end{table}

%If we want to understand the structure of those categories and describe the corresponding quantum group, it is convenient to reformulate \cite[Lemma 3.8]{Gro18}.
%
%\begin{prop}
%Let $\Cat\subset\Part^{\tcol}\even$ be a category of partitions with extra singletons such that $\globcolext\in\Cat$ and denote by $G$ the corresponding quantum group. Denote by $H$ the quantum group corresponding to $\Cat^{\lincol}$.
%\begin{enumerate}
%\item Suppose $\singleton\otimes\singleton\in\Cat$. Denote $s:=\sum_k v_{ik}=\sum_k v_{kj}$. Then
%\begin{enumerate}
%\item $\Cat=\langle\Cat^{\lincol},(\singleton\otimes\singext)^{\otimes k}\rangle$ for some $k\in\N_0$.
%\item $C(G)$ is a quotient of $C(H)*C^*(\Z_2)$ by the relations $rab=abr$ and $(sr)^k=1$ for $a,b\in\spanlin\{v_{ij}\}$.
%\end{enumerate}
%\item Suppose the number of points of the color $\lincol$ is even for every partition $p\in\Cat$. (This holds in particular if $\singleton\otimes\singleton\not\in\Cat$.) Then
%\begin{enumerate}
%\item $\Cat=\langle\Cat^{\lincol},(\positionerext)^{\otimes k/2}\rangle$ for some $k\in 2\N_0$.
%\item $C(G)$ is a quotient of $C(H)*C^*(\Z_2)$ by the relations $rab=abr$ and $ra_1\cdots ra_k=a_1r\cdots a_kr$ for $a,b,a_1,\dots,a_k\in\spanlin\{v_{ij}\}$.
%\end{enumerate}
%\end{enumerate}
%\end{prop}

Finally, let us mention the case when the category with extra singletons $\Cat$ contains also partitions of odd length. Recall from Subsection \ref{secc.odd} that this can happen only if $\singleton\in\Cat$ or $\singext\in\Cat$. In the latter case the classification is equivalent to the classification of ordinary categories.

\begin{prop}
Let $\Cat$ be a category of partitions with extra singletons such that $\singleton\in\Cat$, $\singext\not\in\Cat$, and $\globcolext\in\Cat$. Denote by $G$ the corresponding quantum group. Then
\begin{enumerate}
\item $\Cat$ is one of the following categories
$$\langle\singleton,\positionerext\rangle^{\tcol},\quad\langle\fourpart,\singleton,\positionerext\rangle^{\tcol},\quad\langle\singleton,\crosspart,\positionerext\rangle^{\tcol},\quad\langle\fourpart,\singleton,\crosspart,\positionerext\rangle^{\tcol}.$$
\item $G=H\times\hat\Z_2$, where $H$ corresponds to the category $\Cat^{\lincol}$, so it equals to $B_N^+$, $S_N^+$, $B_N$, or $S_N$ respectively.
\end{enumerate}
\end{prop}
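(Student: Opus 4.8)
The plan is to deduce the whole statement from Proposition \ref{P.prods2}(2) together with the classification of one-colored categories. That proposition asserts that as soon as the three conditions $\singext\notin\Cat$, $\singleton\otimes\singext\notin\Cat$ and $\positionerext\in\Cat$ hold, we automatically obtain $\Cat=\langle\Cat^{\lincol},\positionerext\rangle^{\tcol}$ and $G=H\times\hat\Z_2$, where $H$ is the quantum group associated with $\Cat^{\lincol}$ by Lemma \ref{L.H}. Since $\singext\notin\Cat$ is assumed, the argument reduces to two short partition computations, after which part (1) becomes a purely one-colored classification question.

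First I would note that $\singleton\in\Cat$ gives $\singleton^*\in\Cat$ by closure under involution, and then establish $\singleton\otimes\singext\notin\Cat$ by contradiction: if $\singleton\otimes\singext$ were in $\Cat$, composing it with $\singleton^*\otimes\idext\in\Cat$ would cap off the $\lincol$-strand (producing only an irrelevant loop) and leave $\singext\in\Cat$, contradicting the hypothesis. Next I would extract $\positionerext$ from $\globcolext$. The relation attached to $\globcolext$ is $v_{ij}v_{kl}r=rv_{ij}v_{kl}$; summing over $l$ and using the relation $\sum_l v_{kl}=1$ that $\singleton$ imposes collapses the second $\lincol$-strand and yields precisely $v_{ij}r=rv_{ij}$, the relation of $\positionerext$. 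Equivalently, at the level of partitions one caps the upper and lower endpoints of one of the two $\lincol$-blocks of $\globcolext$ with $\singleton^*$ and $\singleton$, both of which lie in $\Cat$, so indeed $\positionerext\in\Cat$.

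With the three conditions verified, Proposition \ref{P.prods2}(2) yields $\Cat=\langle\Cat^{\lincol},\positionerext\rangle^{\tcol}$ and $G=H\times\hat\Z_2$, which is part (2) up to identifying $\Cat^{\lincol}$. Because $\singleton\in\Cat$ contains no extra singleton, $\singleton\in\Cat^{\lincol}$, so $\Cat^{\lincol}$ is a one-colored category containing the singleton. By the classification of one-colored partition categories \cite{RW16} the only such categories are $\langle\singleton\rangle$, $\langle\fourpart,\singleton\rangle$, $\langle\singleton,\crosspart\rangle$ and $\langle\fourpart,\singleton,\crosspart\rangle$, whose quantum groups are $B_N^+$, $S_N^+$, $B_N$ and $S_N$; inserting each into $\langle\Cat^{\lincol},\positionerext\rangle^{\tcol}$ produces exactly the four categories in (1), and Lemma \ref{L.H} supplies the corresponding $H$. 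Distinctness of the four outcomes follows from that of the groups $B_N^+,S_N^+,B_N,S_N$. The only input that is not elementary partition bookkeeping is this one-colored sub-classification, so confirming that $\singleton\in\Cat$ really forces one of these four categories (in particular that none of the ``primed'' or hyperoctahedral categories contains the bare singleton) is the main point to get right.
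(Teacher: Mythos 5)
Your proof is correct, but it takes a genuinely different route from the paper's. The paper's own proof is a one-liner on top of previously built machinery: it invokes Lemma \ref{L.singleton} to write $\Cat=\langle\tilde\Cat,\singleton\rangle^{\tcol}$ with $\tilde\Cat$ consisting of even-length partitions, then adds $\singleton$ to each entry of the globally-colorized classification in Table \ref{tab.globcol} (imported from the two-colored classification of \cite{Gro18} via the functor $F$ and Theorem \ref{T.F}), observes that only the four listed categories survive, and gets the quantum group statement from Proposition \ref{P.prods}. You bypass the two-colored classification entirely: your two partition computations are both valid --- if $\singleton\otimes\singext\in\Cat$, capping with $\upsingleton\otimes\idext\in\Cat$ does produce $\singext$ (the capped $\lincol$-singleton becomes a removed middle component), and capping one $\lincol$-strand of $\globcolext$ above with $\singleton$ and below with $\upsingleton$ does yield $\positionerext$ --- so the hypotheses of Proposition \ref{P.prods2}(2) hold and you get $\Cat=\langle\Cat^{\lincol},\positionerext\rangle^{\tcol}$ and $G=H\times\hat\Z_2$ in one stroke, leaving only a one-colored question. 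Your remaining input is also right and you were correct to flag it as the crux: in the classification of \cite{RW16} (the relevant non-hyperoctahedral part going back to \cite{Web13}), the only categories containing the bare singleton are $\langle\singleton\rangle$, $\langle\fourpart,\singleton\rangle$, $\langle\singleton,\crosspart\rangle$, $\langle\fourpart,\singleton,\crosspart\rangle$ ($B_N^+$, $S_N^+$, $B_N$, $S_N$); the hyperoctahedral categories have only even blocks, the primed and $\#$-categories have only even-length partitions, and half-liberation collapses to commutativity once $\singleton$ is present, so no fifth case appears. The trade-off: the paper's argument is shorter given that Table \ref{tab.globcol} has already been established, while yours is more self-contained (needing only the one-colored classification rather than \cite{Gro18}) and more conceptual --- it makes visible \emph{why} the hypotheses force the tensor product, namely that $\singleton\in\Cat$ means degree of reflection one, so $\globcolext$ upgrades to $\positionerext$, in accordance with the collapse predicted by Theorem \ref{T.prods}.
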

\begin{proof}
According to Lemma \ref{L.singleton}, we have that $\Cat=\langle\tilde\Cat,\singleton\rangle^{\tcol}$, where $\tilde\Cat$ is some category of partitions with extra singletons with elements of even length. Thus, we get all possible categories $\Cat$ by adding the singleton $\singleton$ to the categories listed in Table~\ref{tab.globcol}. We find out that there are only the above mentioned four distinct instances. The quantum group picture then follows from Proposition \ref{P.prods}.
\end{proof}

\subsection{Non-crossing extra-singleton categories}
\label{secc.noncross}

In this section, we summarize the classification of categories of partitions with extra singletons induced by the classification of non-crossing categories of two-colored partitions obtained in \cite{TW18}. We mention only the locally colorized categories since the globally colorized ones were handled in the previous subsection.

It could be interesting to compare the results with \cite[Sections 5, 7]{Fre19}.

\begin{prop}
\label{P.ncclass}
Let $\Cat\subset\Part\twocol$ be a category of non-crossing partitions with extra singletons such that $\globcolext\not\in\Cat$. Then $\Cat$ equals to one of the categories in Table~\ref{tab.nc}.
\end{prop}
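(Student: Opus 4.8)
The plan is to reduce everything to the known classification of non-crossing two-colored categories from \cite{TW18} by means of the functor $F$ and the bijection of Theorem \ref{T.F}. Recall from Example \ref{ex.F} that $F(\globcolext)=\idpart[b/w]\otimes\idpart[w/b]$, so the hypothesis $\globcolext\notin\Cat$ says precisely that the image $F(\Cat)$ is \emph{not} globally colorized. Since the globally colorized categories were already treated in the previous subsection, what remains is to enumerate the non-crossing, color-inversion-invariant, locally colorized two-colored categories and transport them back through $F^{-1}$.

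First I would dispose of the odd-length case, since $F$ is defined only on $\Part^{\tcol}\even$. By the lemma of Subsection \ref{secc.odd}, a category containing a partition of odd length must contain $\singext$ or $\singleton$. If $\singext\in\Cat$, then by Proposition \ref{P.prods2} the category is determined by its one-colored restriction $\Cat^{\lincol}$, and the classification reduces to the non-crossing one-colored classification of \cite{RW16}. If $\singleton\in\Cat$, then Lemma \ref{L.singleton} writes $\Cat=\langle\tilde\Cat,\singleton\rangle^{\tcol}$ with $\tilde\Cat$ the even-length part, so it suffices to classify the even-length categories and afterwards adjoin $\singleton$. In the remaining cases $\Cat\subset\Part^{\tcol}\even$ and the functor $F$ applies directly.

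For the even-length categories I would invoke Theorem \ref{T.F}, by which $F$ is a bijection onto the color-inversion-invariant two-colored categories. I would take the list of all non-crossing two-colored categories from \cite{TW18}, discard the globally colorized ones, and retain exactly those invariant under the color inversion $\wcol\leftrightarrow\bcol$. For each retained category I would compute its $F$-preimage using Proposition \ref{P.Fgen}, which reduces the task to replacing each generator by its shortest $F$-preimage (as described in Example \ref{ex.F}); the outcomes are the entries of Table~\ref{tab.nc}. Mutual distinctness of the resulting extra-singleton categories is inherited from the distinctness of the corresponding two-colored categories in \cite{TW18}, together with the injectivity statement $F^{-1}(F(\Cat))=\Cat$ of Theorem \ref{T.F}.

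The hard part will be the bookkeeping in this last step. The \cite{TW18} classification presents its categories through generators whose color patterns are generally not symmetric under $\wcol\leftrightarrow\bcol$, so for each family one must check whether color-inversion invariance already holds and, if not, whether the invariant closure coincides with another entry of the list (thereby collapsing redundant cases). Computing the shortest $F$-preimages of the retained generators and matching them against Table~\ref{tab.nc} is then routine but must be carried out family by family. All the genuine labor sits here; the conceptual content is entirely supplied by Theorem \ref{T.F}.
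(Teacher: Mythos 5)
Your overall route is the paper's: under the given hypothesis, use the bijection of Theorem \ref{T.F} to reduce to the classification of locally colorized non-crossing two-colored categories in \cite[Theorem 7.2]{TW18}, pull the generators back through $F^{-1}$ (Proposition \ref{P.Fgen}, shortest preimages), and obtain the odd-length categories by adjoining $\singleton$ to the even-length ones via Lemma \ref{L.singleton}. Two corrections are needed, the first of which is a genuine, if small, gap. The branch you devote to $\singext\in\Cat$ --- reducing to the one-colored classification of \cite{RW16} through Proposition \ref{P.prods2} --- is vacuous under the hypothesis and, as written, would contaminate the table. Indeed, if $\singext\in\Cat$ then also $\upsingext=\singext^*\in\Cat$, hence $\upsingext\otimes\idpart\otimes\idpart\otimes\singext\in\Cat$; this partition is, up to involution, exactly $\globcolext$, so closure under involution gives $\globcolext\in\Cat$, contradicting the standing assumption $\globcolext\not\in\Cat$. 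This case must therefore simply be discarded --- which is why Table~\ref{tab.nc} contains categories with $\singleton$ but none with $\singext$ --- whereas your plan classifies these categories and asserts that the outcomes ``are the entries of Table~\ref{tab.nc}'', which is false for that branch unless you re-impose the hypothesis at the end. The paper's proof records precisely this observation in one sentence.

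Second, and less seriously: the filtering and invariant-closure bookkeeping you single out as ``the hard part'' does not arise. All categories in the locally colorized list of \cite[Theorem 7.2]{TW18} are already invariant under the color inversion, so $F^{-1}$ applies to every entry directly, and the only remaining labor is computing shortest preimages of the generators, exactly as you describe. Your closing remark on mutual distinctness is fine but goes beyond the proposition, which claims only completeness of the list; with the $\singext$ case excised as above, your argument coincides with the paper's proof.
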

In the table, we denote
$$b_k^{\tcol}:=
\Partition{
\Pblock 0to0.8:1,3,5,9
\Ppoint 0 \Lt:2,4,6,10
\Ptext(7.5,0.3){\dots}
}
\in\Part^{\tcol}(0,2k)$$
the partition consisting of a block of length $k$ and $k$ extra singletons.
\begin{proof}
The classification of locally colorized two-colored categories was obtained in \cite[Theorem 7.2]{TW18}. All of the categories are invariant with respect to the color inversions. According to Theorem \ref{T.F}, we obtain all non-crossing categories of partitions with extra singletons of even length by applying $F^{-1}$ to this classification. Categories containing the extra singleton $\singext$ contain also the partition $\globcolext$, so they are not listed. Categories containing the singleton $\singleton$ are obtained by adding the singleton to the categories with partitions of even length.
\end{proof}

\begin{table}[ht]
\begin{align*}
&\langle\rangle^{\tcol},\quad\langle\fourpart\rangle^{\tcol},\quad\langle\fourpart,\singleton\rangle^{\tcol},\quad\langle\singleton\rangle^{\tcol},\\
&\langle b_k^{\tcol},b_d^{\tcol}\otimes b_d^{\tcol*},\LPartition{\Lt:2,5}{0.8:1,3,4,6},\fourpart\rangle^{\tcol}\qquad k,d\in\N_0\setminus\{1,2\},\;d\mid k\\
&\langle (\singleton\otimes\singext)^{\otimes k},(\singleton\otimes\singext)^{\otimes d}\otimes\idpart\otimes(\upsingext\otimes\upsingleton)^{\otimes d},\fourpart,\singleton\otimes\singleton\rangle\qquad k,d\in\N_0\setminus\{1\},\;d\mid k\\
&\langle (\singleton\otimes\singext)^{\otimes k},(\singleton\otimes\singext)^{\otimes d}\otimes\idpart\otimes(\upsingext\otimes\upsingleton)^{\otimes d},\singleton\otimes\singleton\rangle\qquad k,d\in\N_0,\;d\mid k\\
&\langle (\singleton\otimes\singext)^{\otimes k},(\singleton\otimes\singext)^{\otimes d}\otimes\idpart\otimes(\upsingext\otimes\upsingleton)^{\otimes d},\Pabac\rangle\qquad k,d\in\N_0\setminus\{1\},\;d\mid k\\
&\langle (\singleton\otimes\singext)^{\otimes k},(\singleton\otimes\singext)^{\otimes d}\otimes\idpart\otimes(\upsingext\otimes\upsingleton)^{\otimes d},(\singleton\otimes\singext)^{d/2+1}\otimes\idext\otimes\idpart\otimes(\upsingext\otimes\upsingleton)^{\otimes d/2-1}\rangle\\&\qquad k\in\N_0\setminus\{1\},\;d\in 2\N_0\setminus\{0,2\},\;d\mid k
\end{align*}
\caption{Classification of non-crossing categories with extra singletons not containing \protect\globcolext.}
\label{tab.nc}
\end{table}

\subsection{Categories of pair partitions with extra singletons}
\label{secc.pair}
Pair partitions with extra singletons are those, where all blocks of the color $\lincol$ are of size two. Those categories correspond to quantum groups $G$ such that $O_N\times E\subset G\subset O_N^+*\Z_2$. The classification of such categories can be solved by classifying unitary categories of pair partitions. For this problem, some results are already available. In \cite{MW18,MW19}, Mang and the second author classified all categories of pair partitions with neutral blocks, that is, those categories $\Cat$ such that $\langle\rangle\subset\Cat\subset\langle\crosspart[ww/ww]\rangle$. Complete classification is a work in progress. Some preliminary results are available in \cite{MW19b}.

\section{Correspondence with non-colored linear categories}
\label{sec.linear}

As was already mentioned in Section \ref{sec.prelim}, the concept of a category of partitions can be generalized by introducing a linear structure for partitions. In this section, we summarize the definition of such a structure and recall some results obtained in \cite{GW18}. Then we are going to show, how quantum groups corresponding to pair partitions with extra singletons can be described by linear combinations of non-colored partitions.

\subsection{Linear categories of partitions}
Let us fix a natural number $N\in\N$. We denote by $\Part\nlin(k,l)$ the vector space of formal linear combinations of partitions in $\Part(k,l)$. For partitions $p\in\Part(k,l)$ and $q\in\Part(l,m)$, we define in $\Part\nlin(k,m)$ their composition $qp$ as in Subsection \ref{secc.cat}, but in addition multiplied by a factor $N^{\rl(p,q)}$. Thus, for example
$$
\BigPartition{
\Psingletons 0 to 0.25:1,4
\Psingletons 1 to 0.75:1,4
\Pline (2,0) (3,1)
\Pline (3,0) (2,1)
\Pline (2.75,0.25) (4,0.25)
}
\cdot
\BigPartition{
\Pblock 0 to 0.25:2,3
\Pblock 1 to 0.75:1,2,3
\Psingletons 0 to 0.25:1,4
\Pline (2.5,0.25) (2.5,0.75)
}
=
\BigPartition{
\Pblock 0.5 to 0.75:2,3
\Pblock 1.5 to 1.25:1,2,3
\Psingletons  0.5 to  0.75:1,4
\Pline (2.5,0.75) (2.5,1.25)
\Psingletons -0.5 to -0.25:1,4
\Psingletons  0.5 to  0.25:1,4
\Pline (2,-0.5) (3,0.5)
\Pline (3,-0.5) (2,0.5)
\Pline (2.75,-0.25) (4,-0.25)
}
=N^2
\BigPartition{
\Pblock 0 to 0.25:2,3,4
\Pblock 1 to 0.75:1,2,3
\Psingletons 0 to 0.25:1
\Pline (2.5,0.25) (2.5,0.75)
}.
$$
We extend this operation bilinearly to the whole vector space. The tensor product is defined the same way as in Subsection \ref{secc.cat} and extended bilinearly. The involution is also defined the same way and extended antilinearly.

In this formalism, the mapping $T_\bullet$ assigning each partition $p\in\Part\nlin(k,l)$ a linear map $T_p\colon (\C^N)^{\otimes k}\to(\C^N)^{\otimes l}$ can also be extended linearly and then it is a monoidal unitary functor.

A collection of linear subspaces $\Kat(k,l)\subset\Part\nlin(k,l)$ containing the identity partition $\idpart$ and pair partition $\pairpart$ that is closed under the category operations is called a \emph{linear category of partitions}. Any linear category of partitions is according to the Tannaka--Krein duality assigned a compact quantum group. Moreover, in this generalization the converse also holds: For any compact quantum group $G$ such that $S_N\subset G\subset O_N^+$, its intertwiner spaces are described by some linear category of partitions $\Kat\subset\Part\nlin$.

For more details, see \cite{GW18}.

Of course it possible to introduce such a linear structure for any colored partitions. One just has to fix a number $N\in\N$ for each color representing the dimension of the corresponding representation. Then this number has to appear as a factor in the composition for each loop of this color.

In particular, in the case of categories of partitions with extra singletons, we fix a number $N\in\N$ as a dimension of the representation $v$ corresponding to the color $\lincol$. The extra singletons always correspond to a one-dimensional representation $r$, so we can ignore loops of extra singletons completely. So we can, for example, write the following.
$$
\BigPartition{
\Psingletons 0 to 0.25:4
\Psingletons 1 to 0.75:1
\Ppoint0 \Lt:1
\Ppoint1 \Ut:4
\Pline (2,0) (3,1)
\Pline (3,0) (2,1)
\Pline (2.75,0.25) (4,0.25)
}
\cdot
\BigPartition{
\Pblock 0 to 0.25:2,3
\Pblock 1 to 0.75:1,2,3
\Ppoint0 \Lt:4
\Psingletons 0 to 0.25:1
\Pline (2.5,0.25) (2.5,0.75)
}
=
\BigPartition{
\Pblock 0.5 to 0.75:2,3
\Pblock 1.5 to 1.25:1,2,3
\Psingletons  0.5 to  0.75:1
\Ppoint0.5 \Lt:4
\Pline (2.5,0.75) (2.5,1.25)
\Psingletons -0.5 to -0.25:4
\Psingletons  0.5 to  0.25:1
\Ppoint0.5 \Ut:4
\Ppoint-0.5 \Lt:1
\Pline (2,-0.5) (3,0.5)
\Pline (3,-0.5) (2,0.5)
\Pline (2.75,-0.25) (4,-0.25)
}
=N
\BigPartition{
\Pblock 0 to 0.25:2,3,4
\Pblock 1 to 0.75:1,2,3
\Ppoint0 \Lt:1
\Pline (2.5,0.25) (2.5,0.75)
}
$$
We denote by $\Part^{\tcol}\nlin(w_1,w_2)$ the vector space of formal linear combinations of partitions with extra singletons in $\Part^{\tcol}(w_1,w_2)$ and extend the category operations to those vector spaces.

\subsection{\texorpdfstring{The projections $\protect\disconnecterpart$ and $\pi$}{The projections}}
\label{secc.proj}
A linear combination of partitions $p$ is called \emph{projective} if $p^*=p$ and $p\cdot p=p$ \cite[Definition 2.7]{FW16}. Thanks to the functorial property of the mapping $T_\bullet$, we have that the projectivity of a partition implies that the corresponding linear map $T_p$ is an orthogonal projection.

Let us denote $\xi:=T_{\singleton}=\sum_{i=1}^N e_i\in\C^N$. Given a quantum group $G=(C(G),u)$ corresponding to some linear category of partitions $\Kat$, we have that the fundamental representation $u$ is reducible if and only if $\singleton\otimes\singleton\in\Kat$ \cite[Proposition 2.5(iii)]{RW15}. In this case, the subspace $\spanlin\{\xi\}$ and its orthogonal complement are the only invariant subspaces of $u$.

Note that the partition $\frac{1}{N}\disconnecterpart$ is projective and the corresponding linear map $\frac{1}{N}T_{\disconnecterpart}$ is an orthogonal projection onto $\spanlin\{\xi\}$. Consequently, we can define a projective partition $\pi_{(N)}:=\idpart-\frac{1}{N}\disconnecterpart$ \cite[Definition 5.3]{GW18}, which is assigned the map $T_{\pi_{(N)}}=1-\frac{1}{N}T_{\disconnecterpart}$, which is an orthogonal projection onto the orthogonal complement $\spanlin\{\xi\}^\perp$.

\subsection{Separating linear combinations of partitions}
\label{secc.separated}

Consider the alphabet $\Blphabet=\{\dotcol,\singleton\}$. Put $\pi^{\dotcol}:=\pi_{(N)}$, $\pi^{\singleton}:=\frac{1}{N}\disconnecterpart$ and similarly define the orthogonal projections $P^{\dotcol}:=T_{\pi^{\dotcol}}$, $P^{\singleton}:=T_{\pi^{\singleton}}$. Denote by $\Blphabet^k$ the set of all words over $\Blphabet$ of length $k$. For $w=a_1\cdots a_k\in\Blphabet^k$ we denote $\pi^{\otimes w}:=\pi^{a_1}\otimes\cdots\otimes\pi^{a_k}$ and similarly $P^{\otimes w}:=P^{a_1}\otimes\cdots\otimes P^{a_k}$.

For any $k\in\N_0$, the set of all $\pi^{\otimes w}$, $w\in\Blphabet^k$ forms a complete set of mutually orthogonal projections in the sense that
$$\pi^{\otimes w}\pi^{\otimes w}=\pi^{\otimes w},\quad (\pi^{\otimes w})^*=\pi^{\otimes w},$$
$$\pi^{\otimes w_1}\pi^{\otimes w_2}=0\quad\text{for $w_1\neq w_2$, $|w_1|=|w_2|$},$$
$$\sum_{w\in\Blphabet^k}\pi^{\otimes w}=\idpart^{\otimes k}.$$

Thus, any $p\in\Part\nlin(k,l)$ can be uniquely decomposed as
\begin{equation}
\label{eq.psum}
p=\sum_{\substack{w_1\in\Blphabet^k\\w_2\in\Blphabet^l}}p^{w_1}_{w_2},
\end{equation}
where $p^{w_1}_{w_2}=\pi^{\otimes w_2}p\pi^{\otimes w_1}$.
% For every $I$, it holds that $p_I$ is a linear combination of partititons that have a singleton on the $i$-th place for every $i\in I$. So, we can write $p_I=R^{i-1}(\singleton\otimes q)$ for some $q\in\Part\nlin(k-1)$. Moreover, if we define $p_I'$ to be the part of the linear combination $p_I$ that contain singleton on the $i$-th position if and only if $i\in I$, we can see that $\pi_{I,k}(p_I-p_I')=0$, so $p_I=\pi_{I,k}p_I'$. In particular, if $\pi^{\otimes k}p=p$, then $p=\pi^{\otimes k}p'$, where $p'$ is a linear combination of partitions that do not contain singleton.

We will say that $p\in\Part\nlin(k,l)$ is \emph{separated} if there is $w_1\in\Blphabet^k$ and $w_2\in\Blphabet^l$ such that $p=p^{w_1}_{w_2}$. For example, for any $p\in\Part\nlin(k,l)$, all summands $p^{w_1}_{w_2}$ in the decomposition $p=\sum p^{w_1}_{w_2}$ are separated. 

%\begin{lem}
%Let $\Kat$ be a linear category of partitions such that $\singleton\otimes\singleton\in\Kat$. Then, for any $p\in\Kat(k,l)$, we have $p^{w_1}_{w_2}\in\Kat$ for all $w_1\in\Blphabet^k$ and $w_2\in\Blphabet^l$. Thus, $\Kat$ is generated by separated elements.
%\end{lem}
%\begin{proof}
%Since $\singleton\otimes\singleton\in\Kat$, we have also $\pi^{\otimes w}\in\Kat$ for any any word $w$. Hence also $p^{w_1}_{w_2}=\pi^{\otimes w_2}p\pi^{\otimes w_1}\in\Kat$ for any $p\in\Kat(k,l)$.
%\end{proof}
%
\begin{defn}
Let $\Kat$ be a linear category of partitions such that $\singleton\otimes\singleton\in\Kat$. Take $k,l\in\N_0$, $w_1\in\Blphabet^k$ and $w_2\in\Blphabet^l$. We denote $\Kat(w_1,w_2):=\{p\in\Kat(k,l)\mid p=p^{w_1}_{w_2}\}$.
\end{defn}

\begin{lem}
\label{L.Kdecomp}
Let $\Kat$ be a linear category of partitions such that $\singleton\otimes\singleton\in\Kat$. For any $k,l\in\N_0$ we have the vector space decomposition
$$\Kat(k,l)=\bigoplus_{\substack{w_1\in\Blphabet^k\\w_2\in\Blphabet^l}}\Kat(w_1,w_2).$$
Thus, $\Kat$ is generated by separated elements.
\end{lem}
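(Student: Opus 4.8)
The plan is to reduce the statement to the single fact that both projective partitions $\pi^{\singleton}=\frac1N\disconnecterpart$ and $\pi^{\dotcol}=\pi_{(N)}$ already belong to $\Kat$. Granting this, closure under the tensor product gives $\pi^{\otimes w}\in\Kat(k,k)$ for every $w\in\Blphabet^k$, and then for any $p\in\Kat(k,l)$ each separated piece $p^{w_1}_{w_2}=\pi^{\otimes w_2}\,p\,\pi^{\otimes w_1}$ is a composition of three elements of $\Kat$ (namely $\pi^{\otimes w_1}\in\Kat(k,k)$, $p\in\Kat(k,l)$ and $\pi^{\otimes w_2}\in\Kat(l,l)$), hence lies in $\Kat(k,l)$; being fixed by the flanking projections it lies in $\Kat(w_1,w_2)$. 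Since $p=\sum_{w_1,w_2}p^{w_1}_{w_2}$ by \eqref{eq.psum}, this immediately yields $\Kat(k,l)=\sum_{w_1,w_2}\Kat(w_1,w_2)$ and, once directness is checked, also the final assertion that $\Kat$ is generated by separated elements.

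So the only real content is to show $\pi^{\singleton},\pi^{\dotcol}\in\Kat$, and this is where the hypothesis $\singleton\otimes\singleton\in\Kat$ enters. I would argue that $\disconnecterpart\in\Kat$ as follows: the partition $\disconnecterpart\in\Part(1,1)$ is obtained from $\singleton\otimes\singleton\in\Part(0,2)$ by rotating one of its two lower points up into the upper row. A linear category of partitions is closed under such rotations (they are realized by compositions with the pair partitions $\pairpart$ and $\uppairpart$, which lie in every category), and rotation leaves the block structure unchanged; since the two points of $\singleton\otimes\singleton$ form two distinct singleton blocks, after rotation they become one upper and one lower singleton lying in different blocks, that is, exactly $\disconnecterpart$. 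Hence $\pi^{\singleton}=\frac1N\disconnecterpart\in\Kat$, and because $\idpart\in\Kat$ and $\Kat(1,1)$ is a vector space, also $\pi^{\dotcol}=\idpart-\frac1N\disconnecterpart\in\Kat$.

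It remains to see that the sum is direct, which I would deduce from the uniqueness of the decomposition \eqref{eq.psum} in the ambient space $\Part\nlin(k,l)$. This uniqueness is exactly the completeness and mutual orthogonality of the family $\{\pi^{\otimes w}\}_{w\in\Blphabet^k}$ recorded just before \eqref{eq.psum}. Concretely, if $\sum_{w_1,w_2}q^{w_1}_{w_2}=0$ with $q^{w_1}_{w_2}\in\Kat(w_1,w_2)$, then applying the idempotent map $x\mapsto\pi^{\otimes w_2'}\,x\,\pi^{\otimes w_1'}$ and using $\pi^{\otimes w}\pi^{\otimes w'}=\delta_{w,w'}\pi^{\otimes w}$ isolates the single summand $q^{w_1'}_{w_2'}=0$; thus the subspaces $\Kat(w_1,w_2)$ are independent and the sum is direct.

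I expect the rotation argument giving $\disconnecterpart\in\Kat$ to be the only step demanding attention; the spanning and the directness are then purely formal consequences of the projection calculus already established in Subsections \ref{secc.proj} and \ref{secc.separated}.
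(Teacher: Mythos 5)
Your proof is correct and takes essentially the same route as the paper's, which likewise deduces $\pi^{\otimes w}\in\Kat$ from $\singleton\otimes\singleton\in\Kat$ and concludes that every summand $p^{w_1}_{w_2}=\pi^{\otimes w_2}\,p\,\pi^{\otimes w_1}$ of the decomposition \eqref{eq.psum} lies in $\Kat$. You merely spell out what the paper leaves implicit --- the rotation argument giving $\disconnecterpart\in\Kat$ (loop-free, so no factor of $N$ appears), the linear combination yielding $\pi^{\dotcol}$, and the directness check via orthogonality of the projections --- and all of these details are sound.
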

\begin{proof}
Since $\singleton\otimes\singleton\in\Kat$, we have also $\pi^{\otimes w}\in\Kat$ for any any word $w$. Hence also $p^{w_1}_{w_2}=\pi^{\otimes w_2}p\pi^{\otimes w_1}\in\Kat$ for any $p\in\Kat(k,l)$. Therefore all the summand in the decomposition \eqref{eq.psum} of any $p\in\Kat$ are contained in $\Kat$.
\end{proof}

\subsection{Basis for separated partitions}
\label{secc.dotted}
Take a partition $p\in\Part(k,l)$. Define a word $w_1\in\Blphabet^k$ in such a way that on the $i$-th position there is the letter $\singleton$ if $p$ has a singleton on the $i$-th position in the upper row. Otherwise, we put the letter $\dotcol$. Similarly we define the word $w_2\in\Blphabet^l$ corresponding to the lower row of $p$. Then we define $\dot p:=p^{w_1}_{w_2}$. We depict the linear combination $\dot p$ pictorially using the graphical representation of $p$ and replacing all the non-singleton blocks by dotted lines. The linear combinations $\dot p$ for any $p\in\Part$ are called \emph{dotted partitions}.

For example, taking $p:=\Pabcdcb$, we denote
$$\dot p:=\PDabcdcb:=(\pi^{\dotcol}\otimes\pi^{\dotcol}\otimes\pi^{\singleton})\Pabcdcb(\pi^{\singleton}\otimes\pi^{\dotcol}\otimes\pi^{\dotcol})=\Pabcdcb-\frac{1}{N}\Pabcdce-\frac{1}{N}\Pabcdeb+\frac{1}{N^2}\Pabcdef.$$

\begin{lem}
The set $\{\dot p\mid p\in\Part(k,l)\}$ forms a basis of the vector space $\Part\nlin(k,l)$ for any $k,l\in\N_0$.
\end{lem}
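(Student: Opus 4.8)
The plan is to show that the transition from the standard basis $\{p\mid p\in\Part(k,l)\}$ of $\Part\nlin(k,l)$ to the family $\{\dot p\}$ is given by a unitriangular matrix, where triangularity is measured by the number of blocks (equivalently, by refinement). Since $\Part(k,l)$ is finite and $\{p\}$ is by definition a basis of the free vector space $\Part\nlin(k,l)$, this immediately yields the claim. Concretely, writing $w_1,w_2$ for the words associated to $p$, I would expand $\dot p=\pi^{\otimes w_2}\,p\,\pi^{\otimes w_1}$ using $\pi^{\dotcol}=\idpart-\frac1N\disconnecterpart$ at the non-singleton positions and $\pi^{\singleton}=\frac1N\disconnecterpart$ at the singleton positions. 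Multilinearity turns $\dot p$ into a sum over all choices, indexed by a subset $S$ of the non-singleton positions at which the summand $\disconnecterpart$ is selected (the singleton positions are forced to contribute $\disconnecterpart$).

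The heart of the argument is the combinatorial effect of inserting $\disconnecterpart$ at a single position. Since $\disconnecterpart$ has its upper and lower point disconnected, stacking it at a point $i$ of $p$ makes the new boundary point $i$ a singleton and turns the old point $i$ into an interior point still attached to the remainder of its block. I would distinguish two cases. If $i$ was already a singleton of $p$, the interior point becomes an isolated closed loop, contributing a factor $N$ that cancels the $\frac1N$ from $\pi^{\singleton}$, and the resulting partition is unchanged. If $i$ lay in a block of size at least two, no loop is produced and the effect on the underlying set partition is exactly to detach $i$ as a new singleton. Performing these detachments at all positions of $S$ (the operations commute, acting on distinct legs) yields the partition $q_S$ in which every point of $S$ is a singleton and all other points keep their $p$-blocks. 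A short count shows that any block of $p$ meeting $S$ splits into strictly more blocks, so $b(q_S)>b(p)$ whenever $S\neq\emptyset$, while $q_\emptyset=p$ with coefficient exactly $1$ (the forced loops at the singleton positions contribute the overall factor $1$).

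Consequently $\dot p=p+\sum_{q}c_q\,q$, where every $q$ occurring with $q\neq p$ satisfies $b(q)>b(p)$. Ordering $\Part(k,l)$ by increasing number of blocks, the matrix $(M_{qp})$ defined by $\dot p=\sum_q M_{qp}\,q$ is triangular with all diagonal entries equal to $1$, hence invertible; therefore $\{\dot p\}$ is again a basis. I expect the main obstacle to be the careful bookkeeping of loops in the case analysis above, in particular checking that the singleton positions contribute an overall factor of exactly $1$ and that no configuration with $S\neq\emptyset$ can reproduce $p$, rather than the final linear-algebra step, which is routine once the unitriangular shape is established. Note that the exact coefficients $c_q$ of the strictly finer terms, which may involve powers of $N$ and possible cancellations, are irrelevant for this argument.
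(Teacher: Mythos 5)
Your proof is correct and takes essentially the same approach as the paper, whose one-line argument likewise orders partitions by the number of blocks and observes that the coefficient matrix of the $\dot p$ with respect to the standard basis of partitions is triangular with non-zero diagonal entries. Your expansion of $\pi^{\otimes w_2}\,p\,\pi^{\otimes w_1}$ just supplies the bookkeeping the paper leaves implicit; the only slight imprecision --- a block of $p$ wholly contained in $S$ \emph{does} produce a loop and hence a factor of $N$, contrary to the single-insertion case analysis --- affects only the coefficients $c_q$ of strictly finer terms, which, as you correctly note, are irrelevant for the unitriangularity.
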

\begin{proof}
If we order the dotted partition with respect to the number of blocks, then the matrix of coefficients of $\dot p$ with respect to the basis of standard partitions is triangular with non-zero entries on the diagonal.
\end{proof}

Take a linear category of partitions $\Kat\subset\Part\nlin$ containing $\singleton\otimes\singleton$. From Lemma \ref{L.Kdecomp} it follows that this structure can be alternatively described as follows. As a set of objects, take all words over the alphabet $\Blphabet$. As morphism spaces take the vector spaces $\Kat(w_1,w_2)$. Those spaces can be conveniently described using the basis of dotted partitions, which are separated.

Note that from this point of view, the linear categories $\Kat$ with $\singleton\otimes\singleton$ look similar to the categories with extra singletons. However, the composition rule for dotted partitions is in general quite different from the composition rule for categories with extra singletons. Nevertheless, in Subsection \ref{secc.U} we are going to describe a category isomorphism between those structures.

\subsection{Relations corresponding to separated partitions}
The meaning of separated partitions can be seen when looking on the relations they imply. Take a linear combination $p\in\Part\nlin(k,l)$ and consider words $w_1\in\Blphabet^k$, $w_2\in\Blphabet^l$. Recall the definition of the projections $P^{\lincol}$ and $P^{\dotcol}$ from Subsection \ref{secc.proj}. Then the relation corresponding to $p^{w_1}_{w_2}$ is
\begin{align*}
P^{\otimes w_2}T_pu^{\otimes w_1}&=P^{\otimes w_2}T_pP^{\otimes w_1}u^{\otimes k}=T_{p^{w_1}_{w_2}}u^{\otimes k}\\&=u^{\otimes l}T_{p^{w_1}_{w_2}}=u^{\otimes l}P^{\otimes w_2}T_pP^{\otimes w_1}=u^{\otimes w_2}T_pP^{\otimes w_1},
\end{align*}
where $u^{\dotcol}:=P^{\dotcol}uP^{\dotcol}$ and $u^{\singleton}:=P^{\singleton}uP^{\singleton}$. Expressing this in terms of matrix elements, we can write $[u^{\dotcol}]_{ij}=u_{ij}-\frac{1}{N}r$ and $[u^{\singleton}]_{ij}=\frac{1}{N}r$. If $p$ consists only of singletons and ``through pairs'' (pair blocks with one upper and one lower point), then we have $p\pi^{\otimes w_1}=\pi^{\otimes w_2}p$, so $T_pP^{\otimes w_1}=P^{\otimes w_2}T_p$. Thus, the relation corresponding to $p^{w_1}_{w_2}$ is of the form
$$T_pu^{\otimes w_1}=u^{\otimes w_2}T_p,$$
so it has exactly the same form as the relation for $p$ except that we have to exchange the copies of $u$ by the corresponding subrepresentations.

%\begin{ex}
%As an example, take $p=\PPabcbad$. Its C*-algebraical relation is
%$$ru_{ij}u_{kl}=u_{ij}u_{kl}r.$$
%Since $\PPabac\in\langle\PPabcbad\rangle\nlin$, this relation actually implies the relation corresponding to $\PPabac$, which is $ru_{ij}=u_{ij}r$.
%
%Now, let us study the relation corresponding to 
%$$p_{\{1\},\{3\}}=\frac{1}{N^2}(\disconnecterpart\otimes\pi_{(N)}\otimes\pi_{(N)})\PPabcbad(\pi_{(N)}\otimes\pi_{(N)}\otimes\disconnecterpart)=\upsingleton\otimes\pi_{(N)}\otimes\pi_{(N)}\otimes\singleton$$
%As was mentioned above, this relation can be obtained just by modifying the relation for $\PPabcbad$ as
%$$rv_{ij}'v_{kl}'=v_{ij}'v_{kl}'r,$$
%where $v'_{ij}=[P_{(N)}uP_{(N)}]_{ij}=u_{ij}-r$. We are going to prove in Proposition \ref{P.abcbad} that this relation is weaker than the relation for $\PPabcbad$.
%
%Note also, that this relation implies
%$$rab=abr$$
%for any $a,b\in\spanlin\{v_{ij}'\}_{i,j}=\spanlin\{v_{ij}\}_{i,j}$, where $v=V_{(N,\pm)}uV_{(N,\pm)}^*$. In particular, we have
%$$rv_{ij}v_{kl}=v_{ij}v_{kl}r.$$
%\end{ex}

\begin{ex}
\label{ex.halflib}
As an example, consider the half-liberating partition $p:=\halflibpart$. Its C*-algebraic relation is
\begin{equation}
\label{eq.halflib}
u_{i_1j_1}u_{i_2j_2}u_{i_3j_3}=u_{i_3j_3}u_{i_2j_2}u_{u_1j_1},
\end{equation}
which can be also written as $abc=cba$, where $a,b,c\in\spanlin\{u_{ij}\}$.

Consider the category $\Kat:=\langle\halflibpart,\singleton\otimes\singleton\rangle\nlin$. Since $\singleton\otimes\singleton\in\Kat$, we can separate the generator $p=\halflibpart$ and write
$$\Kat=\langle\singleton\otimes\singleton,p^{w_1}_{w_2}\mid w_1,w_2\in\Blphabet^3\rangle\nlin.$$

In fact, $\PDabcabd$, $\PDabcadc$, and $\PDabcdbc$ are rotations of each other and all the $p^{w_1}_{w_2}$ except for those three and $\PDabcabc$ are generated by $\singleton\otimes\singleton$. So, we can actually write
$$\Kat=\langle \PDabcabc,\PDabcabd,\singleton\otimes\singleton\rangle\nlin.$$
%The decomposition $p=\sum_{I,J}p_{I,J}$ can be written as
%$$\halflibpart=\Dhalflibpart+\frac{1}{N}(\PPDabcabd+\PPDabcadc+\PPDabcdbc)+\frac{1}{N^2}(\PPDabcade+\PPDabcdbe+\PPDabcdec)+\frac{1}{N^3}\PPabcdef.$$
%
%Note that all the dotted partitions in the first parentheses are a rotation of $\PPDabcadc$ and all the partitions in the second parentheses as well as the very last partition are contained in $\langle\singleton\otimes\singleton\rangle\nlin$. Thus, we can write
%$$\langle\halflibpart,\singleton\otimes\singleton\rangle\nlin=\langle\Dhalflibpart,\PPDabcadc,\singleton\otimes\singleton\rangle\nlin.$$

Now, according to what was written above, we get the relation for $\PDabcabc$ simply as Relation \eqref{eq.halflib}, where we replace all $u_{ij}$ by $[P^{\dotcol}uP^{\dotcol}]_{ij}=u_{ij}-\frac{1}{N}r$. So, we can write it also as $abc=cba$, where $a,b,c\in\spanlin\left\{u_{ij}-\frac{1}{N}r\right\}$. 

The relation for $\PDabcabd$ can be written as Relation \eqref{eq.halflib}, where the $u_{i_1j_1}$ (i.e. the first variable in the polynomial on the left hand side and the third variable in the polynomial on the right hand side) should be replaced by $\frac{1}{N}r$ and all the other $u_{ij}$ are replaced by $u_{ij}-\frac{1}{N}r$. So, we can write $rbc=cbr$, where $b,c\in\left\{u_{ij}-\frac{1}{N}r\right\}$.

Note that the dotted partition $\PDabcabd$ actually generates $\Pabcabc$ (the proof is essentially same as in \cite[Lemma 3.8]{Web13}), so we have $\langle \PDabcabd\rangle\nlin=\langle\halflibpart,\singleton\otimes\singleton\rangle\nlin$.
\end{ex}

\subsection{Correspondence with categories with extra singletons}
\label{secc.U}
Consider a linear category of partitions $\Kat$ such that $\singleton\otimes\singleton\in\Kat$, so it corresponds to a quantum group $G=(C(G),u)$, $S_N\subset G\subset B_N^{+\#}$, where the fundamental representation is reducible having a one-dimensional invariant subspace $\spanlin\{\xi\}$. In \cite{GW18}, an orthogonal matrix $U_{(N,\pm)}\in M_N(\C)$ was defined such that $U_{(N,\pm)}uU_{(N,\pm)}^*$ has a block structure $v\oplus r$ separating the two subrepresentations of $u$.

In \cite{GW18}, it was studied, which quantum group is generated by the $(N-1)$-dimensional subrepresentation of $u$. This can be done in two ways: either we consider the projection $P^{\dotcol}\colon\C^N\to\C^N$ onto $\spanlin\{\xi\}^{\perp}$ and study $u^{\dotcol}=P^{\dotcol}uP^{\dotcol}$ or we first apply the map $U_{(N,\pm)}$ and then project onto the subspace generated by the first $N-1$ basis vectors and study $v=V_{(N,\pm)}uV_{(N,\pm)}^*$, where $V_{(N,\pm)}$ is the coisometry formed by the first $N-1$ rows of $U_{(N,\pm)}$. To summarize, we have the following maps.

$$
\begin{tikzpicture}[baseline=(m-1-2.west)]
  \matrix (m) [matrix of math nodes,row sep=3em,column sep=4em,minimum width=2em]
  {
     \C^N & \spanlin\{\xi\}^\perp \\
     \C^N & \C^{N-1} \\};
  \path[-stealth]
    (m-1-1) edge node [left] {$U_{(N,\pm)}$} node [right] {$\sim$} (m-2-1)
    (m-1-1.east|-m-1-2) edge [->>] (m-1-2)
    (m-1-1) edge node [below] {$V_{(N,\pm)}$} (m-2-2)
    (m-2-1) edge [->>] (m-2-2)
    (m-1-2) edge node [right] {$V_{(N,\pm)}$} node [left] {$\sim$} (m-2-2);
\end{tikzpicture}
\qquad
\begin{tikzpicture}[baseline=(m-1-1.east)]
  \matrix (m) [matrix of math nodes,row sep=3em,column sep=4em,minimum width=2em]
  {
     u & u^{\dotcol} \\
     v\oplus r & v \\};
  \path[-stealth]
    (m-1-1) edge (m-2-1)
    (m-1-1) edge (m-1-2.west|-m-1-1)
    (m-1-1) edge (m-2-2)
    (m-2-1.east|-m-2-2) edge (m-2-2)
    (m-1-2) edge (m-2-2);
\end{tikzpicture}
$$

In \cite{GW18}, representation categories of $u$, $u^{\dotcol}$ and $v$ were studied using linear categories of partitions. Introducing categories with extra singletons allows us to study the representation category of $v\oplus r$. In the remainder of this article, we will define and study the category isomorphism $\U_{(N,\pm)}$ that completes the following commutative diagram.
\tikzset{
  subseteq/.style={
    draw=none,
    edge node={node [sloped, allow upside down, auto=false]{$\subseteq$}}},
  supseteq/.style={
    draw=none,
    edge node={node [sloped, allow upside down, auto=false]{$\supseteq$}}},
}
$$
\begin{tikzpicture}
  \matrix (m) [matrix of math nodes,row sep=3em,column sep=4em,minimum width=2em]
  {
     \Part\nlin & \PP_{(N)}\Part\nlin \\
     \Part\nnnlin^{\tcol} & \Part\nnnlin \\};
  \path[-stealth]
    (m-1-1) edge node [left] {$\U_{(N,\pm)}$} node [right] {$\sim$} (m-2-1)
    (m-1-1) edge [->>,bend left] node [above] {$\PP_{(N)}$} (m-1-2)
    (m-1-1.east|-m-1-2) edge [supseteq] (m-1-2)
    (m-1-1) edge [->>] node [below] {$\V_{(N,\pm)}$} (m-2-2)
    (m-2-1) edge [supseteq] (m-2-2)
    (m-1-2) edge node [right] {$\V_{(N,\pm)}$} node [left] {$\sim$} (m-2-2);
\end{tikzpicture}
$$

For the definition of $U_{(N,\pm)}$, $V_{(N,\pm)}$, $\V_{(N,\pm)}$, $\PP_{(N)}$, see \cite{GW18}.

\begin{defn}
\label{D.U}
Consider the alphabets $\Alphabet=\{\lincol,\tcol\}$ and $\Blphabet=\{\dotcol,\singleton\}$. Consider $\Part\nlin$ as a category with objects formed by words over the alphabet $\Blphabet$ as was described in Subsection \ref{secc.dotted}. We define a functor $\U_{(N,\pm)}\colon\Part\nlin\to\Part^{\tcol}\nnnlin$ as follows. On objects, $\U_{(N,\pm)}$ acts as a word isomorphism mapping $\dotcol\mapsto\lincol$ and $\singleton\mapsto\tcol$. For morphisms, we describe the action on the basis of dotted partitions. Taking a dotted partition $\dot p\in\Part\nlin(w_1,w_2)$, $\U_{(N,\pm)}$ acts blockwise. All singletons $\singleton$ are mapped to $\sqrt{N}\singext$. Any dotted block is mapped using the map $\V_{(N,\pm)}$ (see \cite[Definition 4.10]{GW18}), so
$$\U_{(N,\pm)}\dot b_l:=\V_{(N,\pm)}\dot b_l=\V_{(N,\pm)}b_l.$$
\end{defn}

\begin{prop}
\label{P.U}
$\U_{(N,\pm)}$ is indeed a monoidal unitary functor. That is, we have
\begin{enumerate}
\item $\U_{(N,\pm)}p\otimes\U_{(N,\pm)}q=\U_{(N,\pm)}(p\otimes q)$,
\item $\U_{(N,\pm)}q\cdot\U_{(N,\pm)}p=\U_{(N,\pm)}(qp)$ if $p$ and $q$ are composable,
\item $(\U_{(N,\pm)}p)^*=\U_{(N,\pm)}p^*$
\end{enumerate}
for any $p,q\in\Part\nlin$.
\end{prop}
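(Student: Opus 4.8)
The plan is to check the three identities on a basis and thereby reduce the whole statement to the behaviour of the two kinds of building blocks out of which $\U_{(N,\pm)}$ is assembled. Since the tensor product and composition are bilinear and the involution is antilinear, and since by Lemma~\ref{L.Kdecomp} and the discussion in Subsection~\ref{secc.dotted} the dotted partitions form a basis of each morphism space of $\Part\nlin$ regarded as a category over the alphabet $\Blphabet$, it is enough to verify (1)--(3) when $p$ and $q$ are dotted partitions. On these, Definition~\ref{D.U} prescribes $\U_{(N,\pm)}$ blockwise: a singleton $\singleton$ goes to $\sqrt N\,\singext$ and a dotted block $\dot b_l$ goes to $\V_{(N,\pm)}b_l$. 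The statement will follow once I show that this blockwise rule is compatible with the three operations, using the fact from \cite{GW18} that $\V_{(N,\pm)}\colon\Part\nlin\to\Part\nnnlin$ is itself a monoidal unitary functor.

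Properties (1) and (3) are the routine ones and I expect each to take only a few lines. Juxtaposing two dotted partitions creates no interaction between their blocks, and dotting commutes with juxtaposition, so that $\dot p\otimes\dot q=\dot{(p\otimes q)}$ as separated elements; applying the blockwise rule to $p\otimes q$ therefore splits exactly as $\U_{(N,\pm)}p\otimes\U_{(N,\pm)}q$, which gives (1). For (3), reversing the two rows sends singletons to singletons and dotted blocks to their reversals; as the singleton rule is symmetric between the rows and $\V_{(N,\pm)}$ respects the involution, the identity $(\U_{(N,\pm)}p)^*=\U_{(N,\pm)}p^*$ drops out.

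The composition identity (2) is the main obstacle, and the difficulty is that the source and target categories weight loops differently. In $\Part\nlin$ every loop arising in a composition is weighted by $N$, whereas in $\Part^{\tcol}\nnnlin$ only loops of color $\lincol$ are weighted, by $N-1$ since $\lincol$ encodes the $(N-1)$-dimensional representation $v$, while loops of extra singletons are ignored entirely. Here I would compose $\dot q$ with $\dot p$ and trace what happens along the contracted middle row. Because composition of separated elements vanishes unless the middle words over $\Blphabet$ agree, every contraction pairs like with like: a dotted block with a dotted block, or a singleton with a singleton. A contraction of dotted blocks produces a dotted block governed by the composition rule of $\V_{(N,\pm)}$ from \cite{GW18}; there the $N$-weight of the corresponding loop in $\Part\nlin$ is turned into the $(N-1)$-weight on the target precisely by the $\frac1N\disconnecterpart$ terms hidden in $\pi^{\dotcol}=\idpart-\frac1N\disconnecterpart$. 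A contraction of two singletons is a genuine loop in $\Part\nlin$ and so carries a factor $N$; on the target it becomes an ignored extra-singleton loop, but the two prefactors $\sqrt N$ attached to the singletons by the rule of Definition~\ref{D.U} multiply to exactly $N$, so the two sides still carry the same weight.

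The hard part will be precisely this loop bookkeeping: I must check that every loop, whether it closes up dotted blocks or pairs off singletons, is weighted identically on both sides, so that the $\sqrt N$ normalization of the singleton rule together with the $\frac1N$ factors inside $\pi^{\singleton}$ and $\pi^{\dotcol}$ reconcile the $N$-loop convention of $\Part\nlin$ with the $(N-1)$-loop-and-ignored-singleton convention of $\Part^{\tcol}\nnnlin$, and that the surviving block structure of $\U_{(N,\pm)}(qp)$ matches that of $\U_{(N,\pm)}q\cdot\U_{(N,\pm)}p$. Once the composition of a single dotted block with a single dotted block and of a singleton with a singleton has been settled, the general case follows, since an arbitrary dotted partition is obtained from these two building blocks by tensor products and compositions, for which compatibility is by then already available.
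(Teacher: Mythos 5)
Your proposal is correct and follows essentially the same route as the paper's proof: reduce to the basis of dotted partitions, observe that (1) and (3) are immediate from the blockwise definition, and settle (2) by checking singleton--singleton contractions directly (your $\sqrt N\cdot\sqrt N=N$ bookkeeping is exactly the paper's ``easy to see directly'' step) and dotted-block contractions via the composition behaviour of $\V_{(N,\pm)}$ from \cite[Proposition 5.16]{GW18}. The only difference is that you spell out the loop-weight reconciliation between the $N$-linear source and the $(N-1)$-linear target, which the paper leaves implicit.
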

\begin{proof}
Since $\U_{(N,\pm)}$ acts blockwise, it is clear that it behaves well with respect to the tensor product and involution. It is enough to show the functorial property (2) for dotted partitions. Here, we have to check that it behaves well in case of singletons and dotted blocks. For singletons, it is easy to see it directly. For dotted blocks it follows from \cite[Proposition 5.16]{GW18}.
\end{proof}

\begin{thm}
\label{T.U}
It holds that
$$T_{\U_{(N,\pm)}p}=U^{\otimes l}_{(N,\pm)}T_pU^{*\,\otimes k}_{(N,\pm)}$$
for any $p\in\Part\nlin(w_1,w_2)$, $w_1\in\Blphabet^k$, $w_2\in\Blphabet^l$. Thus, considering a linear category of partitions $\Kat\subset\Part\nlin$ containing $\singleton\otimes\singleton$ and the corresponding quantum group $G$, it holds that the linear category with extra singletons $\U_{(N,\pm)}\Kat$ corresponds to the quantum group $U_{(N,\pm)}GU_{(N,\pm)}^*$.
\end{thm}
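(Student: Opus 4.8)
The plan is to view both sides of the claimed identity as monoidal unitary functors on $\Part\nlin$ and then check that they agree on a generating set, invoking linearity and functoriality to propagate the identity. On the left, $p\mapsto T_{\U_{(N,\pm)}p}$ is the composite of the functor $\U_{(N,\pm)}$, which is monoidal and unitary by Proposition~\ref{P.U}, with the functor $T_\bullet$, which is monoidal and unitary on linear categories as recalled in Subsection~\ref{secc.Tp}. Here $T_{\U_{(N,\pm)}p}$ is read in the full space $\C^N=\C^{N-1}\oplus\C$, where the color $\lincol$ occupies the first $N-1$ coordinates and an extra singleton selects the last coordinate; this matches the block decomposition $v\oplus r$ produced by $U_{(N,\pm)}$. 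On the right, $p\mapsto U^{\otimes l}_{(N,\pm)}T_pU^{*\,\otimes k}_{(N,\pm)}$ is again a monoidal unitary functor: compatibility with tensor products is immediate, with involution it follows from $T_{p^*}=T_p^*$, and with composition it uses $T_{qp}=T_qT_p$ together with the unitarity $U^{*\,\otimes l}_{(N,\pm)}U^{\otimes l}_{(N,\pm)}=1$ to cancel the inner copies of $U_{(N,\pm)}$.

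Since both functors respect tensor products, compositions, involutions, and are linear in $p$, it suffices to verify the identity on generators. Using the basis of dotted partitions from Subsection~\ref{secc.dotted}, every dotted partition is built by the category operations from singletons $\singleton$, $\upsingleton$ and dotted blocks $\dot b_l$ (rotations being compositions with the dotted pair $\dot b_2$), so these span each morphism space and form a suitable generating set. For a singleton, $\U_{(N,\pm)}$ sends $\singleton$ to $\sqrt N\,\singext$, whose associated vector in the full picture is $\sqrt N\,e_N$; on the other side $U_{(N,\pm)}T_{\singleton}=U_{(N,\pm)}\xi=\sqrt N\,e_N$, since $\xi/\sqrt N$ spans the one-dimensional invariant subspace that $U_{(N,\pm)}$ rotates onto the last coordinate. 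Hence the two functors agree on singletons.

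For a dotted block $\dot b_l$ I would invoke the description of $\V_{(N,\pm)}$ from \cite{GW18}. Writing $T_{\dot b_l}=P^{\dotcol\,\otimes l}T_{b_l}$ and using that $U_{(N,\pm)}P^{\dotcol}U^*_{(N,\pm)}$ is the projection onto the first $N-1$ coordinates, conjugation by $U_{(N,\pm)}$ lands the block entirely in the $v$-part, which is exactly the reduced image prescribed by $\V_{(N,\pm)}$. Combined with \cite[Proposition~5.16]{GW18}, this yields $T_{\U_{(N,\pm)}\dot b_l}=U^{\otimes l}_{(N,\pm)}T_{\dot b_l}U^{*\,\otimes k}_{(N,\pm)}$, completing the check on generators; functoriality then gives the identity for all $p\in\Part\nlin$. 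I expect the main obstacle to be precisely this step: matching the $(N-1)$-dimensional target of $\V_{(N,\pm)}$ with the $N$-dimensional conjugation by $U_{(N,\pm)}$, which forces careful bookkeeping of the coisometry $V_{(N,\pm)}$ formed by the first $N-1$ rows of $U_{(N,\pm)}$ and of the fact that dotted blocks are supported on $\spanlin\{\xi\}^\perp$.

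Finally, for the quantum group statement I would read off from the operator identity that the morphism spaces of $\U_{(N,\pm)}\Kat$ are exactly $U^{\otimes\bullet}_{(N,\pm)}\bigl(\spanlin\{T_p\mid p\in\Kat\}\bigr)U^{*\,\otimes\bullet}_{(N,\pm)}$. These are precisely the intertwiner spaces between tensor powers of the fundamental representation $U_{(N,\pm)}uU^*_{(N,\pm)}$, that is, of the quantum group $U_{(N,\pm)}GU^*_{(N,\pm)}$. By Woronowicz's Tannaka--Krein duality, this quantum group is the one corresponding to $\U_{(N,\pm)}\Kat$, which is the assertion.
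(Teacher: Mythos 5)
Your proof is correct and follows essentially the same route as the paper's: reduce via the functoriality of $\U_{(N,\pm)}$ (Proposition \ref{P.U}), of $T_\bullet$, and of conjugation by $U_{(N,\pm)}^{\otimes\bullet}$ to the generating cases, verify the singleton by the identical computation $T_{\U_{(N,\pm)}\singleton}=\sqrt{N}\,T_{\singext}=\sqrt{N}\,e_N=U_{(N,\pm)}\xi=U_{(N,\pm)}T_{\singleton}$, and defer the dotted blocks to \cite{GW18}. The only cosmetic difference is that the paper cites \cite[Theorem 4.13]{GW18} directly for the dotted blocks, whereas you unwind that step via the projection $U_{(N,\pm)}P^{\dotcol}U_{(N,\pm)}^*$ and the coisometry $V_{(N,\pm)}$, and you spell out the Tannaka--Krein conclusion a bit more explicitly than the paper does.
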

\begin{proof}
Compare with \cite[Theorem 4.13]{GW18} and its proof. Again, it is enough to show the equality for block partitions. To be more precise, in this case we check it for the singleton $p=\singleton$ and for the dotted blocks $\dot b_l\in\Part\nlin(\emptyset,\dotcol^l)$.

For the singleton, we have
$$T_{\U_{(N,\pm)}\singleton}=\sqrt{N}\,T_{\singext}=\sqrt{N}\,e_N=U_{(N,\pm)}\xi=U_{(N,\pm)}T_{\singleton}.$$
For the dotted blocks it follows directly from \cite[Theorem 4.13]{GW18}.
\end{proof}

\subsection{Categories with dotted pairings}
In this subsection we present the main application of Theorem \ref{T.U}. Note that the dotted pair block is mapped by $\U_{(N,\pm)}$ to an ordinary pair block. Thus a category $\Kat$ with $\singleton\otimes\singleton\in\Kat$, where all blocks are of size at most two, i.e. $\langle\singleton\otimes\singleton\rangle\nlin\subset\Kat\subset\langle\singleton,\crosspart\rangle\nlin$ is mapped to a category $\langle\rangle^{\tcol}\subset\U_{(N,\pm)}\Kat\subset\langle\singext,\crosspart\rangle^{\tcol}$.

In this case, we can easily see, what is the inverse of $\U_{(N,\pm)}$:  we simply map all pair blocks to dotted pair blocks and all extra singletons to ordinary singletons. Since we have a partial classification of the extra singleton pair categories in the easy case, this induces a large class of new examples of non-easy linear categories of partitions corresponding to quantum groups $B_N\subset G\subset B_N^{\# +}$. We can take the classification result \cite{MW18,MW19,MW19b}, apply the functor $F$ from Section \ref{sec.classification}, obtain categories with extra singletons and apply the functor $\U$ to them.

As an example, we are going to apply the functor $\U$ to the categories $\langle\globcolext\rangle^{\tcol}$ and $\langle(\positionerext)^{\otimes k}\rangle^{\tcol}$ corresponding to the quantum groups $O_N^+\ttimes\hat\Z_2$ and $O_N^+\times_{2k}\hat\Z_2$. By this, we obtain non-easy categories corresponding to new non-easy quantum groups that are isomorphic to the original ones.

%\begin{ex}
%As an example, consider the extra-singleton category $\langle\globcolext\rangle^{\tcol}$. This corresponds to a quantum group $G=(C(G),v\oplus r)$, which is a quantum subgroup of $O_N^+*\Z_2$ given by the relation $rab=abr$ for all $a,b\in\spanlin\{v_{ij}\}$. Applying the inverse of the functor $\U_{(N+1,\pm)}$, we get the linear category of partitions $\langle\PDabcdcb\rangle\nnlin$, which corresponds to a quantum group $\tilde G=(C(\tilde G),u)$, which is a quantum subgroup of $B_{N+1}^{\# +}$ given by the relation $sab=abs$, where $s=\sum_{i=1}^{N+1}u_{ij}$ and $a,b\in\spanlin\{u_{ij}-\frac{1}{N}r\}$.
%\end{ex}

\begin{prop}
The following are non-easy and mutually distinct linear categories of partitions
$$\left\langle\Pabcdcb-\frac{1}{N}\Pabcdce-\frac{1}{N}\Pabcdeb+\frac{1}{N^2}\Pabcdef\right\rangle\nlin,$$
$$\left\langle\left(\Pabcb-\frac{1}{N}\Pabcd\right)^{\otimes k}\right\rangle\nlin,\qquad k\in\N\setminus\{1\}.$$
\end{prop}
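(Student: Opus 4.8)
The plan is to transport the whole statement across the category isomorphism $\U_{(N,\pm)}$ and reduce it to facts already proved for categories with extra singletons. Write $\Kat_1$ for the first displayed category and $\Kat_2^{(k)}$ for the second. The first step is to recognise the two generators as $\U_{(N,\pm)}$-preimages. By the computation of the dotted partition in Subsection~\ref{secc.dotted}, the generator of $\Kat_1$ is exactly $\PDabcdcb$, the dotted version of $p=\Pabcdcb$ (two dotted through-pairs and two singletons); since $\U_{(N,\pm)}$ acts blockwise, sending each dotted pair block to an ordinary pair block and each singleton $\singleton$ to $\sqrt N\,\singext$ (Definition~\ref{D.U}), its image is a nonzero scalar multiple of $\globcolext$. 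Likewise the generator of $\Kat_2^{(k)}$ is $(\dot q)^{\otimes k}$ with $\dot q=\Pabcb-\frac1N\Pabcd$ the dotted through-pair, and $\U_{(N,\pm)}(\dot q)^{\otimes k}$ is a nonzero multiple of $(\positionerext)^{\otimes k}$.

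Because $\U_{(N,\pm)}$ is a monoidal unitary isomorphism of categories (Proposition~\ref{P.U}), it carries a generated category to the category generated by the images, exactly as $F$ does in Proposition~\ref{P.Fgen}. Hence $\U_{(N,\pm)}\Kat_1=\langle\globcolext\rangle^{\tcol}$ and $\U_{(N,\pm)}\Kat_2^{(k)}=\langle(\positionerext)^{\otimes k}\rangle^{\tcol}$. Mutual distinctness of the linear categories is then equivalent, by bijectivity of $\U_{(N,\pm)}$, to distinctness of these images. But these are precisely the extra-singleton categories $\langle\rangle^{\tcol}_0$ and $\langle\rangle^{\tcol}_{2k}$ of Proposition~\ref{P.prodtab} (taking $\Cat=\langle\rangle$), which are shown there to be mutually distinct: $\langle\globcolext\rangle^{\tcol}$ is globally colorized whereas none of the $\langle(\positionerext)^{\otimes k}\rangle^{\tcol}$ is, so the first category differs from the whole second family, and distinct values $k\ge 2$ are separated by \cite[Lemma~3.7]{Gro18} (their quantum groups have different degrees of reflection).

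The main obstacle is non-easiness. A linear category $\Kat$ is easy precisely when $\Kat=\spanlin(\Kat\cap\Part)$, so I would establish the strict inclusion $\spanlin(\Kat_1\cap\Part)\subsetneq\Kat_1$ (and similarly for $\Kat_2^{(k)}$) by showing that the generator is not a linear combination of the ordinary partitions lying in the category. The route is to determine $\Kat_1\cap\Part$ through the correspondence: by Theorem~\ref{T.U} a genuine partition $q$ belongs to $\Kat_1$ iff $\U_{(N,\pm)}q$ lies in the linear span of $\langle\globcolext\rangle^{\tcol}$. Expanding $q$ in the dotted basis and applying $\U_{(N,\pm)}$ blockwise writes $\U_{(N,\pm)}q$ as a linear combination of genuine extra-singleton partitions, and—using that the associated $T$-maps are linearly independent for $N$ large—membership forces each of these partitions to lie in $\langle\globcolext\rangle^{\tcol}$. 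Carrying out this bookkeeping should identify $\Kat_1\cap\Part$ with $\langle\singleton\otimes\singleton\rangle$, the category of $B_N^{\#+}$; its linear span corresponds under $\U_{(N,\pm)}$ to $\langle\rangle^{\tcol}$, which is strictly contained in $\langle\globcolext\rangle^{\tcol}=\U_{(N,\pm)}\Kat_1$ by Proposition~\ref{P.prodtab}, so $\spanlin(\Kat_1\cap\Part)\subsetneq\Kat_1$ and the generator $\PDabcdcb$ does not lie in $\spanlin(\Kat_1\cap\Part)$.

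I expect this last bookkeeping to be the hardest part, and it is exactly where the hypothesis $k\ge 2$ is used: for $k=1$ the image is $\langle\positionerext\rangle^{\tcol}$, which corresponds to the plain tensor product $O_N^+\times\hat\Z_2$, and its $\U_{(N,\pm)}$-preimage $\langle\dot q\rangle\nlin$ is in fact easy (one checks $\Pabcb,\Pabcd\in\langle\dot q\rangle\nlin$). A less computational alternative for non-easiness would be to observe that an easy $\tilde G$ with $B_N\subset\tilde G\subset B_N^{\#+}$ must occur in the classification \cite{RW16} and to exclude each candidate by an intertwiner-dimension count, the point being that the quantum groups obtained here are the genuinely new products of Theorem~\ref{T.prods} and are absent from that list.
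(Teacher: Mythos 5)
Your first two steps---recognizing the generators as $\U_{(N,\pm)}$-preimages of $\globcolext$ and $(\positionerext)^{\otimes k}$ and transporting distinctness through the category isomorphism---are exactly the paper's route. However, one of your two distinctness justifications is factually wrong: the categories $\langle(\positionerext)^{\otimes k}\rangle^{\tcol}$ \emph{do} contain $\globcolext$, i.e.\ they are globally colorized. This is precisely the quantum group inclusion $G\times_{2k}\hat\Z_2\subset G\ttimes\hat\Z_2$ of Theorem \ref{T.prods}, visible on the category side in Lemma \ref{L.Fk}, where $F(\langle\Cat,(\positionerext)^{\otimes k}\rangle^{\tcol})=\langle\Psi^{-1}(\Cat)_0,\idpart[w/b]^{\otimes k}\rangle\supset\Psi^{-1}(\Cat)_0=F(\langle\Cat,\globcolext\rangle^{\tcol})$. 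Luckily your parenthetical degree-of-reflection argument via \cite[Lemma 3.7]{Gro18} separates \emph{all} the categories at once (degree $0$ for the first one, $2k$ for the others), which is also how Proposition \ref{P.prodtab} argues, so distinctness survives once you delete the ``globally colorized'' sentence.

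The genuine gap is non-easiness, which you yourself flag as the hardest part and then leave as an expectation: the ``bookkeeping'' that is supposed to identify $\Kat_1\cap\Part$ with $\langle\singleton\otimes\singleton\rangle$ is never carried out, and the fallback via the classification \cite{RW16} is likewise only a sketch. Moreover, your appeal to linear independence of the maps $T_p$ ``for $N$ large'' is both unnecessary and harmful: the linear categories in the statement are subspaces of the \emph{free} vector space on partitions, so membership of a linear combination in $\spanlin\Cat$ for $\Cat\subset\Part$ means literally that every partition occurring with nonzero coefficient lies in $\Cat$; routing this through $T$-maps would needlessly restrict the conclusion to large $N$. This freeness observation is also the key you are missing, and it makes the computation of $\Kat_1\cap\Part$ superfluous: any easy category whose span contains the generator $\PDabcdcb$ (resp.\ $(\PDabcb)^{\otimes k}$, whose expansion has leading term $\Pabcb^{\otimes k}$ with coefficient $1$) must contain the partition $\Pabcdcb$ (resp.\ $\Pabcb^{\otimes k}$) itself, hence by capping contains $\langle\Pabcb\rangle\nlin=\langle\PDabcb\rangle\nlin$; this is therefore the smallest easy category containing any of the given categories. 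Since $\langle\PDabcb\rangle\nlin$ corresponds under the isomorphism to $\langle\positionerext\rangle^{\tcol}$, and the inclusions $\langle\globcolext\rangle^{\tcol}\subsetneq\langle\positionerext\rangle^{\tcol}$ and $\langle(\positionerext)^{\otimes k}\rangle^{\tcol}\subsetneq\langle\positionerext\rangle^{\tcol}$ for $k\geq 2$ are strict, none of the given categories equals the smallest easy category containing it---the paper's one-line finish. Note this also explains your correct side remark about $k=1$: there the inclusion becomes an equality and the category is easy.
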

\begin{proof}
The first generator actually equals to $\U_{(N+1,\pm)}^{-1}\globcolext=\PDabcdcb$, whereas the second one to $\U_{(N+1,\pm)}^{-1}(\positionerext)^{\otimes k}=(\PDabcb)^{\otimes k}$. Strict inclusions for the categories with extra singletons induce corresponding inclusions in our case. In particular, this proves the mutual inequality of the categories and their non-easiness. For the latter, note that the smallest easy category containing any of those above must be $\langle\Pabcb\rangle\nlin=\langle\PDabcb\rangle\nlin$.
\end{proof}

It is easy to write down the relations corresponding to those categories. Recall the relations for the partitions with extra singletons
\begin{eqnarray*}
\globcolext\kern13pt\qquad&\leftrightarrow&\qquad rv_{ij}v_{kl}=v_{ij}v_{kl}r,\\
(\positionerext)^{\otimes k}\qquad&\leftrightarrow&\qquad rv_{i_1j_1}rv_{i_2j_2}\cdots rv_{i_kj_k}=v_{i_1j_1}rv_{i_2j_2}r\cdots v_{i_kj_k}r,
\end{eqnarray*}
where $v\oplus r$ is the fundamental representation of the quantum group. The quantum groups corresponding to the above mentioned categories, i.e.\ defined by the dotted partitions $\PDabcdcb$, resp. $(\PDabcb)^{\otimes k}$ are quantum subgroups  of $B_N^{\#+}=(C(B_N^{\#+}),u)$ defined by precisely the same relations if we interpret $r$ as the one-dimensional subrepresentation $r:=\sum_ku_{ik}=\sum_k u_{kj}$ and $v:=u^{\dotcol}=u-\frac{1}{N}r$.

\bibliographystyle{halpha}
%\nocite{*}
\bibliography{mybase}

\begin{thebibliography}{MRW17}
\expandafter\ifx\csname url\endcsname\relax
  \def\url#1{\texttt{#1}}\fi
\expandafter\ifx\csname doi\endcsname\relax
  \def\doi#1{\burlalt{doi:#1}{http://dx.doi.org/#1}}\fi
\expandafter\ifx\csname urlprefix\endcsname\relax\def\urlprefix{URL }\fi
\expandafter\ifx\csname href\endcsname\relax
  \def\href#1#2{#2}\fi
\expandafter\ifx\csname burlalt\endcsname\relax
  \def\burlalt#1#2{\href{#2}{#1}}\fi

\bibitem[Ban97]{Ban97}
Teodor Banica.
\newblock Le groupe quantique compact libre {$U(n)$}.
\newblock {\em Communications in Mathematical Physics}, 190(1):143--172, 1997.
\newblock \doi{10.1007/s002200050237}.

\bibitem[Bic04]{Bic04}
Julien Bichon.
\newblock Free wreath product by the quantum permutation group.
\newblock {\em Algebras and Representation Theory}, 7(4):343--362, 2004.
\newblock \doi{10.1023/B:ALGE.0000042148.97035.ca}.

\bibitem[BS09]{BS09}
Teodor Banica and Roland Speicher.
\newblock Liberation of orthogonal {Lie} groups.
\newblock {\em Advances in Mathematics}, 222(4):1461--1501, 2009.
\newblock \doi{10.1016/j.aim.2009.06.009}.

\bibitem[BV05]{BV05}
Saad Baaj and Stefaan Vaes.
\newblock Double crossed products of locally compact quantum groups.
\newblock {\em Journal of the Institute of Mathematics of Jussieu},
  4(1):135--173, 2005.
\newblock \doi{10.1017/S1474748005000034}.

\bibitem[CW16]{CW16}
Guillaume Cébron and Moritz Weber.
\newblock Quantum groups based on spatial partitions.
\newblock 2016, arXiv:\burlalt{1609.02321}{http://arxiv.org/abs/1609.02321}.

\bibitem[Fre17]{Fre17}
Amaury Freslon.
\newblock On the partition approach to schur-weyl duality and free quantum
  groups.
\newblock {\em Transformation Groups}, 22(3):707--751, 2017.
\newblock \doi{10.1007/s00031-016-9410-9}.

\bibitem[Fre19]{Fre19}
Amaury Freslon.
\newblock On two-coloured noncrossing partition quantum groups.
\newblock {\em Transactions of the Americal Mathematical Society},
  372:4471--4508, 2019.
\newblock \doi{10.1090/tran/7846}.

\bibitem[FS18]{FS18}
Amaury Freslon and Adam Skalski.
\newblock Wreath products of finite groups by quantum groups.
\newblock {\em Journal of Noncommutative Geometry}, 12(1):29--68, 2018.
\newblock \doi{10.4171/JNCG/270}.

\bibitem[FW14]{FW16}
Amaury Freslon and Moritz Weber.
\newblock On the representation theory of partition (easy) quantum groups.
\newblock {\em Journal für die reine und angewandte Mathematik},
  2016(720):155--197, 2014.
\newblock \doi{10.1515/crelle-2014-0049}.

\bibitem[Gro18]{Gro18}
Daniel Gromada.
\newblock Classification of globally colorized categories of partitions.
\newblock {\em Infinite Dimensional Analysis, Quantum Probability and Related
  Topics}, 21(04):1850029, 2018.
\newblock \doi{10.1142/S0219025718500297}.

\bibitem[GW20]{GW18}
Daniel Gromada and Moritz Weber.
\newblock Intertwiner spaces of quantum group subrepresentations.
\newblock {\em Communications in Mathematical Physics}, 376:81--115, 2020.
\newblock \doi{10.1007/s00220-019-03463-y}.

\bibitem[Maj90]{Maj87}
Shahn Majid.
\newblock Physics for algebraists: Non-commutative and non-cocommutative {H}opf
  algebras by a bicrossproduct construction.
\newblock {\em Journal of Algebra}, 130(1):17--64, 1990.
\newblock \doi{10.1016/0021-8693(90)90099-A}.

\bibitem[Maj91]{Maj89}
Shahn Majid.
\newblock Hopf-von {N}eumann algebra bicrossproducts, {K}ac algebra
  bicrossproducts, and the classical {Y}ang--{B}axter equations.
\newblock {\em Journal of Functional Analysis}, 95(2):291--319, 1991.
\newblock \doi{10.1016/0022-1236(91)90031-Y}.

\bibitem[MRW17]{MRW17}
Ralf Meyer, Sutanu Roy, and Stanis{\l}aw~Lech Woronowicz.
\newblock Semidirect products of {C}*-quantum groups: Multiplicative unitaries
  approach.
\newblock {\em Communications in Mathematical Physics}, 351(1):249--282, 2017.
\newblock \doi{10.1007/s00220-016-2727-3}.

\bibitem[MW19a]{MW19}
Alexander Mang and Moritz Weber.
\newblock Categories of two-colored pair partitions, part {II}: Categories
  indexed by semigroups.
\newblock 2019, arXiv:\burlalt{1901.03266}{http://arxiv.org/abs/1901.03266}.

\bibitem[MW19b]{MW19b}
Alexander Mang and Moritz Weber.
\newblock Non-hyperoctahedral categories of two-colored partitions, part {I}:
  New categories.
\newblock 2019, arXiv:\burlalt{1907.11417}{http://arxiv.org/abs/1907.11417}.

\bibitem[MW20]{MW18}
Alexander Mang and Moritz Weber.
\newblock Categories of two-colored pair partitions, part~{I}: Categories
  indexed by cyclic groups.
\newblock {\em The Ramanujan Journal}, 53:181--208, 2020.
\newblock \doi{10.1007/s11139-019-00149-w}.

\bibitem[NT13]{NT13}
Sergey Neshveyev and Lars Tuset.
\newblock {\em Compact Quantum Groups and Their Representation Categories}.
\newblock Société Mathématique de France, Paris, 2013.

\bibitem[RW15]{RW15}
Sven Raum and Moritz Weber.
\newblock Easy quantum groups and quantum subgroups of a semi-direct product
  quantum group.
\newblock {\em Journal of Noncommutative Geometry}, 9(4):1261--1293, 2015.
\newblock \doi{10.4171/JNCG/223}.

\bibitem[RW16]{RW16}
Sven Raum and Moritz Weber.
\newblock The full classification of orthogonal easy quantum groups.
\newblock {\em Communications in Mathematical Physics}, 341(3):751--779, 2016.
\newblock \doi{10.1007/s00220-015-2537-z}.

\bibitem[Tim08]{Tim08}
Thomas Timmermann.
\newblock {\em An Invitation to Quantum Groups and Duality}.
\newblock European Mathematical Society, Zürich, 2008.

\bibitem[TW17]{TW17}
Pierre Tarrago and Moritz Weber.
\newblock Unitary easy quantum groups: The free case and the group case.
\newblock {\em International Mathematics Research Notices},
  2017(18):5710--5750, 2017.
\newblock \doi{10.1093/imrn/rnw185}.

\bibitem[TW18]{TW18}
Pierre Tarrago and Moritz Weber.
\newblock The classification of tensor categories of two-colored noncrossing
  partitions.
\newblock {\em Journal of Combinatorial Theory, Series A}, 154:464--506, 2018.
\newblock \doi{10.1016/j.jcta.2017.09.003}.

\bibitem[VV03]{VV03}
Stefaan Vaes and Leonid Vainerman.
\newblock Extensions of locally compact quantum groups and the bicrossed
  product construction.
\newblock {\em Advances in Mathematics}, 175(1):1--101, 2003.
\newblock \doi{10.1016/S0001-8708(02)00040-3}.

\bibitem[Wan95a]{Wan95free}
Shuzhou Wang.
\newblock Free products of compact quantum groups.
\newblock {\em Communications in Mathematical Physics}, 167(3):671--692, 1995.
\newblock \doi{10.1007/BF02101540}.

\bibitem[Wan95b]{Wan95tensor}
Shuzhou Wang.
\newblock Tensor products and crossed products of compact quantum groups.
\newblock {\em Proceedings of the London Mathematical Society},
  s3-71(3):695--720, 1995.
\newblock \doi{10.1112/plms/s3-71.3.695}.

\bibitem[Wan98]{Wan98}
Shuzhou Wang.
\newblock Quantum symmetry groups of finite spaces.
\newblock {\em Communications in Mathematical Physics}, 195(1):195--211, 1998.
\newblock \doi{10.1007/s002200050385}.

\bibitem[Web13]{Web13}
Moritz Weber.
\newblock On the classification of easy quantum groups.
\newblock {\em Advances in Mathematics}, 245:500--533, 2013.
\newblock \doi{10.1016/j.aim.2013.06.019}.

\bibitem[Wor87]{Wor87}
Stanisław~L. Woronowicz.
\newblock Compact matrix pseudogroups.
\newblock {\em Communications in Mathematical Physics}, 111(4):613--665, 1987.
\newblock \doi{10.1007/BF01219077}.

\bibitem[Wor88]{Wor88}
Stanisław~L. Woronowicz.
\newblock {Tannaka-Krein} duality for compact matrix pseudogroups. {Twisted
  $SU(N)$} groups.
\newblock {\em Inventiones mathematicae}, 93(1):35--76, 1988.
\newblock \doi{10.1007/BF01393687}.

\end{thebibliography}

\end{document}